\definecolor{shadecolor}{gray}{0.9}
\newcommand{\E}{\mathbb{E}\,}
\newcommand{\F}{{\mathcal F}}
\renewcommand{\tilde}{\widetilde}
\renewcommand{\hat}{\widehat}
\newcommand{\eps}{\varepsilon}
\DeclareMathOperator*{\argmin}{\arg\min}
\DeclareMathOperator*{\argmax}{\arg\max}
\numberwithin{equation}{section}
\newtheorem{theorem}{Theorem}[section]
\newtheorem{lemma}{Lemma}[section]
\newtheorem{corollary}{Corollary}[section]
\theoremstyle{remark}
\newtheorem{example}{Example}[section]
\newtheorem{remark}{Remark}[section]
\begin{document}

\begin{frontmatter}

\title{Quantized Minimax Estimation\\ over Sobolev Ellipsoids} 
\runtitle{}
\runauthor{}

\begin{aug}
\vskip10pt
\author{\fnms{Yuancheng}
  \snm{Zhu${}^{*}$}\ead[label=e1]{zhuyuanc@wharton.upenn.edu}}
\, \and \,
\author{\fnms{John}
  \snm{Lafferty${}^{\dag}$}\ead[label=e2]{lafferty@galton.uchicago.edu}}
\vskip10pt
\address{
\begin{tabular}{cc}
${}^*$Department of Statistics & ${}^\dag$Department of Statistics\\
The Wharton School & Department of Computer Science\\
University of Pennsylvania & University of Chicago
\end{tabular}
\\[10pt]
\today\\[5pt]
\vskip10pt
}
\end{aug}

\begin{abstract}
{We formulate the notion of
minimax estimation under storage or communication constraints,
and prove an extension to Pinsker's theorem
for nonparametric estimation over Sobolev ellipsoids.
Placing limits on the number of bits
used to encode any estimator, we give tight lower and upper bounds
on the excess risk due to quantization in terms of
the number of bits, the signal size, and the noise level.  This
establishes the Pareto optimal tradeoff between storage and
risk under quantization constraints for Sobolev spaces.  
Our results and proof techniques combine elements of rate distortion theory
and minimax analysis. The proposed quantized estimation scheme, which shows achievability of the
lower bounds, is adaptive in the usual statistical sense, achieving
the optimal quantized minimax rate without knowledge of the smoothness
parameter of the Sobolev space.  It is also adaptive
in a computational sense, as it constructs the code only after
observing the data, to dynamically allocate more codewords to blocks
where the estimated signal size is large. Simulations are included
that illustrate the effect of quantization on statistical risk.}
{nonparametric estimation, minimax bounds, rate distortion theory, constrained estimation, Sobolev ellipsoid}
\end{abstract}
 
\vskip20pt 
\end{frontmatter}

\maketitle

\vskip10pt
\section{Introduction}

In this paper we introduce a minimax framework for nonparametric
estimation under storage constraints.  In the classical statistical
setting, the minimax risk  
for estimating a function $f$ from a function class $\F$
using a sample of size $n$ places no constraints on the estimator
${\hat f_n}$, other than requiring it 
to be a measurable function of the data.  However, if
the estimator is to be constructed with restrictions
on the computational resources used,
it is of interest to understand how the error can degrade.
Letting $C(\hat f_n) \leq B_n$ indicate that the
computational resources $C(\hat f_n)$ used to construct $\hat f_n$ are
required to fall within a budget $B_n$, the constrained minimax risk is
$$ R_n(\F,B_n) = \inf_{\hat f_n: C(\hat f_n)\leq B_n} \sup_{f\in \F}
R(\hat f_n, f).$$ Minimax lower bounds on the
risk as a function of the computational budget thus determine a
feasible region for computation constrained estimation, and a Pareto
optimal tradeoff for risk versus computation as $B_n$ varies.  

Several recent papers have presented results on tradeoffs between statistical
risk and computational resources, measured in terms of either running time of the algorithm,
number of floating point operations, or number of bits used to store or construct
the estimators \cite{chandrasekaran2013computational, bruer2014time,
lucic2015tradeoffs}. However, the existing work quantifies the
tradeoff by analyzing the statistical and computational performance
of specific procedures, rather than by establishing lower bounds and
a Pareto optimal tradeoff. In this paper we treat
the case where the complexity $C(\hat f_n)$ is measured by the
storage or space used by the procedure and sharply characterize the optimal tradeoff.
Specifically, we limit the
number of bits used to represent the estimator $\hat f_n$.  We focus
on the setting of nonparametric regression under standard smoothness
assumptions, and study how the excess risk depends on the storage
budget $B_n$.

We view the study of quantized estimation as a theoretical problem of
fundamental interest. But quantization may arise naturally in future
applications of large scale statistical estimation.  For instance,
when data are collected and analyzed on board a remote satellite, the
estimated values may need to be sent back to Earth for further
analysis. To limit communication costs, the estimates can be
quantized, and it becomes important to understand what, in principle,
is lost in terms of statistical risk through quantization.  A related
scenario is a cloud computing environment where data are processed for
many different statistical estimation problems, with the estimates
then stored for future analysis. To limit the storage costs, which
could dominate the compute costs in many scenarios, it is of interest
to quantize the estimates, and the quantization-risk tradeoff again
becomes an important concern.  Estimates
are always quantized to some degree in practice.  But to impose energy
constraints on computation, future processors may limit precision in
arithmetic computations more significantly \cite{galal2011energy}; the cost
of limited precision in terms of statistical risk must then be
quantified.
A related problem is to distribute the
estimation over many parallel processors, and to then limit the
communication costs of the submodels to the central host.  
We focus on the centralized setting in the current paper,
but an extension to the distributed case may be possible
with the techniques that we introduce here.

We study risk-storage tradeoffs in the
normal means model of nonparametric estimation assuming the target
function lies in a Sobolev space.  The problem is intimately related
to classical rate distortion theory \cite{gallager1968information}, and our
results rely on a marriage of minimax theory and rate distortion
ideas.  We thus build on and refine the connection between
function estimation and lossy source coding that was elucidated in
David Donoho's 1997 Wald Lectures \cite{donoho1997wald}.

We work in the Gaussian white noise model
\begin{equation}
dX(t)=f(t)dt+\varepsilon dW(t), \quad 0\leq t\leq 1,\label{eqn:wnm}
\end{equation}
where $W$ is a standard Wiener process on $[0,1]$, 
$\varepsilon$ is the standard deviation of the noise,
and $f$ lies in the periodic Sobolev space $\tilde W(m,c)$ of order $m$ and radius
$c$. (We discuss the nonperiodic Sobolev space $W(m,c)$ in
Section~\ref{sec:achievability}.) 
The white noise model is a centerpiece of nonparametric estimation.
It is asymptotically equivalent to nonparametric regression \cite{brown1996asymptotic}
and density estimation \cite{nussbaum1996asymptotic}, and simplifies
some of the mathematical analysis in our framework.
In this classical setting,
the minimax risk of estimation 
\begin{equation*}
R_\varepsilon(m,c)=\inf_{\hat f_\varepsilon}\sup_{f\in\tilde W(m,c)}\E\|f-\hat f_\varepsilon\|_2^2
\end{equation*}
is well known to satisfy
\begin{equation}
\lim_{\varepsilon\to 0}\varepsilon^{-\frac{4m}{2m+1}}R_\varepsilon(m,c)=
\left(\frac{c^2(2m+1)}{\pi^{2m}}\right)^{\frac{1}{2m+1}}\left(\frac{m}{m+1}\right)^{\frac{2m}{2m+1}}
\triangleq \mathsf P_{m,c}\label{pinsker}
\end{equation}
where $\mathsf P_{m,c}$ is Pinsker's constant \cite{nussbaum1999minimax}.  The
constrained minimax risk for quantized estimation becomes
\begin{equation*}\label{eqn_minimaxdef}
R_\varepsilon(m,c,B_\eps)=\inf_{\hat f_\varepsilon, C(\hat
f_\varepsilon)\leq B_\eps}\sup_{f\in\tilde W(m,c)}\E\|f-\hat f_\varepsilon\|_2^2
\end{equation*}
where $\hat f_\eps$ is a \textit{quantized estimator} that is
required to use storage $C(\hat f_\eps)$ no greater than $B_\eps$ bits in
total. Our main result identifies three separate quantization
regimes. 

\begin{itemize}
\vskip10pt
\item In
the \textit{over-sufficient regime}, the number of bits
is very large, satisfying $B_\eps \gg \eps^{-\frac{2}{2m+1}}$ and the classical minimax rate
of convergence $R_\eps \asymp \eps^{\frac{4m}{2m+1}}$ is obtained.
Moreover, the optimal constant is the Pinsker constant ${\mathsf
P}_{m,c}$.

\vskip10pt
\item In the \textit{sufficient regime}, the number
of bits scales as $B_\eps \asymp \eps^{-\frac{2}{2m+1}}$.  This level
of quantization is just sufficient to preserve the classical minimax
rate of convergence, and thus in this regime
$R_\varepsilon(m,c,B_\eps) \asymp \eps^{\frac{4m}{2m+1}}$.
However, the optimal constant degrades to a new constant ${\mathsf
P}_{m,c} + {\mathsf Q}_{m,c,d}$, where ${\mathsf Q}_{m,c,d}$ is 
characterized in terms of the solution of a certain variational
problem, depending on $d=\lim_{\eps\rightarrow 0}
B_{\eps} \eps^{\frac{2}{2m+1}}$.

\vskip10pt
\item In the \textit{insufficient regime}, the number
of bits scales as $B_\eps \ll \eps^{-\frac{2}{2m+1}}$, with however
$B_\eps \to \infty$.  Under this scaling
the number of bits is insufficient to preserve the unquantized
minimax rate of convergence, and the quantization error dominates
the estimation error.   We show that the quantized
minimax risk in this case satisfies
\begin{equation*}
\lim_{\varepsilon\to 0} B_\eps^{2m} R_\varepsilon(m,c,B_\eps) = 
\frac{c^2 m^{2m}}{\pi^{2m}}.
\end{equation*}
Thus, in the insufficient regime the
quantized minimax rate of convergence is $B_\eps ^{-2m}$,
with optimal constant as shown above.
\end{itemize}

By using an upper bound for the family of constants ${\mathsf Q}_{m,c,d}$,
the three regimes can be combined together to view 
the risk in terms of a decomposition into estimation error and quantization error.
Specifically, we can write
\begin{equation*}
R_\varepsilon(m,c,B_\varepsilon)\;\; \approx \underbrace{\mathsf P_{m,c}\,\varepsilon^{\frac{4m}{2m+1}}}_{\mbox{\footnotesize estimation error}}
+ \underbrace{\frac{c^2m^{2m}}{\pi^{2m}}B_\varepsilon^{-2m}}_{\mbox{\footnotesize quantization error}}.
\end{equation*}
When $B_\varepsilon\gg\varepsilon^{-\frac{2}{2m+1}}$, the estimation error dominates the quantization error, 
and the usual minimax rate and constant are obtained.
In the insufficient case $B_\varepsilon\ll\varepsilon^{-\frac{2}{2m+1}}$,
only a slower rate of convergence is achievable. When $B_\varepsilon$ and $\varepsilon^{-\frac{2}{2m+1}}$ are comparable, 
the estimation error and quantization error are on the same order.
The threshold $\varepsilon^{-\frac{2}{2m+1}}$ should not be surprising,
given that in classical unquantized estimation the minimax rate
of convergence is achieved by estimating the first $\varepsilon^{-\frac{2}{2m+1}}$ Fourier
coefficients and simply setting the remaining coefficients to zero.
This corresponds to selecting a smoothing bandwidth that scales as
$h\asymp n^{-\frac{1}{2m+1}}$ with the sample size $n$.


At a high level, our proof strategy integrates elements of minimax
theory and source coding theory.  In minimax analysis one computes
lower bounds by thinking in Bayesian terms to look for least-favorable
priors.  In source coding analysis one constructs worst case
distributions by setting up an optimization problem based on mutual
information.  Our quantized minimax analysis requires that these
approaches be carefully combined to balance the estimation and
quantization errors. To show achievability of the lower bounds we
establish, we likewise need to construct an estimator and coding
scheme together.  Our approach is to quantize the blockwise
James-Stein estimator, which achieves the classical Pinsker bound.
However, our quantization scheme differs from the approach taken in
classical rate distortion theory, where the generation of the codebook
is determined once the source distribution is known.  In our setting,
we require the allocation of bits to be adaptive to the data, using
more bits for blocks that have larger signal size.  We therefore
design a quantized estimation procedure that adaptively distributes
the communication budget across the blocks.  Assuming only 
a lower bound $m_0$ on the smoothness $m$ and an upper bound $c_0$ on the radius $c$ of the Sobolev
space, our quantization-estimation procedure is
adaptive to $m$ and $c$ in the usual statistical sense,
and is also adaptive to the coding regime.  In other words, 
given a storage budget $B_\eps$, the coding procedure
achieves the optimal rate and constant for the unknown
$m$ and $c$, operating in the corresponding regime for those
parameters.

In the following section we establish some notation, outline our proof
strategy, and present some simple examples.  In
Section~\ref{sec:lowerbound} we state and prove our main result on
quantized minimax lower bounds, relegating some of the technical
details to an appendix. In Section~\ref{sec:achievability} we show
asymptotic achievability of these lower bounds, using a quantized
estimation procedure based on adaptive James-Stein estimation and
quantization in blocks, again deferring proofs of technical lemmas to
the supplementary material. This is followed by a presentation of some
results from experiments in Section~\ref{sec:experiments},
illustrating the performance and properties of the proposed quantized
estimation procedure.

\section{Quantized estimation and minimax risk} 
\label{sec:general}
\def\given{\,|\,}

Suppose that $(X_1,\dots, X_n)\in\mathcal X^n$ is
a random vector drawn from a distribution $P_n$. Consider the problem of estimating a functional
$\theta_n=\theta(P_n)$ of the distribution, assuming $\theta_n$ is
restricted to lie in a parameter space $\Theta_n$.   To unclutter some
of the notation, we will suppress the subscript $n$ and write $\theta$
and $\Theta$ in the following, keeping in mind that nonparametric settings
are allowed.  The subscript $n$ will be maintained for random variables.
The minimax $\ell_2$ risk 
of estimating $\theta$ is then defined as
\begin{equation*}
R_n(\Theta)=\inf_{\hat\theta_n}\sup_{\theta\in\Theta}\mathbb E_\theta\|\theta-\hat\theta_n\|^2
\end{equation*}
where the infimum is taken over all possible estimators
$\hat\theta_n:\mathcal X^n\to\Theta$ that are measurable
with respect to the data $X_1,\dots,X_n$. 
We will abuse notation by using $\hat\theta_n$ to denote both the estimator 
and the estimate calculated based on an observed set of data.
Among numerous approaches to
obtaining the minimax risk,
the Bayesian method is best aligned with quantized estimation.  
Consider a prior distribution $\pi(\theta)$ whose support is a subset of $\Theta$. Let $\delta(X_{1:n})$
be the posterior mean of $\theta$ given the data $X_1,\dots,X_n$, which minimizes the integrated
risk. Then for any estimator $\hat\theta_n$, 
\begin{equation*}
\sup_{\theta\in\Theta}\mathbb E_\theta\|\theta-\hat\theta_n\|^2\geq\int_\Theta\mathbb E_\theta\|\theta-\hat\theta_n\|^2 d\pi(\theta)\geq  \int_\Theta\mathbb E_\theta\|\theta-\delta(X_{1:n})\|^2 d\pi(\theta).
\end{equation*}
Taking the infimum over $\hat\theta_n$ yields
\begin{equation*}
\inf_{\hat\theta_n}\sup_{\theta\in\Theta}\mathbb
E_\theta\|\theta-\hat\theta_n\|^2\geq\int_\Theta\mathbb
E_\theta\|\theta-\delta(X_{1:n})\|^2 d\pi(\theta)\triangleq
R_n(\Theta;\pi).
\end{equation*}  
Thus, any prior distribution supported on $\Theta$
gives a lower bound on the minimax risk, and selecting the
least-favorable prior leads to the largest lower bound provable by
this approach.

Now consider constraints on the storage or
communication cost of our estimate.
We restrict to the set of estimators that
use no more than a total of $B_n$ bits; 
that is, the estimator takes
at most $2^{B_n}$ different values. 
Such \textit{quantized estimators} can be formulated by the following two-step procedure.
First, an \emph{encoder} maps 
the data $X_{1:n}$ to an index $\phi_n(X_{1:n})$, where
\begin{equation*}
\phi_n:\mathbb {\mathcal X}^n\to\{1,2,\dots,2^{B_n}\}
\end{equation*}
is the \emph{encoding function}.
The \emph{decoder}, after receiving or retrieving the index, 
represents the estimates based on a \emph{decoding function}
\begin{equation*}
\psi_n:\{1,2,\dots,2^{B_n}\}\to \Theta,
\end{equation*}
mapping the index to a codebook of estimates.  All that needs to be
transmitted or stored is the $B_n$-bit-long index, and the quantized
estimator $\hat\theta_n$ is simply $\psi_n\circ\phi_n$, the
composition of the encoder and the decoder functions.  Denoting by
$C(\hat\theta_n)$ the storage, in terms of the number of bits,
required by an estimator $\hat\theta_n$, the minimax risk of quantized
estimation is then defined as
\begin{equation*}
R_n(\Theta, B_n)=\inf_{\hat\theta_n,C(\hat\theta_n)\leq B_n}\sup_{\theta\in\Theta}\mathbb E_\theta\|\theta-\hat\theta_n\|^2,
\end{equation*}
and we are interested in the effect of the constraint on the minimax risk. 
Once again, we consider a prior distribution $\pi(\theta)$ supported
on $\Theta$ and let $\delta(X_{1:n})$ be the posterior mean of
$\theta$ given the data. The integrated risk can then be decomposed as
\begin{equation}
\label{decomp1}\begin{aligned}
\int_\Theta\mathbb E_\theta\|\theta-\hat\theta_n\|^2d\pi(\theta)&=\mathbb E\|\theta-\delta(X_{1:n})+\delta(X_{1:n})-\hat\theta_n\|^2\\
&=\mathbb E\|\theta-\delta(X_{1:n})\|^2+\mathbb E\|\delta(X_{1:n})-\hat\theta_n\|^2
\end{aligned}\end{equation}
where the expectation is with respect to the joint distribution of
$\theta\sim\pi(\theta)$ and $X_{1:n} \given \theta\sim P_\theta$, and the second equality is due to 
\begin{align*}
\nonumber
&\mathbb
E\langle\theta-\delta(X_{1:n}),\delta(X_{1:n})-\hat\theta_n\rangle\\
&=\mathbb E\left(\mathbb E\left(\langle\theta-\delta(X_{1:n}),\delta(X_{1:n})-\hat\theta_n\rangle \given X_{1:n}\right)\right)\\
&=\mathbb E\left(\langle\mathbb E(\theta-\delta(X_{1:n}) \given X_{1:n}),\delta(X_{1:n})-\hat\theta_n \rangle\right)\\
&=\mathbb E\left(\langle0,\delta(X_{1:n})-\hat\theta_n\rangle\right)=0,
\end{align*}
using the fact that $\theta\to X_{1:n}\to \hat\theta_n$ forms
a Markov chain. The first term in the decomposition (\ref{decomp1}) is
the Bayes risk $R_n(\Theta;\pi)$. 
The second term can be viewed as the excess risk due to quantization. 

Let $T_n=T(X_1,\dots,X_n)$ be a sufficient statistic for $\theta$. The
posterior mean can be expressed in terms of $T_n$ and we will abuse 
notation and write it as $\delta(T_n)$. Since the quantized
estimator $\hat\theta_n$ uses at most $B_n$ bits, we have
\begin{equation*}
B_n\geq H(\hat\theta_n)\geq H(\hat\theta_n)-H(\hat\theta_n
\given \delta(T_n))=I(\hat\theta_n;\delta(T_n)),
\end{equation*}
where $H$ and $I$ denote the Shannon entropy and mutual information, respectively.
Now consider the optimization
\begin{align*}
\inf_{P(\cdot \given \delta(T_n))}\ &\ \mathbb E\|\delta(T_n)-\tilde\theta_n\|^2\\
\text{such that}\ &\ I(\tilde\theta_n;\delta(T_n))\leq B_n
\end{align*}
where the infimum is over all conditional distributions
$P(\tilde\theta_n \given \delta(T_n))$. 
This parallels the definition of the distortion rate function,
minimizing the distortion under a constraint on mutual information
\cite{gallager1968information}. 
Denoting the value of this optimization by $Q_n(\Theta, B_n;\pi)$, we can lower bound
the quantized minimax risk by
\begin{equation*}
R_n(\Theta, B_n)\geq R_n(\Theta;\pi)+Q_n(\Theta, B_n;\pi).
\end{equation*}
Since each prior distribution $\pi(\theta)$ supported on $\Theta$ gives a
lower bound, we have
\begin{equation*}\label{eqn_generallowerbound}
R_n(\Theta, B_n)\geq \sup_\pi \Bigl\{R_n(\Theta;\pi)+Q_n(\Theta, B_n;\pi)\Bigr\}
\end{equation*}
and the goal becomes to obtain a least favorable prior for the
quantized risk.

Before turning to the case of quantized estimation
over Sobolev spaces, we illustrate this technique on some simpler,
more concrete examples.

\begin{example}[Normal means in a hypercube]
Let $X_i\sim \mathcal N(\theta,\sigma^2I_d)$ for $i=1,2,\dots,n$. Suppose that $\sigma^2$ is known and $\theta\in[-\tau,\tau]^d$ is to be estimated. We choose the prior $\pi(\theta)$ on $\theta$ to be a product distribution with density
\begin{equation*}
\pi(\theta)=\prod_{j=1}^d \frac{3}{2\tau^3}{\left(\tau-|\theta_j|\right)_+}^2.
\end{equation*}
It is shown in \cite{johnstone2015gaussian} that 
\begin{equation*}
R_n(\Theta;\pi)\geq \frac{\sigma^2
  d}{n}\frac{\tau^2}{\tau^2+12\sigma^2/n}\geq
c_1\frac{\sigma^2 d}{n}
\end{equation*}
where $c_1=\frac{\tau^2}{\tau^2+12\sigma^2}$. Turning to
$Q_n(\Theta,B_n;\pi)$, let $T^{(n)}=(T^{(n)}_1,\dots,T^{(n)}_d)=\mathbb E(\theta|X_{1:n})$ be the 
posterior mean of $\theta$. In fact, by the independence and symmetry among
the dimensions, we know $T_1,\dots,T_d$ are independently and identically 
distributed. Denoting by $T_0^{(n)}$ this common distribution, we have 
\begin{equation*}
Q_n(\Theta,B_n;\pi)\geq d\cdot q(B_n/d)
\end{equation*}
where $q(B)$ is the distortion rate function for $T^{(n)}_0$, i.e.,
the value of the following problem
\begin{align*}
\inf_{P(\hat T\given T_0^{(n)})}\ &\ \mathbb E(T_0^{(n)}-\hat T)^2\\
\text{such that}\ &\ I(\hat T;T_0^{(n)})\leq B.
\end{align*}
Now using the Shannon lower bound \cite{cover2006elements}, we get 
\begin{equation*}
Q_n(\Theta,B_n;\pi)\geq  \frac{d}{2\pi e}\cdot 2^{h(T_0^{(n)})}\cdot 2^{-\frac{2B_n}{d}}.
\end{equation*}
Note that as $n\to\infty$, $T_0^{(n)}$ converges to $\theta$ in distribution, so there exists 
a constant $c_2$ independent of $n$ and $d$ such that
\begin{equation*}
R_n(\Theta,B_n)\geq c_1\frac{\sigma^2 d}{n}+c_2d\,2^{-\frac{2B_n}{d}}.
\end{equation*}
This lower bound intuitively shows the risk is regulated by two
factors, the estimation error and the quantization error; whichever is
larger dominates the risk.   The scaling behavior of this lower bound
(ignoring constants) can be achieved by first quantizing each of the $d$ intervals
$[-\tau,\tau]$ using $B_n/d$ bits each,
and then mapping the {\sc mle} to its closest codeword. 
\end{example}


\begin{example}[Gaussian sequences in Euclidean balls] 
In the example shown above, the lower bound is tight only in terms of
the scaling of the key parameters.  In some instances, we are able to find
an asymptotically tight lower bound for which we can show
achievability of both the rate and the constants. Estimating the mean vector of a Gaussian
sequence with an $\ell_2$ norm constraint on the mean is one of such
case, as we showed in previous work \cite{zhu2014quantized}.

Specifically, let $X_i \sim \mathcal N(\theta_i, \sigma^2_n)$ for $i=1,2,\ldots,
n$, where $\sigma^2_n = \sigma^2/n$.   Suppose that the parameter
$\theta = (\theta_1,\ldots, \theta_n)$ lies in the 
Euclidean ball $\Theta_n(c) = \left\{\theta : \sum_{i=1}^n \theta_i^2 \leq
c^2\right\}$.  Furthermore, suppose that $B_n = nB$.  Then using the prior $\theta_i \sim \mathcal N(0,c^2)$ it can be
shown that 
\begin{equation*}
\liminf_{n\to\infty} R_n(\Theta_n(c), B_n) \geq \frac{\sigma^2
  c^2}{\sigma^2 + c^2} + \frac{c^4 2^{-2B}}{\sigma^2 + c^2} .
\end{equation*}
The asymptotic estimation error
$\sigma^2 c^2 / (\sigma^2 + c^2)$ is the well-known Pinsker
bound for the Euclidean ball case.   
As shown in \cite{zhu2014quantized}, an explicit quantization scheme can be constructed that asymptotically
achieves this lower bound, realizing the smallest possible
quantization error $c^4 2^{-2B} / (\sigma^2 + c^2)$ for a budget of
$B_n = nB$ bits.

The Euclidean ball case is clearly relevant to the Sobolev ellipsoid
case, but new coding strategies
and proof techniques are required.  In particular, as will
be made clear in the sequel, we will use an adaptive allocation of bits
across blocks of coefficients, using more bits for blocks that have
larger estimated signal size.  Moreover, determination of the optimal
constants requires a detailed analysis of the worst case prior
distributions and the solution of a series of variational problems.

\end{example} 

\section{Quantized estimation over Sobolev spaces} \label{sec:lowerbound}
\def\F{{\mathcal F}}
\def\cE{{\mathcal E}}
\def\cQ{{\mathcal Q}}

Recall that the \emph{Sobolev space of order $m$ and radius $c$} is defined by
\begin{align*}
W(m,c)=\Big\{&f\in [0,1]\to\mathbb R:f^{(m-1)} \text{ is absolutely continuous and }\\
&\int_0^1(f^{(m)}(x))^2dx\leq c^2\Big\}.
\end{align*}
The \emph{periodic Sobolev space} is defined by
\begin{equation}\label{eqn_persobolev}
\tilde W(m,c)=\left\{f\in W(m,c):f^{(j)}(0)=f^{(j)}(1),\ j=0,1,\dots,m-1\right\}.
\end{equation}
The white noise model \eqref{eqn:wnm}
is asymptotically equivalent to making $n$
equally spaced observations along the sample path, $Y_i = f(i/n) +
\sigma \epsilon_i$, where $\epsilon_i \sim {\mathcal N}(0,1)$
\cite{brown1996asymptotic}.  In this formulation, the noise level in the formulation \eqref{eqn:wnm} 
scales as $\epsilon^2 = \sigma^2/n$, and the rate of convergence takes the
familiar form $n^{-\frac{2m}{2m+1}}$ where $n$ is the number of
observations.

To carry out quantized estimation we now require
an encoder
\begin{equation*}
\phi_\eps:\mathbb R^{[0,1]}\to\{1,2,\dots,2^{B_\eps}\}
\end{equation*}
which is a function applied to the sample path $X(t)$.
The decoding function then takes the form
\begin{equation*}
\psi_\eps:\{1,2,\dots,2^{B_\eps}\}\to\mathbb R^{[0,1]}
\end{equation*}
and maps the index to a function estimate.
As in the previous section, we write the composition of the encoder
and the decoder as 
$\hat f_\eps=\psi_\eps\circ\phi_\eps$, which we call the quantized estimator.
The communication or storage $C(\hat f_\eps)$ required by this quantized estimator
is no more than $B_\eps$ bits.  

To recast quantized estimation in terms of
an infinite sequence model, let
$({\varphi_j})_{j=1}^\infty$ be the trigonometric basis, and let
\begin{equation*}
\theta_j=\int_0^1 \varphi_j(t)f(t)dt,\quad j=1,2,\dots,
\end{equation*}
be the Fourier coefficients.
It is well known \cite{tsybakov2008introduction} that $f=\sum_{j=1}^\infty\theta_j\varphi_j$ belongs to $\tilde W(m,c)$ if and only if 
the Fourier coefficients $\theta$ belong to the \emph{Sobolev ellipsoid} defined as
\begin{equation}\label{eqn_sobolevellip}
\Theta(m,c)=\left\{\theta\in\ell_2: \sum_{j=1}^\infty a_j^2\theta_j^2\leq \frac{c^2}{\pi^{2m}}\right\}
\end{equation}
where 
\begin{equation*}
a_j=\begin{cases}
j^m,&\text{for even } j,\\
(j-1)^m,&\text{for odd } j.
\end{cases}
\end{equation*}
Although this is the standard definition of a Sobolev ellipsoid, 
for the rest of the paper 
we will set $a_j=j^m$, $j=1,2,\dots$ for convenience of analysis.
All of the results hold for both definitions of $a_j$.
Also note that (\ref{eqn_sobolevellip}) actually gives a more general definition,
since $m$ is no longer assumed to be an integer, as it is in (\ref{eqn_persobolev}).
Expanding with respect to the same orthonormal basis, the observed path
$X(t)$ is converted into an infinite Gaussian sequence
\begin{equation*}
Y_j=\int_0^1 \varphi_j(t) \,dX(t),\quad j=1,2,\dots,
\end{equation*}
with $Y_j\sim \mathcal N(\theta_j,\varepsilon^2)$. 
For an estimator $(\hat\theta_j)_{j=1}^\infty$ of $(Y_j)_{j=1}^\infty$, 
an estimate of $f$ is obtained by
\begin{equation*}
\hat f(x)=\sum_{j=1}^\infty \hat\theta_j\varphi_j(x)
\end{equation*}
with squared error $\|\hat f-f\|_2^2=\|\hat\theta-\theta\|_2^2$.
In terms of this standard reduction, the quantized minimax risk is thus reformulated as
\begin{equation}
R_\eps(m,c,B_\eps)=\inf_{\hat\theta_\eps,C(\hat\theta_\eps)\leq B_\eps}\sup_{\theta\in\Theta(m,c)}\E_\theta\|\theta-\hat\theta_\eps\|_2^2.\label{minimaxriskdef}
\end{equation}

To state our result, we need to define the value of the following
variational problem:
\begin{align}\label{var7}
& {\mathsf V}_{m,c,d}  \triangleq \\
\nonumber & \max_{(\sigma^2,x_0) \in
  \F(m,c,d)} \int_0^{x_0}\frac{\sigma^2(x)}{\sigma^2(x)+1}dx 
+ x_0\exp\left(\frac{1}{x_0}\int_0^{x_0}\log\frac{\sigma^4(x)}{\sigma^2(x)+1}dx-\frac{2d}{x_0}\right)
\end{align}
where the feasible set $\F(m,c,d)$ is the collection of increasing functions $\sigma^2(x)$ and
values $x_0$ satisfying
\begin{gather*}
\int_0^{x_0} x^{2m}\sigma^2(x)dx\leq c^2 \\
\frac{\sigma^4(x)}{\sigma^2(x)+1}\geq
\exp\left(\frac{1}{x_0}\int_0^{x_0}\log\frac{\sigma^4(x)}{\sigma^2(x)+1}dx-\frac{2d}{x_0}\right)
\text{ for all }x\leq x_0.
\end{gather*}
The significance and interpretation of the variational problem will
become apparent as we outline the proof of this result.

\begin{theorem}
Let $R_\eps(m,c,B_\eps)$ be defined as in
(\ref{minimaxriskdef}), for $m>0$ and $c>0$.
\begin{enumerate}[(i)]
\item 
If $B_\varepsilon\varepsilon^{\frac{2}{2m+1}}\to\infty$ as $\varepsilon\to0$, then
\begin{equation*}
\liminf_{\varepsilon\to 0} \varepsilon^{-\frac{4m}{2m+1}}R_\varepsilon(m,c,B_\varepsilon)\geq \mathsf P_{m,c}
\end{equation*}
where $\mathsf P_{m,c}$ is Pinker's constant defined in
(\ref{pinsker}).
\vskip15pt
\item If $B_\varepsilon\varepsilon^{\frac{2}{2m+1}}\to d$ for some constant $d$ as $\varepsilon\to0$, then
\begin{equation*}
\liminf_{\varepsilon\to 0}
\varepsilon^{-\frac{4m}{2m+1}}R_\varepsilon(m,c,B_\varepsilon)\geq
\mathsf P_{m,c}+\mathsf Q_{m,c,d} = {\mathsf V}_{m,c,d}
\end{equation*}
where $\mathsf V_{m,c,d}$ is the value of the variational problem \eqref{var7}.
\vskip15pt
\item If $B_\varepsilon\varepsilon^{\frac{2}{2m+1}}\to0$ and $B_\varepsilon\to\infty$ as $\varepsilon\to 0$, then
\begin{equation*}
\liminf_{\varepsilon\to 0}B_\varepsilon^{2m}R_\varepsilon(m,c,B_\varepsilon)\geq \frac{c^2m^{2m}}{\pi^{2m}}.
\end{equation*}
\end{enumerate}
\label{thm_lowerbound}
\end{theorem}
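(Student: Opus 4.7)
The plan is to apply the Bayes plus rate-distortion lower bound from Section~\ref{sec:general} in the infinite Gaussian sequence model obtained by projecting onto the trigonometric basis. For any prior $\pi$ supported on $\Theta(m,c)$,
\begin{equation*}
R_\eps(m,c,B_\eps)\;\ge\;R_\eps\bigl(\Theta(m,c);\pi\bigr)+Q_\eps\bigl(\Theta(m,c),B_\eps;\pi\bigr),
\end{equation*}
so it suffices to construct a prior that drives both terms up to the constants in the three cases. Following the Pinsker strategy, I would use a product Gaussian prior $\theta_j\sim\mathcal N(0,\sigma_j^2)$ for $j\le N_\eps$ and $\theta_j=0$ for $j>N_\eps$, with $(\sigma_j^2)$ chosen to nearly saturate the ellipsoid constraint $\sum_j j^{2m}\sigma_j^2\le c^2/\pi^{2m}$. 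The usual shrinkage/truncation trick makes the prior supported exactly on $\Theta(m,c)$ without affecting leading asymptotics.

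Under such a prior, both terms decouple across coordinates. The posterior mean is $\delta_j=\frac{\sigma_j^2}{\sigma_j^2+\eps^2}Y_j\sim\mathcal N(0,\tau_j^2)$ with $\tau_j^2=\sigma_j^4/(\sigma_j^2+\eps^2)$, so the Bayes risk equals $\sum_j\sigma_j^2\eps^2/(\sigma_j^2+\eps^2)$. For $Q_\eps$ I would invoke the distortion-rate function for a product of independent Gaussians under the total mutual-information budget $B_\eps$; Shannon's reverse water-filling gives
\begin{equation*}
Q_\eps\;\ge\;\sum_{j=1}^{N_\eps}\min(\lambda,\tau_j^2),\qquad B_\eps\;=\;\tfrac{1}{2}\sum_{j=1}^{N_\eps}\max\bigl(0,\log(\tau_j^2/\lambda)\bigr),
\end{equation*}
which on the active set $\{j:\tau_j^2\ge\lambda\}$ reduces to $N_\eps\lambda$ with $\lambda=\exp\bigl(N_\eps^{-1}\sum_j\log\tau_j^2-2B_\eps/N_\eps\bigr)$.

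The next step is to convert this block bound to the integral in \eqref{var7} via Pinsker's rescaling $x=\eps^{2/(2m+1)}j$, $\sigma^2(x)=\sigma_j^2/\eps^2$, $x_0=\eps^{2/(2m+1)}N_\eps$. Under this substitution the Sobolev constraint becomes $\int_0^{x_0} x^{2m}\sigma^2(x)\,dx\le c^2/\pi^{2m}$, the Bayes-risk sum becomes the Riemann sum $\eps^{4m/(2m+1)}\int_0^{x_0}\frac{\sigma^2(x)}{\sigma^2(x)+1}\,dx$, and the quantization sum becomes $\eps^{4m/(2m+1)} x_0\exp\bigl(x_0^{-1}\int_0^{x_0}\log\frac{\sigma^4(x)}{\sigma^2(x)+1}\,dx-2d/x_0\bigr)$ in the regime $B_\eps\eps^{2/(2m+1)}\to d$. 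The ``above-water'' requirement $\tau_j^2\ge\lambda$ for each $j\le N_\eps$ translates to exactly the second inequality defining $\F(m,c,d)$, and optimizing over admissible $(\sigma^2(\cdot),x_0)$ gives case (ii) with value $\mathsf V_{m,c,d}$. Case (i) is then obtained by sending $d\to\infty$, which makes the quantization term vanish and collapses the variational problem to Pinsker's optimization, giving $\mathsf P_{m,c}$; case (iii) requires renormalizing by $B_\eps^{2m}$ rather than $\eps^{4m/(2m+1)}$, after which the estimation term becomes negligible and optimizing the quantization term alone under the Sobolev constraint (via Lagrange multipliers on $\sigma_j^2$ and $R_j$ with $\sum R_j\le B_\eps$) produces the constant $c^2m^{2m}/\pi^{2m}$.

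The main technical obstacle will be making the rate-distortion lower bound rigorous for the infinite-dimensional posterior $\delta$ and then justifying the uniform passage from the discrete block sums to the integral variational problem over the class of feasible priors. Secondary issues include (a) arranging the shrinkage so that the Gaussian product prior is actually supported on $\Theta(m,c)$ without loss in leading constants, (b) verifying that the water-filling active-set condition is consistent with, and indeed binding at, the optimizer of \eqref{var7}, and (c) in case (iii), checking that the Bayes-risk contribution under the optimal prior is of strictly lower order than $B_\eps^{-2m}$, so that the quantization error alone delivers the claimed constant $c^2m^{2m}/\pi^{2m}$.
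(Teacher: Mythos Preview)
Your proposal is correct and follows essentially the same route as the paper: product Gaussian prior on a truncated set of coordinates, Bayes--risk plus rate--distortion decomposition, reverse water--filling for the quantization term, and the Pinsker rescaling $x=\eps^{2/(2m+1)}j$, $\sigma^2(x)=\sigma_j^2/\eps^2$ to pass to the continuous variational problem~\eqref{var7}.

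Two minor divergences are worth noting. For case~(i) you obtain $\mathsf P_{m,c}$ by sending $d\to\infty$ in the variational problem; the paper simply observes that quantized estimators are a subset of all estimators, so the unconstrained Pinsker lower bound applies directly---this is cleaner and avoids any uniformity issues in the limit. For the prior--support issue you write ``supported exactly on $\Theta(m,c)$''; the paper does not truncate the Gaussian prior to the ellipsoid but instead shrinks the variances by a factor $(1-\tau)$ and controls the residual integral over the complement via a chi--square tail bound, showing it is $o(I_\tau)$. Either device works, but the shrink--and--bound--the--tail version is what actually appears in the paper and is what you will need to make rigorous.
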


In the first regime where the number of bits $B_\varepsilon$ is much
greater than $\varepsilon^{-\frac{2}{2m+1}}$, we recover the same
convergence result as in Pinsker's theorem, in terms of both
convergence rate and leading constant. The proof of the lower bound
for this regime can directly follow the proof of Pinsker's theorem, since
the set of estimators considered in our minimax framework is a subset of
all possible estimators.

In the second regime where we have ``just enough''
bits to preserve the rate, we suffer a loss in terms of the leading
constant. In this ``Goldilocks regime,'' the optimal rate
$\varepsilon^{-\frac{4m}{2m+1}}$ is achieved but the constant in front
of the rate is Pinsker's constant $\mathsf P_{m,c}$ plus a positive
quantity $\mathsf Q_{m,c,d}$ determined by
the variational problem. 

While the solution to this variational problem does not appear to have an explicit
form, it can be computed numerically.   We discuss this term at length
in the sequel, where we explain the origin of the variational problem,
compute the constant numerically and approximate it from above and
below. The constants $\mathsf P_{m,c}$ and $\mathsf Q_{m,c,d}$ are
shown graphically in Figure \ref{fig:constant}.
Note that the parameter $d$ can be thought of as
the average number of bits per coefficient used by an optimal
quantized estimator, since
$\varepsilon^{-\frac{2}{2m+1}}$ is asymptotically the number
of coefficients needed to estimate at the classical minimax rate.
As shown in Figure~\ref{fig:constant}, the constant
for quantized estimation quickly approaches the 
Pinsker constant as $d$ increases---when $d=3$ the
two are already very close.

\begin{figure}[t]
\begin{center}
\begin{tabular}{c}
\hskip35pt
\includegraphics{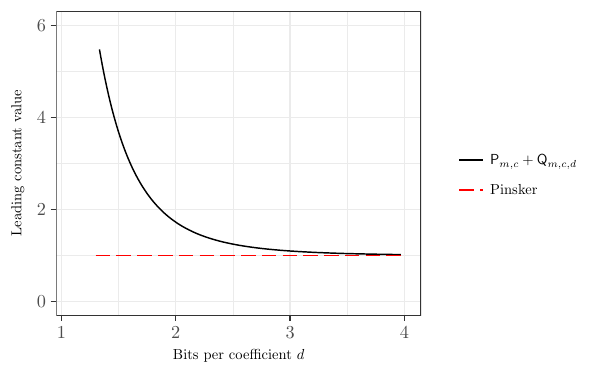} \\
\end{tabular}
\caption{The constants $\mathsf P_{m,c} + \mathsf Q_{m,c,d}$ as a
  function of quantization level $d$ in the sufficient regime,
  where $B_\varepsilon\varepsilon^{\frac{2}{2m+1}}\to d$. 
  The parameter $d$ can be thought of as
  the average number of bits per coefficient used by an optimal
  quantized estimator, because
  $\varepsilon^{-\frac{2}{2m+1}}$ is asymptotically the number
  of coefficients needed to estimate at the classical minimax rate.
  Here we take $m=2$ and $c^2/\pi^{2m}=1$.
  The curve indicates that with only $2$ bits per
  coefficient, optimal quantized minimax estimation degrades
  by less than a factor of 2 in the constant. With $3$ bits
  per coefficient, the constant is very close to the classical Pinsker
  constant.}
\label{fig:constant}
\end{center}
\end{figure}

In the third regime where the communication budget is insufficient for
the estimator to achieve the optimal rate, we obtain a sub-optimal
rate which no longer depends explicitly on the noise level
$\varepsilon$ of the model. In this regime, quantization error
dominates, and the risk decays at a rate of $B^{-\frac{1}{2m}}$ no
matter how fast $\varepsilon$ approaches zero, as long as
$B\ll\varepsilon^{-\frac{2}{2m+1}}$.  Here the analogue of Pinsker's
constant takes a very simple form.

\vskip10pt
\begin{proof}[Proof of Theorem \ref{thm_lowerbound}]
Consider a Gaussian prior
distribution on $\theta=(\theta_j)_{j=1}^\infty$ 
with $\theta_j\sim\mathcal N(0,\sigma_j^2)$ for $j=1,2,\dots,$
in terms of parameters
$\sigma^2=(\sigma_j^2)_{j=1}^\infty$ to be specified later. One
requirement for the variances is
\begin{equation*}
\sum_{j=1}^\infty a_j^2\sigma_j^2\leq \frac{c^2}{\pi^{2m}}.
\end{equation*}
We denote this prior distribution by $\pi(\theta;\sigma^2)$,
and show in Section~\ref{sec_proof} that it is asymptotically
concentrated on the ellipsoid $\Theta(m,c)$.  Under
this prior the model is
\begin{align*}
\theta_j & \sim\mathcal N(0,\sigma_j^2)\\
Y_j \given \theta_j & \sim\mathcal N(\theta_j,\varepsilon^2),\quad j=1,2,\dots\label{model}
\end{align*}
and 
the marginal distribution of $Y_j$ is thus ${\mathcal N}(0,\sigma^2_j+\eps^2)$.
Following the strategy outlined in Section~\ref{sec:general}, let
$\delta$ denote the posterior mean of $\theta$ given $Y$ under this
prior, and consider the optimization
\begin{align*}
\quad \inf \ &\ \mathbb E\|\delta-\tilde\theta\|^2\\
\text{such that}\ &\ I(\delta;\tilde\theta)\leq B_\epsilon
\end{align*}
where the infimum is over all distributions on $\tilde\theta$ such
that $\theta \to Y \to \tilde\theta$ forms a Markov chain.
Now, the posterior mean satisfies $\delta_j = \gamma_j Y_j$ where 
$\gamma_j = \sigma_j^2 / (\sigma_j^2 + \epsilon^2)$. 
Note that the Bayes risk 
under this prior is
\begin{equation*}
\E\|\theta - \delta\|_2^2 = \sum_{j=1}^\infty \frac{\sigma_j^2
  \eps^2}{\sigma_j^2 + \eps^2}.
\end{equation*}
Define
\begin{equation*}
  \mu_j^2  \triangleq \E(\delta_j - \tilde\theta_j)^2.
\end{equation*}
Then the classical rate distortion argument \cite{cover2006elements} gives that 
\begin{align*}
I(\delta; \tilde\theta) & \geq \sum_{j=1}^\infty I(\gamma_j Y_j; \tilde\theta_j) \\
&  \geq \sum_{j=1}^\infty \frac{1}{2} \log_+ \left(\frac{\gamma_j^2
  (\sigma^2_j +\eps^2)}{\mu_j^2}\right) \\
&  = \sum_{j=1}^\infty \frac{1}{2} \log_+
\left(\frac{\sigma_j^4}{\mu_j^2 (\sigma_j^2 + \eps^2)}\right)
\end{align*}
where $\log_+(x) = \max(\log x, 0)$.  
Therefore, the quantized minimax risk is lower bounded by
\begin{equation*}
R_\eps( m, c, B_\eps) = \inf_{\hat\theta_\eps,C(\hat\theta_\eps)\leq B_\eps}\sup_{\theta\in\Theta(m,c)} \mathbb
E\|\theta-\hat\theta_\eps\|^2
\geq V_\eps(B_\eps,m,c) (1+o(1))
\end{equation*}
where $V_\eps(B_\eps, m, c)$ is the value of the 
optimization
\begin{equation}
\tag{$\mathcal P_1$}\label{P1}
\begin{aligned}
\max_{\sigma^2}\min_{\mu^2}\ &\ \sum_{j=1}^\infty\mu_j^2+\sum_{j=1}^\infty\frac{\sigma_j^2\varepsilon^2}{\sigma_j^2+\varepsilon^2}\\
\text{such that}\ &\ \sum_{j=1}^\infty\frac{1}{2}\log_+\left(\frac{\sigma_j^4}{\mu_j^2(\sigma_j^2+\varepsilon^2)}\right)\leq B_\eps\\
&\ \sum_{j=1}^\infty a_j^2\sigma_j^2\leq \frac{c^2}{\pi^{2m}}
\end{aligned}
\end{equation}
and the $(1+o(1))$ deviation term is analyzed in
the supplementary material.

Observe that the quantity $V_{\varepsilon}(B_\varepsilon,m,c)$ can be upper and lower bounded by 
\begin{equation}\label{vupperlower}
\max\Bigl\{R_{\varepsilon}(m,c),Q_{\varepsilon}(m,c,B_\varepsilon)\Bigr\}
\leq V_\eps(m, c, B_\eps)\\
\leq R_{\eps}(m,c)+ Q_{\varepsilon}(m,c,B_\varepsilon)
\end{equation}
where the estimation error term $R_{\varepsilon}(m,c)$ is the value of the optimization
\begin{equation}
\tag{$\mathcal R_{1}$}\label{eqn_E1}
\begin{aligned}
\max_{\sigma^2}\ &\ \sum_{j=1}^\infty\frac{\sigma_j^2\varepsilon^2}{\sigma_j^2+\varepsilon^2}\\
\text{such that}\ &\ \sum_{j=1}^\infty a_j^2\sigma_j^2\leq \frac{c^2}{\pi^{2m}}
\end{aligned}
\end{equation}
and the quantization error term $Q_{\varepsilon}(m,c,B_\varepsilon)$ is the value of the optimization
\begin{equation}
\tag{$\mathcal Q_{1}$}\label{eqn_Q1}
\begin{aligned}
\max_{\sigma^2}\min_{\mu^2}\ &\ \sum_{j=1}^\infty\mu_j^2\\
\text{such that}\ &\ \sum_{j=1}^\infty\frac{1}{2}\log_+\left(\frac{\sigma_j^4}{\mu_j^2(\sigma_j^2+\varepsilon^2)}\right)\leq B_\eps\\
&\ \sum_{j=1}^\infty a_j^2\sigma_j^2\leq \frac{c^2}{\pi^{2m}}.
\end{aligned}
\end{equation}
The following results specify the leading order asymptotics of these quantities.
\begin{lemma} \label{lem_v1}
As $\eps\to0$,
\begin{equation*}
R_{\eps}(m,c)=\mathsf P_{m,c}\,\varepsilon^{\frac{4m}{2m+1}}(1+o(1)).
\end{equation*}
\end{lemma}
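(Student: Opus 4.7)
The plan is to recognize that this is exactly the classical Pinsker variational problem, and to reproduce its solution via Lagrangian analysis followed by a Riemann-sum approximation. I would first form the Lagrangian
\begin{equation*}
\mathcal L(\sigma^2,\lambda) = \sum_{j=1}^\infty \frac{\sigma_j^2 \eps^2}{\sigma_j^2 + \eps^2} - \lambda \left( \sum_{j=1}^\infty a_j^2 \sigma_j^2 - \frac{c^2}{\pi^{2m}}\right),
\end{equation*}
and differentiate in $\sigma_j^2$. The stationarity condition $\eps^4 / (\sigma_j^2 + \eps^2)^2 = \lambda a_j^2$ rearranges to the water-filling form
\begin{equation*}
\sigma_j^{2*} = \eps^2 \left( \frac{1}{s\, a_j} - 1\right)_+, \qquad s := \sqrt{\lambda},
\end{equation*}
with cutoff index $N := s^{-1/m}$ (so that only coordinates with $a_j = j^m \le 1/s$ are active). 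This reduces a problem over infinitely many variables to a one-parameter problem in $s$ (equivalently, in $N$).

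Next I would substitute $\sigma_j^{2*}$ into both the constraint and the objective, obtaining
\begin{equation*}
\eps^2 \sum_{j=1}^{\lfloor N\rfloor} \bigl(j^m/s - j^{2m}\bigr) = \frac{c^2}{\pi^{2m}}, \qquad R_\eps(m,c) = \eps^2 \sum_{j=1}^{\lfloor N\rfloor} (1 - s\, j^m).
\end{equation*}
As $\eps \to 0$ the cutoff $N \to \infty$, so I would replace each sum by the corresponding integral $\int_0^N$ with $u = x/N$, absorbing Euler-Maclaurin remainders into $o(1)$ factors. The constraint integral evaluates to $\eps^2 N^{2m+1} m / [(m+1)(2m+1)]$, which, when set equal to $c^2/\pi^{2m}$, yields
\begin{equation*}
N = \Bigl(\frac{c^2 (m+1)(2m+1)}{\pi^{2m}\, m\, \eps^{2}}\Bigr)^{1/(2m+1)}(1+o(1)).
\end{equation*}
The objective integral evaluates to $\eps^2 N \cdot m/(m+1)$, and substituting the expression for $N$ gives $\mathsf P_{m,c}\, \eps^{4m/(2m+1)}(1+o(1))$ after collecting exponents, matching the definition in \eqref{pinsker}.

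The main technical step is controlling the Riemann-sum error. Because each summand $g_j(s) := (j^m/s - j^{2m})_+$ and $h_j(s) := (1 - s j^m)_+$ is monotone in $j$ on the active set $\{j \le N\}$, the difference between the sum and the integral is bounded by the maximum summand, which is $O(1/s) = O(N^m) = O(\eps^{-2m/(2m+1)})$ for the constraint and $O(1)$ for the objective; in both cases this is a factor of $N = \eps^{-2/(2m+1)}$ smaller than the leading term, giving relative error $O(1/N) = o(1)$. Optimality of $\sigma^{2*}$ follows from concavity of $x \mapsto \eps^2 x/(x+\eps^2)$ in $x \ge 0$, so the KKT conditions are sufficient. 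The only mildly delicate point is verifying that the optimal $s$ (and hence $N$) is unique and depends continuously on $\eps$, but this is immediate since the constraint is strictly monotone in $s$ on $(0, \infty)$.
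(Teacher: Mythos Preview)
Your proposal is correct and is exactly the standard derivation of Pinsker's constant; the paper's own ``proof'' is simply a one-line citation to \cite{nussbaum1999minimax} and \cite{tsybakov:2008}, so you have supplied the details the paper outsources. Two minor slips worth noting: the constraint summand $g_j(s)=j^m/s-j^{2m}$ is unimodal rather than monotone on the active set (it peaks at $j\approx N/2^{1/m}$), and its maximum is of order $N^{2m}$, not $O(1/s)=O(N^m)$ as you wrote; but since the leading term of the constraint sum is of order $N^{2m+1}$, the relative Riemann-sum error is still $O(1/N)=o(1)$, so your conclusion stands unchanged.
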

\begin{lemma} \label{lem_v2}
As $\eps\to0$,
\begin{equation}
Q_{\eps}(m,c,B_\eps)\leq \frac{c^2m^{2m}}{\pi^{2m}}B_\varepsilon^{-2m}(1+o(1)).\label{eqn_upperboundQ}
\end{equation}
Moreover, if $B_\varepsilon\varepsilon^{\frac{2}{2m+1}}\to0$ and $B_\varepsilon\to\infty$,
\begin{equation*}
Q_{\eps}(m,c,B_\eps)=\frac{c^2m^{2m}}{\pi^{2m}}B_\varepsilon^{-2m}(1+o(1)).
\end{equation*}
\end{lemma}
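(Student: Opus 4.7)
The plan is to analyze $(\mathcal Q_1)$ via classical Gaussian rate-distortion theory. Setting $A_j := \sigma_j^4/(\sigma_j^2+\eps^2)$, the inner minimization over $\mu^2$ is, for fixed $\sigma^2$, a standard reverse water-filling problem whose solution is $\mu_j^2 = \min(\lambda, A_j)$, with threshold $\lambda$ determined by $\sum_{j:A_j>\lambda} \log(A_j/\lambda) = 2B_\eps$; the value is
\[
D(B_\eps;A) \;=\; \sum_{j:A_j>\lambda}\lambda \;+\; \sum_{j:A_j\leq\lambda} A_j.
\]
Since $A_j \leq \sigma_j^2$ and the Gaussian distortion-rate function is monotone in the source variances, $D(B_\eps;A) \leq D(B_\eps;\sigma^2)$, which decouples the upper bound from $\eps$ and reduces it to maximizing $D(B_\eps;\sigma^2)$ over the ellipsoid $\sum j^{2m}\sigma_j^2 \leq c^2/\pi^{2m}$.

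For the upper bound \eqref{eqn_upperboundQ}, I would parameterize by the cardinality $N$ of the active block $\{j:\sigma_j^2>\lambda\}$ and bound $D(B_\eps;\sigma^2) \leq N\lambda + \sum_{j>N}\sigma_j^2$. The rate constraint yields $\prod_{j=1}^N \sigma_j^2 = \lambda^N e^{2B_\eps}$, and combining with the ellipsoid via AM-GM on $\{j^{2m}\sigma_j^2\}_{j\leq N}$ gives
\[
\lambda \;\leq\; \frac{c^2/\pi^{2m}}{N\,(N!)^{2m/N}}\,e^{-2B_\eps/N}.
\]
Applying Stirling's formula $(N!)^{1/N} \sim N/e$ and optimizing in $N$ (the first-order condition is $N = B_\eps/m$) produces $N\lambda \leq \frac{c^2 m^{2m}}{\pi^{2m}} B_\eps^{-2m}(1+o(1))$. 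The tail contribution $\sum_{j>N}\sigma_j^2 \leq (c^2/\pi^{2m})/(N+1)^{2m}$ is of comparable order but must be absorbed into the leading constant by carefully enforcing the trade-off with the ellipsoid budget already consumed by the active block.

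For the matching lower bound in the insufficient regime, I would exhibit the near-optimal prior $\sigma_j^2 = \frac{c^2}{\pi^{2m} N j^{2m}}$ for $j \leq N = \lceil B_\eps/m\rceil$ (and zero otherwise), which saturates the ellipsoid constraint. The smallest active variance is $\sigma_N^2 \sim c^2 m^{2m+1}/(\pi^{2m} B_\eps^{2m+1})$, and the hypothesis $B_\eps \eps^{2/(2m+1)}\to 0$ is precisely what forces $\sigma_N^2/\eps^2 \to \infty$, so that $A_j/\sigma_j^2 \to 1$ uniformly on the active block. Hence $D(B_\eps;A) = D(B_\eps;\sigma^2)(1-o(1))$, and plugging this prior into the explicit reverse water-filling formula, together with Stirling, yields the matching lower bound $\frac{c^2 m^{2m}}{\pi^{2m}} B_\eps^{-2m}(1+o(1))$.

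The main obstacle will be verifying that this Lagrangian solution---an active block with $\sigma_j^2 \propto j^{-2m}$ concentrated on the low-frequency indices---is asymptotically the global maximizer of the outer problem. One must in particular rule out competing configurations that place significant subthreshold mass at high $j$, and show that the integrality of $N$ and the Stirling error contribute only to the $(1+o(1))$ factor. A secondary technical point is verifying that the water level satisfies $\lambda < \sigma_N^2$ at the claimed optimum, so that exactly $N$ components are truly active and the reverse water-filling formula applies without boundary corrections.
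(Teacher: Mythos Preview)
Your approach is essentially the same as the paper's: reverse water-filling for the inner problem, the inequality $A_j=\sigma_j^4/(\sigma_j^2+\eps^2)\le\sigma_j^2$ to decouple from $\eps$, AM--GM on $\{j^{2m}\sigma_j^2\}$, and Stirling with the choice $N\sim B_\eps/m$. For the lower bound you use exactly the paper's prior $\sigma_j^2=\frac{c^2/\pi^{2m}}{Nj^{2m}}$ on $\{1,\dots,N\}$ and the hypothesis $B_\eps\eps^{2/(2m+1)}\to 0$ to force $\sigma_N^2/\eps^2\to\infty$; the paper phrases this same step as showing that the truncation index $J_\eps'$ (the largest $j$ with $A_j\ge\tilde\sigma_{J_\eps}^2$) satisfies $J_\eps'\sim J_\eps$.

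The one place where the paper is cleaner is precisely the tail issue you flag as the ``main obstacle.'' The paper does not try to absorb the subthreshold tail into the constant. Instead it observes (as a structural property of the outer maximization) that at the optimizer of $(\mathcal Q_1)$ the sequence $\sigma^2$ places \emph{no} mass below the water level: there is an integer $J$ with $A_j\ge\eta$ for $j\le J$ and $\sigma_j^2=0$ for $j>J$. With this observation the value is exactly $J\eta$, the tail term vanishes identically, and your AM--GM chain gives the sharp bound directly. Your worry about ``competing configurations that place significant subthreshold mass at high $j$'' is therefore handled not by a delicate budget trade-off, but by showing such configurations are never optimal: any inactive mass can be moved to an active index, which raises $\eta$ (hence all $J$ terms of the objective) while the inactive contribution $A_j$ it replaced was below $\eta$ anyway. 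This is the missing step in your outline; once you add it, your argument and the paper's coincide.
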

\noindent This yields the following closed form upper bound.
\begin{corollary}
Suppose that $B_\varepsilon\to\infty$ and $\varepsilon\to 0$. Then
\begin{equation}
\label{upperbound}
 V_\eps(m, c, B_\eps)\leq \left(\mathsf P_{m,c}\,\varepsilon^{\frac{4m}{2m+1}}+\frac{c^2m^{2m}}{\pi^{2m}}B_\varepsilon^{-2m}\right)(1+o(1)).
\end{equation}
\end{corollary}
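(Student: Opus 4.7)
The plan is to combine the upper bound from the display labeled (\ref{vupperlower}) with the two lemmas that immediately precede the corollary. From (\ref{vupperlower}) we have the unconditional inequality
\[
V_\eps(m,c,B_\eps) \;\leq\; R_\eps(m,c) + Q_\eps(m,c,B_\eps),
\]
so the task reduces to substituting the asymptotic bounds for the two terms on the right. Lemma \ref{lem_v1} gives $R_\eps(m,c) = \mathsf P_{m,c}\,\varepsilon^{\frac{4m}{2m+1}}(1+\alpha_\eps)$ with $\alpha_\eps \to 0$, and the first assertion of Lemma \ref{lem_v2} (which is stated without any regime condition on $B_\eps$) gives $Q_\eps(m,c,B_\eps) \leq \frac{c^2 m^{2m}}{\pi^{2m}}\,B_\eps^{-2m}(1+\beta_\eps)$ with $\beta_\eps \to 0$, using only $B_\eps \to \infty$.

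The only point requiring a line of care is that the two individual $(1+o(1))$ factors need to be absorbed into a single $(1+o(1))$ factor multiplying the sum, uniformly in the relative sizes of $\varepsilon^{\frac{4m}{2m+1}}$ and $B_\eps^{-2m}$. Since both leading constants are positive and both error terms are nonnegative, setting $\gamma_\eps \triangleq \max(\alpha_\eps,\beta_\eps)$ yields
\[
V_\eps(m,c,B_\eps) \;\leq\; (1+\gamma_\eps)\!\left(\mathsf P_{m,c}\,\varepsilon^{\frac{4m}{2m+1}} + \frac{c^2 m^{2m}}{\pi^{2m}}\,B_\eps^{-2m}\right),
\]
and $\gamma_\eps \to 0$ as $\eps \to 0$, giving exactly the stated bound.

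I do not anticipate a genuine obstacle here: the substance of the argument is entirely contained in Lemmas \ref{lem_v1} and \ref{lem_v2}, and the corollary is a one-step algebraic consequence of those two bounds together with the right-hand inequality in (\ref{vupperlower}). The mild subtlety about uniform absorption of the $o(1)$ terms is handled by the max trick above, which is valid precisely because both summands on the right-hand side are nonnegative.
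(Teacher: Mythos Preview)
Your proposal is correct and matches the paper's approach exactly: the paper presents the corollary as an immediate consequence of the right-hand inequality in (\ref{vupperlower}) together with Lemmas \ref{lem_v1} and \ref{lem_v2}, without any additional argument. Your handling of the merged $(1+o(1))$ factor via $\gamma_\eps = \max(\alpha_\eps,\beta_\eps)$ is in fact more careful than what the paper writes out (one tiny remark: $\alpha_\eps$ from Lemma \ref{lem_v1} need not be nonnegative, but replacing it by $|\alpha_\eps|$ fixes this trivially).
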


In the insufficient regime
$B_\varepsilon\varepsilon^{\frac{2}{2m+1}}\to0$ and
$B_\varepsilon\to\infty$ as $\varepsilon\to 0$,
equation \eqref{vupperlower} and Lemma \ref{lem_v2} show that
\begin{equation*}
V_\eps(m, c, B_\eps)=\frac{c^2m^{2m}}{\pi^{2m}}B_\varepsilon^{-2m}(1+o(1)).
\end{equation*}
Similarly, in the over-sufficient regime
$B_\eps\eps^{\frac{2}{2m+1}}\to\infty$ as $\varepsilon\to 0$,
we conclude that
\begin{equation*}
V_\eps(m, c, B_\eps)=\mathsf P_{m,c}\,\varepsilon^{\frac{4m}{2m+1}}(1+o(1)).
\end{equation*}

We now turn to the sufficient regime $B_\eps \eps^{\frac{2}{2m+1}}\to d$.
We begin by making three observations about the solution to the optimization (\ref{P1}).
First, we note that the series $(\sigma^2_j)_{j=1}^\infty$ that solves
(\ref{P1}) can be assumed to be decreasing. If $(\sigma^2_j)$ were not in decreasing order, we could
rearrange it to be decreasing, and correspondingly rearrange
$(\mu^2_j)$, without violating the constraints or changing the value of the
optimization.  Second,  we note that
given $(\sigma^2_j)$, the optimal $(\mu^2_j)$ is obtained by the ``reverse
water-filling'' scheme \cite{cover2006elements}. Specifically, there exists $\eta > 0$ such that
\begin{equation*}
\mu_j^2=\begin{cases}
\eta & \text{ if } \displaystyle \frac{\sigma_j^4}{\sigma_j^2+\varepsilon^2}\geq \eta\\
\displaystyle 
\frac{\sigma_j^4}{\sigma_j^2+\varepsilon^2}& \text{ otherwise},
\end{cases}
\end{equation*}
where $\eta$ is chosen so that
\begin{equation*}
\frac{1}{2}\sum_{j=1}^\infty\log_+\left(\frac{\sigma_j^4}{\mu_j^2(\sigma_j^2+\varepsilon^2)}\right)\leq B_\eps.
\end{equation*}
Third, there exists an integer $J>0$ such that the optimal series $(\sigma^2_j)$ satisfies
\begin{equation*}
\frac{\sigma_j^4}{\sigma^2_j+\varepsilon^2}\geq \eta,\text{ for } j=1,\dots, J\quad\text{and}\quad\sigma_j^2=0, \text{ for }j>J,
\end{equation*}
where $\eta$ is the ``water-filling level'' for $(\mu^2_j)$ (see \cite{cover2006elements}).
Using these three observations, the optimization (\ref{P1}) can be reformulated
as
\begin{equation}\label{P2}
\tag{$\mathcal P_2$}
\begin{aligned}
\max_{\sigma^2,J}\ &\ J\eta+\sum_{j=1}^J\frac{\sigma_j^2\varepsilon^2}{\sigma_j^2+\varepsilon^2}\\
\text{such that}\ &\ \frac{1}{2}\sum_{j=1}^J\log_+\left(\frac{\sigma_j^4}{\eta(\sigma_j^2+\varepsilon^2)}\right)= B_\eps\\
&\ \sum_{j=1}^J a_j^2\sigma_j^2\leq \frac{c^2}{\pi^{2m}}\\
&\ (\sigma_j^2) \text{ is decreasing and
}\frac{\sigma_J^4}{\sigma_J^2+\varepsilon^2} \geq \eta.
\end{aligned}
\end{equation}

To derive the solution to (\ref{P2}), we use a continuous approximation of $\sigma^2$, writing
\begin{equation*}
\sigma^2_j=\sigma^2(jh)h^{2m+1}
\end{equation*}
where $h$ is the bandwidth to be specified and $\sigma^2(\cdot)$ is a
function defined on $(0,\infty)$. 
The constraint that $\sum_{j=1}^\infty a_j^2\sigma^2_j\leq \frac{c^2}{\pi^{2m}}$
becomes the integral constraint \cite{nussbaum1999minimax}
\begin{equation*}
\int_0^\infty x^{2m}\sigma^2(x)dx\leq \frac{c^2}{\pi^{2m}}.
\end{equation*}
We now set the bandwidth so that $h^{2m+1}=\varepsilon^2$.  This
choice of bandwidth will balance the two terms in the objective
function, and thus gives the hardest prior distribution. Applying the
above three observations under this continuous approximation, we transform
problem (\ref{P2}) to the following optimization:
\begin{equation}
\tag{$\mathcal P_3$}\label{P3}
\begin{aligned}
\quad \max_{\sigma^2,x_0}\ &\ x_0\eta+\int_0^{x_0}\frac{\sigma^2(x)}{\sigma^2(x)+1}dx\\
\text{such
  that}\ &\ \int_0^{x_0}\frac{1}{2}\log_+\left(\frac{\sigma^4(x)}{\eta(\sigma^2(x)+1)}
\right)= d\\
&\ \int_0^{x_0} x^{2m}\sigma^2(x)dx\leq \frac{c^2}{\pi^{2m}}\\
&\ \sigma^2(x)\text{ is decreasing and }\frac{\sigma^4(x)}{\sigma^2(x)+1}\geq\eta\text{ for all }x\leq x_0.
\end{aligned}
\end{equation}
Note that here we omit the convergence rate $h^{2m}=\varepsilon^{\frac{4m}{2m+1}}$ in the objective function.
The asymptotic equivalence between \eqref{P2} and \eqref{P3} can be established by a similar argument 
to Theorem 3.1 in \cite{donoho1997wald}.
Solving the first constraint for $\eta$ yields
\begin{equation}\label{P4}
\begin{aligned}
\quad \max_{\sigma^2,x_0}\ &\ \int_0^{x_0}\frac{\sigma^2(x)}{\sigma^2(x)+1}dx+x_0\exp\left(\frac{1}{x_0}\int_0^{x_0}\log\frac{\sigma^4(x)}{\sigma^2(x)+1}dx-\frac{2d}{x_0}\right)\\
\text{such that}\ &\ \int_0^{x_0} x^{2m}\sigma^2(x)dx\leq \frac{c^2}{\pi^{2m}}\\
&\ \sigma^2(x)\text{ is decreasing}\\
&\ \frac{\sigma^4(x)}{\sigma^2(x)+1}\geq
\exp\left(\frac{1}{x_0}\int_0^{x_0}\log\frac{\sigma^4(x)}{\sigma^2(x)+1}dx-\frac{2d}{x_0}\right)\\
&\  \qquad \text{ for all }x\leq x_0.
\end{aligned}
\tag{$\mathcal P_4$}
\end{equation}

The following is proved using a variational argument in the supplementary material.

\begin{lemma}\label{lem_variational}
The solution to (\ref{P4}) satisfies
\begin{equation*}
\frac{1}{(\sigma^2(x)+1)^2}+\exp\left(\frac{1}{x_0}\int_0^{x_0}\log\frac{\sigma^4(x)}{\sigma^2(x)+1}dx-\frac{2d}{x_0}\right)\frac{\sigma^2(x)+2}{\sigma^2(x)(\sigma^2(x)+1)}=\lambda x^{2m}
\end{equation*}
for some $\lambda > 0$.
\end{lemma}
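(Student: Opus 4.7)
The statement is a first-order stationarity condition for the optimization $(\mathcal P_4)$, so the natural approach is to form a Lagrangian with multiplier $\lambda$ for the Sobolev-type constraint $\int_0^{x_0}x^{2m}\sigma^2(x)\,dx\le c^2/\pi^{2m}$ and compute the functional derivative with respect to $\sigma^2(\cdot)$. The monotonicity constraint and the reverse water-filling constraint $\sigma^4(x)/(\sigma^2(x)+1)\ge \exp(\cdots)$ should both be verified to be inactive on the interior of the relevant interval, so no Lagrange multipliers are required for them; the resulting stationarity condition will hold pointwise on $(0,x_0)$.

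Concretely, I would introduce the shorthand
\[
E \;=\; \frac{1}{x_0}\int_0^{x_0}\log\frac{\sigma^4(y)}{\sigma^2(y)+1}\,dy-\frac{2d}{x_0},
\]
so that the objective becomes $\int_0^{x_0}\sigma^2/(\sigma^2+1)\,dx + x_0 e^{E}$. The first term has a local integrand, with pointwise derivative with respect to $\sigma^2$ equal to $1/(\sigma^2+1)^2$. The second term is non-local through $E$, but the functional derivative reduces to a local expression because
\[
\frac{\delta E}{\delta\sigma^2(x)} \;=\; \frac{1}{x_0}\,\frac{d}{d\sigma^2}\!\left[2\log\sigma^2-\log(\sigma^2+1)\right] \;=\; \frac{1}{x_0}\cdot\frac{\sigma^2(x)+2}{\sigma^2(x)(\sigma^2(x)+1)},
\]
so that $\delta(x_0 e^{E})/\delta \sigma^2(x) = e^{E}(\sigma^2(x)+2)/[\sigma^2(x)(\sigma^2(x)+1)]$. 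Adding the Lagrange term $-\lambda x^{2m}$ and setting the total pointwise derivative to zero gives exactly the identity in the lemma.

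The only genuinely delicate step is justifying that the inequality side-constraints do not contribute. For the Sobolev constraint, scaling considerations show it is active at the optimum (otherwise one could increase every $\sigma^2$ by a small amount and raise the objective), which furnishes the multiplier $\lambda>0$. For the reverse water-filling inequality, one argues that if it were strict on a set of positive measure, an interior perturbation of $\sigma^2$ supported on that set would be admissible in both directions, so the same stationarity relation would hold; and on the set where it is tight, $\sigma^2(x)$ is determined by $E$ itself and the condition is consistent with the Euler--Lagrange equation. The monotonicity constraint is handled by the observation already made in the derivation of $(\mathcal P_2)$: if a stationary $\sigma^2$ were not decreasing one could rearrange it without changing either the objective (which depends on $\sigma^2$ only through symmetric integrals) or the constraint $\int x^{2m}\sigma^2\,dx$ in a way that weakly decreases its value, so monotonicity can be imposed a posteriori.

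The main obstacle I anticipate is bookkeeping the non-local variation of $e^E$ cleanly and confirming that the boundary term at $x=x_0$ (which involves varying $x_0$ itself) does not produce extra contributions in the pointwise equation --- the $x_0$ variation yields a separate transversality condition but does not alter the Euler--Lagrange identity for $\sigma^2(x)$ with $x<x_0$. Once those points are addressed, the lemma follows directly from the computation above.
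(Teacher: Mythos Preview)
Your proposal is correct and follows essentially the same approach as the paper: compute the first variation of the objective with respect to $\sigma^2(\cdot)$ and balance it against the constraint $\int_0^{x_0}x^{2m}\sigma^2(x)\,dx\le c^2/\pi^{2m}$ to obtain the pointwise Euler--Lagrange identity. The only cosmetic difference is that the paper does not form a Lagrangian explicitly; instead it considers admissible perturbations $v$ with $\int_0^{x_0}x^{2m}v(x)\,dx\le0$, derives the variational inequality $\int_0^{x_0}g(x)v(x)\,dx\le0$ for the functional derivative $g$, and then invokes a short auxiliary lemma (if $\int fv\le0\Rightarrow\int gv\le0$ for all $v$, then $g=\lambda f$) to extract the multiplier --- which is exactly the KKT argument you describe, phrased in variational-inequality language. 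Your derivative computations for both the local term $\sigma^2/(\sigma^2+1)$ and the nonlocal term $x_0e^{E}$ match the paper's, and your side discussion of the monotonicity and water-filling constraints is more than the paper actually provides in its proof.
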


Fixing $x_0$, the lemma shows that by setting
\begin{equation*}
\alpha=\exp\left(\frac{1}{x_0}\int_0^{x_0}\log\frac{\sigma^4(x)}{\sigma^2(x)+1}dx-\frac{2d}{x_0}\right)
\end{equation*}
we can express $\sigma^2(x)$ implicitly as the unique positive
root of a third-order polynomial in $y$,
\begin{equation*}
\lambda x^{2m}y^3+(2\lambda x^{2m}-\alpha)y^2+(\lambda x^{2m}-3\alpha-1)y-2\alpha.
\end{equation*}
This leads us to an explicit form of $\sigma^2(x)$ for a given value $\alpha$.
However, note that $\alpha$ still depends on $\sigma^2(x)$ and $x_0$, 
so the solution $\sigma^2(x)$ might not be compatible with $\alpha$ and $x_0$.
We can either search through a grid of values of $\alpha$ and $x_0$,
or, more efficiently, use an iterative method to find the pair of values that gives us the solution. 
We omit the details on how to calculate the values of the optimization as it is not main 
purpose of the paper. 

To summarize, in the regime $B_\eps\eps^{\frac{2}{2m+1}}\to d$ as
$\varepsilon\to 0$,
we obtain 
\begin{equation*}
V_\eps(m, c, B_\eps)=(\mathsf P_{m,c} + \mathsf Q_{m,c,d})\,\varepsilon^{\frac{4m}{2m+1}}(1+o(1)),
\end{equation*}
where we denote by $\mathsf P_{m,c} + \mathsf Q_{m,c,d}$ the 
values of the optimization (\ref{P4}).
\end{proof}


\section{Achievability} 
\label{sec:achievability}
In this section, we show that the lower bounds in Theorem
\ref{thm_lowerbound} are achievable by a quantized estimator using a
random coding scheme. The basic idea of our quantized estimation procedure is to
conduct blockwise estimation and quantization together, using a
quantized form of James-Stein estimator.

Before we present a quantized form of the James-Stein estimator, 
let us first consider a class of simple procedures. 
Suppose that $\hat \theta=\hat\theta(X)$ is an estimator of $\theta\in\Theta(m,c)$ 
without quantization. We assume that $\hat\theta\in\Theta(m,c)$, as projection
always reduces mean squared error. 
To design a $B$-bit quantized estimator, let $\check\Theta$ 
be the optimal $\delta$-covering of the parameter space $\Theta(m,c)$ such that $|\check\Theta|\leq 2^B$,
that is,
\[
\delta = \delta(B) = \inf_{\check\Theta\subset\Theta:|\check\Theta|\leq 2^B}\sup_{\theta\in\Theta}\inf_{\theta'\in\Check\Theta}\|\theta-\theta'\|.
\] 
The quantized estimator is then defined to be
\[
\check\theta=\check\theta(X) = \argmin_{\theta'\in\check\Theta}\|\hat\theta(X)-\theta'\|.
\]
Now the mean squared error satisfies
\begin{align*}
\E_\theta\|\check\theta-\theta\|^2&=\E_\theta\|\check\theta-\hat\theta+\hat\theta-\theta\|^2
\leq 2\E_\theta\|\hat\theta-\theta\|^2+2\E_\theta\|\check\theta-\hat\theta\|^2
\leq 2\sup_{\theta'}\E_{\theta'}\|\hat\theta-\theta'\|^2+2\delta(B)^2.
\end{align*}
If we pick $\hat\theta$ to be a minimax estimator for $\Theta$, 
the first term above gives the minimax risk for estimating $\theta$ in the parameter space $\Theta$. 
The second term is closely related to the metric entropy of the parameter space $\Theta(m,c)$.
In fact, for the Sobolev ellipsoid $\Theta(m,c)$, it is shown in \cite{donoho1997wald} that
$\delta(B)^2=\frac{c^2m^{2m}}{\pi^{2m}}B^{-2m}(1+o(1))$ as $B\to\infty$.
Thus, with an extra constant factor of 2, the mean squared error of this quantized estimator 
is decomposed into the minimax risk for $\Theta$ and an error term due to quantization.
In addition to the fact that this procedure does not achieve the exact lower bound
of the minimax risk for the constrained estimation problem, it is not clear how such an $\varepsilon$-net
can be generated. In what follows we will describe a quantized estimation procedure 
that we will show achieves the lower bound with the exact constants,
and that also adapts to the unknown parameters of the Sobolev space.

We begin by defining the block system to be used, which is usually referred to as the
\emph{weakly geometric system of blocks} \cite{tsybakov2008introduction}. 
Let $N_\eps=\lfloor 1/\eps^2\rfloor$ and $\rho_\varepsilon=(\log(1/\varepsilon))^{-1}$. 
Let $J_1,\dots,J_K$ be a partition of the set $\{1,\dots,N_\eps\}$ such that
\begin{align*}
\bigcup_{k=1}^K J_k=\{1,\dots,N_\eps\},\quad J_{k_1}\cap J_{k_2}=\emptyset\text{ for } k_1\neq k_2,\\
\text{and }\min\{j:j\in J_k\}>\max\{j:j\in J_{k-1}\}.
\end{align*}
Let $T_k$ be the cardinality of the $k$th block and suppose that $T_1,\dots,T_k$ satisfy
\begin{equation}\label{wgb}\begin{aligned}
T_1&=\lceil\rho_\varepsilon^{-1}\rceil=\lceil\log(1/\varepsilon)\rceil,\\
T_2&=\lfloor T_1(1+\rho_\varepsilon)\rfloor,\\
&\vdots\\
T_{K-1}&=\lfloor T_1(1+\rho_\varepsilon)^{K-2}\rfloor,\\
T_K&=N_\eps-\sum_{k=1}^{K-1}T_k.
\end{aligned}\end{equation}
Then $K \leq C\log^2(1/\eps)$ (see Lemma~\ref{lem:wgsb}).
For an infinite sequence $x\in\ell_2$, denote by $x_{(k)}$ the vector $(x_j)_{j\in J_k}\in\mathbb R^{T_k}$. We also write $j_k=\sum_{l=1}^{k-1}T_l+1$, which is the smallest index in block $J_k$.
The weakly geometric system of blocks is defined such that the size
of the blocks does not grow too quickly (the ratio between the sizes of the neighboring two 
blocks goes to 1 asymptotically), and that the number of the blocks 
is on the logarithmic scale with respect to $1/\varepsilon$ ($K\lesssim\log^2(1/\varepsilon)$).
See Lemma \ref{lem:wgsb}.

We are now ready to describe the quantized estimation scheme.  We
first give a high-level description of the scheme, and then the
precise specification. In contrast to rate distortion theory, where
the codebook and allocation of the bits are determined once the source
distribution is known, here the codebook and allocation of bits are
adaptive to the data---more bits are used for blocks having larger
signal size.  The first step in our quantization scheme is to
construct a ``base code'' of $2^{B_\eps}$ randomly generated vectors
of maximum block length $T_K$, with $\mathcal N(0,1)$ entries.  The
base code is thought of as a $2^{B_\eps} \times T_K$ random matrix
$\mathcal Z$; it is generated before observing any data, and is shared
between the sender and receiver.  After observing data $(Y_j)$, the
rows of $\mathcal Z$ are apportioned to different blocks $k=1,\ldots,
K$, with more rows being used for blocks having larger estimated
signal size.  To do so, the norm $\|Y_{(k)}\|$ of each block $k$ is
first quantized as a discrete value $\check S_k$.  A subcodebook
$\mathcal Z_k$ is then constructed by normalizing the
appropriate rows and the first $T_k$ columns of the base code,
yielding a collection of random points on the unit
sphere $\mathbb S^{T_k-1}$.  To form a quantized estimate of the
coefficients in the block, the codeword $\check Z_{(k)} \in
\mathcal Z_k$ having the smallest angle to $Y_{(k)}$ is then found.  The appropriate
indices are then transmitted to the receiver.  To decode and
reconstruct the quantized estimate, the receiver first recovers the
quantized norms $(\check S_k)$, which enables reconstruction of the
subdivision of the base code that was used by the encoder. After
extracting for each block $k$ the appropriate row of the base code,
the codeword $\check Z_{(k)}$ is reconstructed, and a
James-Stein type estimator is then calculated.

The quantized estimation scheme is detailed below.
\begin{enumerate}[{\sc Step} 1.]\setlength{\itemsep}{3pt}
\item \textit{Base code generation.}
\begin{enumerate}[{1.}1.]
\item Generate codebook $\mathcal S_k=\bigl\{\sqrt{T_k\eps^2}+i\eps^2:\ i=0,1,\dots,s_k\bigr\}$ where $s_k=\left\lceil\eps^{-2}c(j_k\pi)^{-m}\right\rceil$, for $k=1,\dots, K$.
\item Generate base code $\mathcal Z$, a $2^B\times T_K$ matrix with
  i.i.d.\ $\mathcal N(0,1)$ entries. 
\end{enumerate}
$(\mathcal S_k)$ and $\mathcal Z$ are shared between the encoder and the decoder, before seeing any data. 
\item \textit{Encoding.}
\begin{enumerate}[2.1.]
\item \textit{Encoding block radius.}
For $k=1,\dots, K$, encode \\
$\check S_k=\arg\min\left\{|s-S_k|:s\in\mathcal S_k\right\}$ where
\begin{equation*}
S_k=\begin{cases}
\sqrt{T_k\eps^2}& \text{if }\|Y_{(k)}\|<\sqrt{T_k\eps^2}\\
\sqrt{T_k\eps^2}+c(j_k\pi)^{-m} & \text{if }\|Y_{(k)}\|>\sqrt{T_k\eps^2}+c(j_k\pi)^{-m}\\
\|Y_{(k)}\|& \text{otherwise.}
\end{cases}
\end{equation*}
\item \textit{Allocation of bits.}
Let $(\tilde b_k)_{k=1}^K$ be the solution to the optimization
\begin{equation}\begin{aligned}
\min_{\bar b}\ &\ \sum_{k=1}^K\frac{(\check S_k^2-T_k\eps^2)^2}{\check S_k^2}\cdot 2^{-2 \bar b_k}\label{allocationproblem}\\
\text{such that}\ &\ \sum_{k=1}^K T_k\bar b_k\leq B,\ \bar b_k\geq 0.
\end{aligned}
\end{equation}
\item \textit{Encoding block direction.}
Form the data-dependent codebook as follows. 

Divide the rows of $\mathcal Z$ into blocks of sizes $2^{\lceil T_1\tilde b_1\rceil},\dots, 2^{\lceil T_K\tilde b_K\rceil}$. 
Based on the $k$th block of rows, construct the data-dependent codebook $\tilde {\mathcal Z}_k$ by keeping only the first $T_k$ entries and normalizing each truncated row; specifically, the $j$th row of $\tilde {\mathcal Z}_k$ is given by
\[
\tilde {\mathcal Z}_{k,j}=\frac{\mathcal Z_{i,1:T_k}}{\|\mathcal Z_{i,1:T_k}\|}\in\mathbb S_{T_k-1}
\]
where $i$ is the appropriate row of the base code $\mathcal Z$ and $\mathcal Z_{i,1:t}$ denotes the first $t$ entries of the row vector. A graphical illustration is shown below in Figure \ref{fig_codebook}. 

With this data-dependent codebook, encode 
\[
\check Z_{(k)}=\argmax\{\langle z,Y_{(k)}\rangle:z\in\tilde{\mathcal Z}_k\}
\]
for $k=1,\dots,K$.

\vskip20pt
\begin{figure}[H]
\begin{center}
\includegraphics{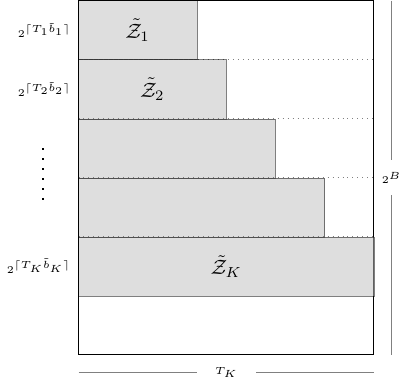}
\end{center}
\caption{An illustration of the data-dependent codebook. The big matrix represents the base code $\mathcal Z$, and the shaded areas are $(\tilde{\mathcal Z}_k)$, sub-matrices of size $T_k\times 2^{\lceil T_k\tilde b_k\rceil}$ with rows normalized.}\label{fig_codebook}
\end{figure}

\end{enumerate}

\item \textit{Transmission}.  Transmit or store $(\check S_k)_{k=1}^K$ and $(\check Z_{(k)})_{k=1}^K$ by their corresponding indices.
\item \textit{Decoding \& Estimation.} 
\begin{enumerate}[4.1.]
\item Recover $(\check S_k)$ based on the transmitted or stored indices and the common codebook $(\mathcal S_k)$.
\item Solve (\ref{allocationproblem}) and get $(\tilde b_k)$. Reconstruct $(\tilde{\mathcal Z}_k)$ using $\mathcal Z$ and $(\tilde b_k)$.
\item Recover $(\check Z_{(k)})$ based on the transmitted or stored indices and the reconstructed codebook $(\tilde{\mathcal Z}_k)$.
\item Estimate $\theta_{(k)}$ by
\begin{equation*}
\check \theta_{(k)}=\frac{\check S_k^2-T_k\eps^2}{\check S_k}\sqrt{1-2^{-2\tilde b_k}}\cdot \check Z_{(k)}.
\end{equation*}
\item Estimate the entire vector $\theta$ by concatenating the
  $\check\theta_{(k)}$ vectors and padding with zeros; thus,
\begin{equation*}
\check\theta = \left(\check\theta_{(1)},\dots,\check\theta_{(K)},0,0,\dots\right).
\end{equation*}
\end{enumerate}
\end{enumerate}

The following theorem establishes the asymptotic optimality of this
quantized estimator.

\begin{theorem}\label{thm_upperbound} Let $\check\theta$ be the quantized estimator defined above. 
\begin{enumerate}[(i)]
\item If $B\varepsilon^{\frac{2}{2m+1}}\to \infty$, then 
\begin{equation*}
\lim_{\varepsilon\to 0}\,\varepsilon^{-\frac{4m}{2m+1}}\sup_{\theta\in\Theta(m,c)}\mathbb E\|\theta-\check\theta\|^2= \mathsf P_{m,c}.
\end{equation*}
\item If $B\varepsilon^{\frac{2}{2m+1}}\to d$ for some constant $d$ as $\varepsilon\to0$, then
\begin{equation*}
\lim_{\varepsilon\to 0}\,\varepsilon^{-\frac{4m}{2m+1}}\sup_{\theta\in\Theta(m,c)}\mathbb E\|\theta-\check\theta\|^2= \mathsf P_{m,c}+\mathsf Q_{d,m,c}.
\end{equation*}
\item If $B\eps^{\frac{2}{2m+1}}\to 0$ and $B(\log(1/\eps))^{-3}\to\infty$, then 
\begin{equation*}
\lim_{\varepsilon\to 0}\,B^{2m}\sup_{\theta\in\Theta(m,c)}\mathbb E\|\theta-\check\theta\|^2=\frac{c^2m^{2m}}{\pi^{2m}}.
\end{equation*}
\end{enumerate}
The expectations are with respect to the random quantized estimation scheme $Q$ and the distribution of the data. 
\end{theorem}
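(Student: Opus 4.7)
The plan is to analyze $\E\|\check\theta-\theta\|^2$ block by block, matching the per-block risk of $\check\theta_{(k)}$ against the unquantized blockwise James--Stein estimator $\hat\theta^{JS}_{(k)} = (1 - T_k\eps^2/\|Y_{(k)}\|^2)_+ Y_{(k)}$ plus a direction-quantization error coming from the random spherical code. First I would write
\begin{equation*}
\E\|\check\theta_{(k)} - \theta_{(k)}\|^2 = \E\|\hat\theta^{JS}_{(k)} - \theta_{(k)}\|^2 + \E\|\check\theta_{(k)} - \hat\theta^{JS}_{(k)}\|^2 + 2\,\E\bigl\langle\check\theta_{(k)}-\hat\theta^{JS}_{(k)},\,\hat\theta^{JS}_{(k)}-\theta_{(k)}\bigr\rangle
\end{equation*}
and show that the cross term is negligible because, conditional on $Y_{(k)}$, $\check\theta_{(k)}$ is (up to $O(\eps^2)$ from the quantization of $\|Y_{(k)}\|$ by $\check S_k$) a scalar multiple of a codeword whose randomness is independent of $\theta_{(k)}$. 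Summed over the weakly geometric blocks, the first term delivers the classical blockwise James--Stein risk, which achieves $\mathsf P_{m,c}\,\eps^{4m/(2m+1)}(1+o(1))$ uniformly on $\Theta(m,c)$ by the standard argument in Tsybakov.

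For the per-block quantization term I would invoke the classical random spherical coding bound: if $\tilde{\mathcal Z}_k$ consists of $M_k = 2^{\lceil T_k\tilde b_k\rceil}$ i.i.d.\ uniform points on $\mathbb S^{T_k-1}$ and $u = Y_{(k)}/\|Y_{(k)}\|$ is independent of $\tilde{\mathcal Z}_k$, then $\E\bigl[\max_{z\in\tilde{\mathcal Z}_k}\langle z,u\rangle^2\bigr] \geq 1 - 2^{-2\tilde b_k}(1+o(1))$ as $T_k\to\infty$. For any target magnitude $r>0$, the scalar $\alpha\check Z_{(k)}$ that best approximates $ru$ is $\alpha = r\sqrt{1 - 2^{-2\tilde b_k}}$, with residual distortion $r^2\cdot 2^{-2\tilde b_k}$. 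Taking $r = (\check S_k^2 - T_k\eps^2)/\check S_k$ recovers exactly the scalar appearing in the definition of $\check\theta_{(k)}$ and gives
\begin{equation*}
\E\|\check\theta_{(k)} - \hat\theta^{JS}_{(k)}\|^2 \leq \frac{(\check S_k^2 - T_k\eps^2)^2}{\check S_k^2}\,2^{-2\tilde b_k}(1+o(1)),
\end{equation*}
up to negligible contributions from $|\check S_k - \|Y_{(k)}\||\leq\eps^2$ and from the truncation $S_k\in[\sqrt{T_k\eps^2},\sqrt{T_k\eps^2}+c(j_k\pi)^{-m}]$, which is harmless because $\theta\in\Theta(m,c)$ forces $\|\theta_{(k)}\|\leq c(j_k\pi)^{-m}$.

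Summing over the $K = O(\log^2(1/\eps))$ blocks, the total risk is bounded by the blockwise James--Stein Bayes risk plus the objective minimized by the allocation (\ref{allocationproblem}). The KKT conditions of (\ref{allocationproblem}) reproduce the reverse water-filling from the proof of Theorem \ref{thm_lowerbound}: with $h = \eps^{2/(2m+1)}$ and the continuous substitution $\sigma^2(j_k h) = \|\theta_{(k)}\|^2/(T_k h^{2m+1})$, the optimality conditions for $(\tilde b_k)$ coincide in the limit with those for (\ref{P4}). Hence in the over-sufficient regime $B\eps^{2/(2m+1)}\to\infty$ the direction-quantization term vanishes and only Pinsker's constant remains; in the sufficient regime $B\eps^{2/(2m+1)}\to d$ the sum matches the value of (\ref{P4}), namely $\mathsf P_{m,c} + \mathsf Q_{m,c,d}$; and in the insufficient regime $B\eps^{2/(2m+1)}\to 0$ the quantization term dominates and Lemma \ref{lem_v2} yields $c^2 m^{2m}/\pi^{2m}\cdot B^{-2m}(1+o(1))$.

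The main obstacle is showing that the data-driven $(\tilde b_k)$ and $(\check S_k)$ realize the variational optimum \emph{uniformly} over $\theta\in\Theta(m,c)$ and across the three regimes. This requires $\chi^2$-concentration of $\|Y_{(k)}\|^2$ around $\|\theta_{(k)}\|^2 + T_k\eps^2$ with $o(1)$ relative error simultaneously for all $K$ blocks (achievable because $T_k\gtrsim\log(1/\eps)$ combined with a union bound controls $K=O(\log^2(1/\eps))$ deviations), and a careful argument that the integer rounding $\lceil T_k\tilde b_k\rceil$ and the deviation of $\max\langle z,u\rangle^2$ from $1-2^{-2\tilde b_k}$ are absorbed into the $(1+o(1))$ factor. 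The assumption $B(\log(1/\eps))^{-3}\to\infty$ in part (iii) is precisely what keeps these per-block overhead terms negligible compared to $B^{-2m}$ in the insufficient regime.
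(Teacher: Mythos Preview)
Your overall architecture matches the paper's closely: decompose each block around the James--Stein direction, use a random spherical coding bound for the quantization piece, and relate the resulting blockwise sum to the max--min problem \eqref{P1}. Two places in your sketch, however, do not go through as written.

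First, the cross term. Your reason---that conditional on $Y_{(k)}$ the codeword randomness is independent of $\theta_{(k)}$---does not make $\E\langle\check\theta_{(k)}-\hat\theta^{JS}_{(k)},\hat\theta^{JS}_{(k)}-\theta_{(k)}\rangle$ vanish. Conditional on $Y_{(k)}$, $\E[\check Z_{(k)}\mid Y_{(k)}]$ is parallel to $Y_{(k)}$ with length roughly $\sqrt{1-2^{-2\tilde b_k}}$, so $\E[\check\theta_{(k)}\mid Y_{(k)}]-\hat\theta^{JS}_{(k)}$ is a nonzero multiple of $Y_{(k)}/\|Y_{(k)}\|$, and its inner product with $\hat\theta^{JS}_{(k)}-\theta_{(k)}$ is not zero. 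The paper splits the cross term further: the component of $\hat\theta^{JS}_{(k)}-\theta_{(k)}$ orthogonal to $Y_{(k)}$ pairs with the rotationally symmetric part of $\check\theta_{(k)}-\hat\theta^{JS}_{(k)}$ and has conditional mean zero; the component parallel to $Y_{(k)}$ does not vanish and is controlled via Cauchy--Schwarz together with the oracle bound $\E\bigl(\tfrac{S_k^2-T_k\eps^2}{S_k}-\tfrac{\langle\theta_{(k)},Y_{(k)}\rangle}{\|Y_{(k)}\|}\bigr)^2\leq C_0\eps^2$ (Lemma~\ref{lem_oracle1}). That lemma is what makes the cross term $O(\sqrt{K}\eps\sqrt{M})$ rather than of the same order as the main term.

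Second, the uniformity step. You propose $\chi^2$-concentration to show the data-driven $(\tilde b_k)$ attains the variational optimum uniformly over $\Theta(m,c)$. This is fragile: blocks with $\|\theta_{(k)}\|^2\ll T_k\eps^2$ have $\|Y_{(k)}\|^2-T_k\eps^2$ fluctuating on the scale $\sqrt{T_k}\eps^2$, so the \emph{relative} error on the estimated signal $\|\theta_{(k)}\|^2$ is not $o(1)$ there, and you cannot uniformly track the optimal allocation blockwise. The paper avoids concentration entirely: since $(\tilde b_k)$ minimizes the objective in \eqref{allocationproblem}, one can replace it by any \emph{deterministic} $(\bar b_k)\in\Pi_{\text{blk}}(B)$ before taking expectation over $Y$, apply the expectation bounds of Lemmas~\ref{lem_oracle2}--\ref{lem_oracle3} term by term, and only then minimize over $\bar b$. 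This swap of order (min inside, expectation outside, then min over deterministic allocations) is what delivers the supremum over $\theta$ without any high-probability control.
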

We pause to make several remarks on this result before outlining the proof.
\begin{remark}
The total number of bits used by this quantized estimation scheme is 
\begin{align*}
\sum_{k=1}^K \lceil T_k \tilde b_k \rceil +\sum_{k=1}^K
\log\lceil\eps^{-2}c(j_k\pi)^{-m}\rceil & \leq \sum_{k=1}^K \lceil T_k
\tilde b_k \rceil +\sum_{k=1}^K\log\lceil\eps^{-2}c\rceil\\
& {} \leq B+ K + 2K\rho_\eps^{-1}+K\log\lceil c\rceil\\
& {} =B+O((\log(1/\eps))^{3}),
\end{align*}
where we use the fact that $K\lesssim \log^2(1/\varepsilon^2)$ (See Lemma \ref{lem:wgsb}).
Therefore, as long as $B(\log(1/\eps))^{-3}\to\infty$, the total
number of bits used is asymptotically no more than $B$, the given communication budget.
\end{remark}
\begin{remark}
The quantized estimation scheme does not make essential use of the
parameters of the Sobolev space, namely the smoothness $m$ and the
radius $c$. The only exception is that in Step 1.1 the size of the
codebook $\mathcal S_k$ depends on $m$ and $c$. However, suppose that
we know a lower bound on the smoothness $m$, say $m\geq m_0$, and an
upper bound on the radius $c$, say $c\leq c_0$. By replacing $m$ and
$c$ by $m_0$ and $c_0$ respectively, we make the codebook independent
of the parameters.  We shall assume $m_0 >1/2$, which leads 
to continuous functions.
This modification does not, however, 
significantly increase the number of bits; in fact, the total number of
bits is still $B+O(\rho_\eps^{-3})$. Thus, we can easily make this quantized
estimator minimax adaptive to the class of Sobolev ellipsoids
$\{\Theta(m,c):m\geq m_0,\; c\leq c_0\}$, as long as $B$ grows faster
than $(\log(1/\eps))^3$.  More formally, we have
\begin{corollary}
Suppose that $B_\eps$ satisfies
$B_\eps(\log(1/\eps))^{-3}\to\infty$. Let $\check\theta'$ be the
quantized estimator with the modification described above, which does
not assume knowledge of $m$ and $c$. Then for
$m\geq m_0$ and $c\leq c_0$,
\begin{equation*}
\lim_{\eps\to 0}\frac{\sup_{\theta\in\Theta(m,c)}\mathbb E\|\theta-\check\theta'\|^2}{\inf_{\hat\theta,C(\hat\theta)\leq B}\sup_{\theta\in\Theta(m,c)}\mathbb E\|\theta-\hat\theta\|^2}= 1,
\end{equation*}
where the expectation in the numerator is with respect to the data and
the randomized coding scheme, while the expectation in the denominator is only with respect to the data. 
\end{corollary}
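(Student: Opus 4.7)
The plan is to reduce the corollary to Theorem~\ref{thm_upperbound} by showing that the modification of Step 1.1 (using $m_0, c_0$ in place of the unknown $m, c$) leaves the analysis of the quantized estimator essentially intact. The key observation is that only Steps 1.1 and 2.1 of the scheme depend on $(m,c)$; the base code $\mathcal Z$, the data-adaptive bit allocation $(\tilde b_k)$, the directional encoding, and the decoding and estimation steps are all either data-driven or independent of $(m,c)$. Replacing $(m,c)$ by $(m_0,c_0)$ in the definition of $s_k$ inflates the radius codebook from $s_k = \lceil \eps^{-2} c (j_k\pi)^{-m}\rceil$ to $s_k' = \lceil \eps^{-2} c_0 (j_k\pi)^{-m_0}\rceil$, and widens the truncation interval for $S_k$ from $[\sqrt{T_k\eps^2},\, \sqrt{T_k\eps^2}+c(j_k\pi)^{-m}]$ to $[\sqrt{T_k\eps^2},\, \sqrt{T_k\eps^2}+c_0(j_k\pi)^{-m_0}]$. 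Crucially, the grid spacing on $\mathcal S_k$ is still $\eps^2$, so the per-block radius quantization error $|\check S_k - S_k|$ is still $O(\eps^2)$.

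Next I would verify that the bit budget is preserved. By the same calculation used in the remark preceding the corollary,
\[
\sum_{k=1}^K \log\lceil \eps^{-2} c_0 (j_k\pi)^{-m_0}\rceil \;\leq\; K\log\lceil\eps^{-2}c_0\rceil + O(K) \;=\; O\bigl(\log^{3}(1/\eps)\bigr),
\]
using $K \lesssim \log^{2}(1/\eps)$ from Lemma~\ref{lem:wgsb}. The assumption $B_\eps /(\log(1/\eps))^{3}\to\infty$ then ensures that the total bit count is $B_\eps + O(\log^{3}(1/\eps)) = B_\eps(1+o(1))$, so the adaptive scheme still fits within the prescribed communication budget.

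Then I would show the risk analysis of Theorem~\ref{thm_upperbound} goes through unchanged on $\Theta(m,c)$ for every $m\geq m_0$ and $c\leq c_0$. For such $\theta$, Cauchy--Schwarz and the Sobolev constraint give $\sum_{j\in J_k}\theta_j^{2} \leq (j_k\pi)^{-2m} c^{2} \leq (j_k\pi)^{-2m_0} c_0^{2}$, so the widened truncation window of $\check\theta'$ still contains the true block signal norm with the same high-probability guarantee used for $\check\theta$. Since $(m,c)$ enter the post-radius-encoding stages of the procedure only through the quantities $\check S_k$, the data $Y_{(k)}$, and the shared base code $\mathcal Z$, all block-level bounds used in the proof of Theorem~\ref{thm_upperbound} --- on the directional encoding error, the James--Stein type estimator error, and the oracle rate of the continuous relaxation --- apply verbatim. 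This yields
\[
\sup_{\theta\in\Theta(m,c)}\mathbb E\|\theta - \check\theta'\|^{2} \;=\; R_\eps(m,c,B_\eps)\,(1+o(1)),
\]
and combining with the matching lower bound from Theorem~\ref{thm_lowerbound} establishes the ratio tending to $1$.

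The main obstacle in this plan is confirming that the larger codebook $\mathcal S_k$ and wider truncation window, which do not ``know'' $(m,c)$, do not degrade the sharp constants $\mathsf P_{m,c}$ and $\mathsf Q_{m,c,d}$. This ultimately reduces to two facts: (i) the quantization step in $\mathcal S_k$ is $\eps^{2}$ by construction, independent of $(m_0,c_0)$, so the induced block-radius error contributes at the same order as in the oracle scheme; and (ii) the optimized bit allocation (\ref{allocationproblem}) automatically deposits the bits into those blocks where the estimated signal size $\check S_k^{2}-T_k\eps^{2}$ is large, so the adaptivity to both the smoothness $m$ and the radius $c$ (and to the coding regime of Theorem~\ref{thm_lowerbound}) is built into the procedure rather than into the codebook.
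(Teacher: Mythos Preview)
Your proposal is correct and follows the same line the paper uses: the paper does not give a separate proof of the corollary but states it as an immediate consequence of Theorem~\ref{thm_upperbound} together with the observation in the surrounding remark that replacing $(m,c)$ by $(m_0,c_0)$ in Step~1.1 (and, as you correctly note, the truncation in Step~2.1) only enlarges the radius codebooks, leaves the $\eps^{2}$ grid spacing intact, and inflates the bit count by $O((\log(1/\eps))^{3})$. Your expansion---checking that $\|\theta_{(k)}\|\le c_0(j_k\pi)^{-m_0}$ so Lemmas~\ref{lem_oracle1}--\ref{lem_oracle3} still apply, and that the data-driven allocation \eqref{allocationproblem} carries the adaptivity---is exactly the verification the paper leaves implicit.
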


\end{remark}
\begin{remark}
When $B$ grows at a rate comparable to or slower than
$(\log(1/\eps))^3$, the lower bound is still achievable, just no
longer by the quantized estimator we described above. The main reason
is that when $B$ does not grow faster than $\log(1/\eps)^3$, the
block size $T_1=\lceil\log(1/\eps)\rceil$ is too large.  
The blocking needs to be modified to get achievability in this case.
\end{remark}

\begin{remark}
In classical rate distortion \cite{cover2006elements,gallager1968information}, the probabilistic method applied to a
randomized coding scheme shows the existence of a code achieving 
the rate distortion bounds.  Comparing to
Theorem~\ref{thm_lowerbound},  we see that the expected risk, averaged
over the randomness in the codebook, similarly achieves the quantized
minimax lower bound.  However, note that the average over
the codebook is inside the supremum over the Sobolev space,
implying that the code achieving the bound may vary 
over the ellipsoid.  In other words, while the coding scheme
generates a codebook that is used for different
$\theta$,  it is not known whether there is one code generated by this
randomized scheme that is ``universal,'' and achieves the
risk lower bound with high probability over the ellipsoid.
The existence or non-existence of such ``universal codes'' is an 
interesting direction for further study.
\end{remark}

\begin{remark}
We have so far dealt with the periodic case, i.e., functions in the
periodic Sobolev space $\tilde W(m,c)$ defined in
(\ref{eqn_persobolev}).  For the Sobolev space $W(m,c)$, where the
functions are not necessarily periodic, the lower bound given in
Theorem \ref{thm_lowerbound} still holds, since $\tilde W(m,c)$ is a
subset of the larger class $W(m,c)$.  To extend the achievability
result to $W(m,c)$, we again need to relate $W(m,c)$ to an ellipsoid.
Nussbaum \cite{nussbaum1985spline} shows using spline
theory that the non-periodic space can actually be expressed as an
ellipsoid, where the length of the $j$th principal axis scales as $(\pi^2j)^m$
asymptotically.  Based on this link between $W(m,c)$ and the
ellipsoid, the techniques used here to show achievability apply, 
and since the principal axes scale as in the periodic case,
the convergence rates remain the same.
\end{remark}

\paragraph{Proof of Theorem \ref{thm_upperbound}} We now sketch the
proof of Theorem \ref{thm_upperbound}, deferring the full details to
Section \ref{sec_proof}.  To provide only an informal outline of the
proof, we shall write $A_1\approx A_2$ as a shorthand for $A_1=A_2(1+o(1))$,
and $A_1\lesssim A_2$ for $A_1\leq A_2(1+o(1))$, without specifying
here what these $o(1)$ terms are.

To upper bound the risk $\mathbb E\|\check\theta-\theta\|^2$, we adopt
the following sequence of approximations and inequalities. First, we discard the components whose index is greater than $N$ and show that
\begin{align*}
\hspace{-0.5in}&\mathbb E\|\check\theta-\theta\|^2\approx \mathbb E\sum_{k=1}^K\|\check\theta_{(k)}-\theta_{(k)}\|^2.\\
\intertext{Since $\check S_k$ is close enough to $S_k$, we can then safely replace $\check\theta_{(k)}$ by $\hat\theta_{(k)}=\frac{S_k^2-T_k\eps^2}{S_k}\sqrt{1-2^{-2\tilde b_{(k)}}}\cdot \check Z_{(k)}$ and obtain}
&\approx \mathbb E\sum_{k=1}^K\|\hat\theta_{(k)}-\theta_{(k)}\|^2.\\
\intertext{Writing $\lambda_k = \frac{S_k^2-T_k\eps^2}{S_k^2}$, we further decompose the risk into}
&\begin{aligned}&=\mathbb E\sum_{k=1}^K\Big(\|\hat\theta_{(k)}-\lambda_k Y_{(k)}\|^2+\|\lambda_k Y_{(k)}-\theta_{(k)}\|^2\\
&\hspace{1in}+2\langle\hat\theta_{(k)}-\lambda_k Y_{(k)},\lambda_k Y_{(k)}-\theta_{(k)} \rangle\Big).
\end{aligned}\\
\intertext{Conditioning on the data $Y$ and taking the expectation
  with respect to the random codebook yields}
&\lesssim \mathbb E\sum_{k=1}^K\left(\frac{(S_k^2-T_k\eps^2)^2}{S_k^2}2^{-2\tilde b_k}+\|\lambda_k Y_{(k)}-\theta_{(k)}\|^2\right).\\
\intertext{By two oracle inequalities upper bounding the expectations with respect to the data, and the fact that $\tilde b$ is the solution to \eqref{allocationproblem}, }
&\lesssim\min_{b\in\Pi_{\text{blk}}(B)}\sum_{k=1}^K\left(\frac{\|\theta_{(k)}\|^4}{\|\theta_{(k)}\|^2+T_k\varepsilon^2} 2^{-2 \bar b_{k}}+\frac{\|\theta_{(k)}\|^2T_k\varepsilon^2}{\|\theta_{(k)}\|^2+T_k\varepsilon^2}\right).\\
\intertext{Showing that the blockwise constant oracles are almost as good as the monotone oracle, we get for some $B'\approx B$}
&\lesssim\min_{b\in\Pi_{\text{mon}}(B'),\ \omega\in\Omega_{\text{mon}}}\sum_{j=1}^N\left(\frac{\theta_j^4}{\theta_j^2+\varepsilon^2}2^{-2 b_{j}}+(1-\omega_j)^2\theta_j^2+\omega_j^2\varepsilon^2\right),
\end{align*}
where $\Pi_{\text{blk}}(B)$, $\Pi_{\text{mon}}(B)$ are the classes of blockwise constant and monotone allocations of the bits defined in \eqref{eqn_piblk}, \eqref{eqn_pimon}, and $\Omega_{\text{mon}}$ is the class of monotone weights defined in (\ref{eqn_omegamon}).
The proof is then completed by Lemma \ref{lem_equivalence} showing that the last quantity is equal to $V_\eps(m,c,B)$.

\section{Simulations} \label{sec:experiments}
\begin{figure}[!t]
\begin{center}
\includegraphics{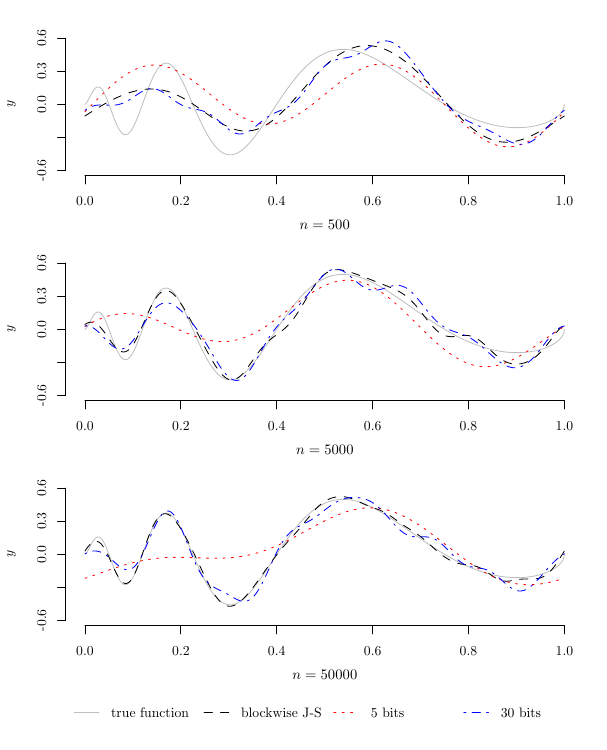} 
\caption{The damped Doppler function (solid gray) and typical realizations of the estimators 
under different noise levels ($n=500$, 5000, and 50000). 
Three estimators are used: the blockwise James-Stein estimator (dashed black),
and two quantized estimator with budgets of 5 bits (dashed red) and 30 bits (dashed blue).
}
\label{fig:sim1}
\end{center}
\end{figure}
\begin{figure}[!t]
\begin{center}
\includegraphics{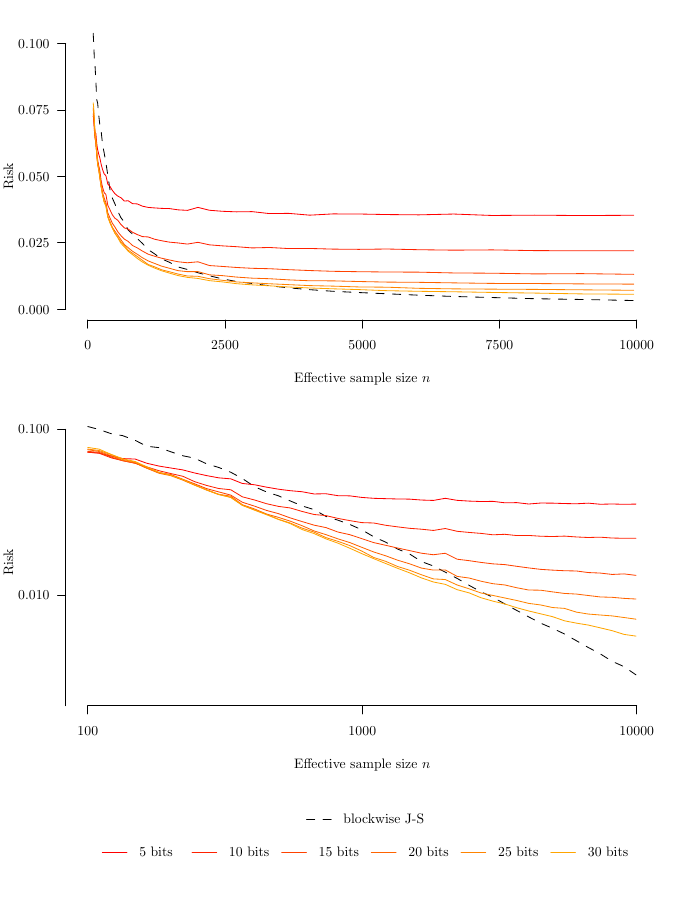} 
\caption{Risk versus effective sample size $n=1/\eps^2$
for estimating the damped Doppler function with different estimators. 
The dashed line represents the risk of the blockwise James-Stein estimator, and the solid ones
are for the quantized estimators with different budgets. 
The budgets are 5, 10, 15, 20, 25, and 30 bits, corresponding to the lines from top to bottom.
The two plots are the same curves on the original scale and the log-log scale.
}
\label{fig:sim2}
\end{center}
\end{figure}

Here we illustrate the performance of the proposed quantized estimation scheme. 
We use the function 
\[
f(x)=\sqrt{x(1-x)}\sin\left(\frac{2.1\pi}{x+0.3}\right),\quad 0\leq x\leq 1,
\]
which we shall refer to as the ``damped Doppler function,'' shown
in Figure \ref{fig:sim1} (the gray lines).  Note that the value 
$0.3$ differs from the value $0.05$ in the usual Doppler 
function used to illustrate spatial adaptation of methods such
as wavelets. Since we do not address spatial adaptivity in this
paper, we ``slow'' the oscillations of the Doppler function near zero
in our illustrations.

We use this $f$ as the underlying true mean function and generate our data according to the corresponding white noise model (\ref{eqn:wnm}),
\[
dX(t)=f(t)dt+\varepsilon dW(t), \quad 0\leq t\leq 1.
\]
We apply the blockwise James-Stein estimator,
as well as the proposed quantized estimator with different communication budgets.
We also vary the noise level $\varepsilon$ and, equivalently, the effective sample size $n=1/\varepsilon^2$.

We first show in Figure \ref{fig:sim1} some typical realizations of
these estimators on data generated under different noise levels
($n=500$, 5000, and 50000 respectively).  To keep the plots succinct,
we show only the true function, the blockwise James-Stein estimates
and quantized estimates using total bit budgets of 5 and 30 bits.  We
observe, in the first plot, that both quantized estimates deviate from
the true function, and so does the blockwise James-Stein estimates.
This is when the noise is relatively large and any quantized estimate
performs poorly, no matter how large a budget is given.  Both 5
bits and 30 bits appear to be ``sufficient/over-sufficient'' here.  In
the second plot, the blockwise James-Stein estimate is close to the
quantized estimate with a budget of 30 bits, while with a
budget of 5 bits it fails to capture the fluctuations of the true
function.  Thus, a budget of 30 bits is still ``sufficient,'' but 5
bits apparently becomes ``insufficient.''  In the third plot, the
blockwise James-Stein estimate gives a better fit than the two
quantized estimates, as both budgets become ``insufficient'' to
achieve the optimal risk.

Next, in Figure \ref{fig:sim2} we plot the risk as
a function of sample size $n$, averaging 
over 2000 simulations. Note that the bottom plot is the just the first plot on a
log-log scale.  In this set of plots, we are able to observe the phase
transition for the quantized estimators.  For relatively small values
of $n$, all quantized estimators yield a similar error rate,
with risks that are close to (or even smaller than) that
of the blockwise James-Stein estimator.  This is the
over-sufficient regime---even the smallest budget suffices to achieve
the optimal risk.  As $n$ increases, the curves start to 
separate, with estimators having smaller bit budgets leading to worse risks
compared to the blockwise James-Stein estimator, and compared
to estimators with larger budgets.  This can be seen as the sufficient regime for 
the small-budget estimators---the risks are still going down, but at a slower
rate than optimal.  The six quantized estimators all end up
in the insufficient regime---as $n$ increases, their risks begin to flatten out,
while the risk of the blockwise James-Stein estimator
continues to decrease.

\section{Related work and future directions}

Concepts related to quantized nonparametric
estimation appear in multiple communities.
As mentioned in the introduction, Donoho's 1997 Wald Lectures \cite{donoho1997wald}
(on the eve of the 50th anniversary of Shannon's 1948 paper),
drew sharp parallels between rate distortion, metric
entropy and minimax rates, focusing on the same Sobolev
function spaces we treat here.  
One view of the present
work is that we take this correspondence further by
studying how the risk continuously degrades with the level of
quantization.  We have analyzed 
the precise leading order asymptotics for quantized regression
over the Sobolev spaces, showing that these rates
and constants are realized with coding schemes 
that are adaptive to the smoothness $m$ and radius $c$ of
the ellipsoid, achieving automatically the optimal rate
for the regime corresponding to those parameters given the specified
communication budget.  Our detailed analysis is possible
due to what Nussbaum \cite{nussbaum1999minimax}
calls the ``Pinsker phenomenon,'' 
refering to the fact that linear filters attain the minimax rate in the
over-sufficient regime.  It will be interesting
to study quantized nonparametric estimation in cases where the Pinsker
phenomenon does not hold, for example over Besov bodies
and different $L_p$ spaces.

Many problems of rate distortion type are similar to quantized
regression.  The standard ``reverse water filling'' construction to
quantize a Gaussian source with varying noise levels plays a key role
in our analysis, as shown in Section~\ref{sec:lowerbound}.  In our
case the Sobolev ellipsoid is an infinite Gaussian sequence model,
requiring truncation of the sequence at the appropriate level
depending on the targeted quantization and estimation error.  In the
case of Euclidean balls, Draper and Wornell \cite{draper2004side}
study rate distortion problems motivated by communication in sensor
networks; this is closely related to the problem of quantized minimax
estimation over Euclidean balls that we analyzed in
\cite{zhu2014quantized}.  The essential difference between rate
distortion and our quantized minimax framework is that in rate
distortion the quantization is carried out for a random source, while
in quantized estimation we quantize our estimate of the deterministic
and unknown basis coefficients.  Since linear estimators are
asymptotically minimax for Sobolev spaces under squared error (the
``Pinsker phenomenon''), this naturally leads to an alternative view
of quantizing the observations, or said differently, of compressing
the data before estimation.

Statistical estimation from compressed data has appeared previously in
different communities.  In \cite{zhou2009compressed} a procedure is analyzed that
compresses data by random linear transformations in the setting of sparse linear
regression. Zhang and Berger \cite{zhang1988estimation} study estimation
problems when the data are communicated from multiple sources; Ahlswede
and Csisz\'ar \cite{ahlswede1986hypothesis} consider testing problems under
communication constraints; the use of 
side information is studied by Ahlswede and Burnashev
\cite{ahlswede1990minimax}; other formulations in terms of multiterminal
information theory are given by Han and Amari \cite{han1998statistical};
nonparametric problems are considered by Raginsky in
\cite{raginsky2007learning}.
In a distributed setting the data may be divided across different
compute nodes, with distributed estimates then aggregated or
pooled by communicating with a central node.
The general ``CEO problem'' of distributed estimation was introduced by
Berger, Zhang and Viswanathan \cite{berger1996ceo}, and has been 
recently studied in parametric settings
in \cite{zhang2013information,garg2014communication}.  
These papers take the view that the data are communicated
to the statistician at a certain rate, which may introduce
distortion, and the goal is to study the degradation
of the estimation error.  In contrast, in our setting we can view
the unquantized data as being fully available to the statistician
at the time of estimation, with communication constraints
being imposed when communicating the estimated model 
to a remote location.

Finally, our quantized minimax analysis shows achievability using
random coding schemes, which are not computationally efficient.  A
natural problem is to develop practical coding schemes that come close
to the quantized minimax lower bounds.  In our view, the most promising approach
currently is to exploit source coding schemes based on greedy sparse
regression~\cite{venkataramanan2013lossy}, applying such techniques
blockwise according to the procedure we developed in
Section~\ref{sec:achievability}.

\section*{Acknowledgements}

Research supported in part by ONR grant N00014-15-1-2379, and NSF
grants DMS-1513594 and DMS-1547396.  The authors thank Andrew Barron,
John Duchi, Maxim Raginsky, Philippe Rigollet, Harrison Zhou,
and the anonymous referees for valuable comments on this work.

\setlength{\bibsep}{8pt}
\bibliographystyle{plain}
\bibliography{bib}

\clearpage

\begin{appendix}

\vskip50pt
\newcommand\numberthis{\addtocounter{equation}{1}\tag{\theequation}}

\section{Proofs of Technical Results}\label{sec_proof} 
In this section, we provide proofs for Theorems \ref{thm_lowerbound} and \ref{thm_upperbound}.
\subsection{Proof of Theorem \ref{thm_lowerbound}} 
We first show
\begin{lemma} 
The quantized  minimax risk is lower bounded by $V_\varepsilon(m,c,B_\eps)$, the value of the optimization \eqref{P1}. 
\end{lemma}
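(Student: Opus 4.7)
The plan is to follow the Bayesian lower-bound strategy outlined in Section~\ref{sec:general} and then replace the general mutual-information constraint by the Shannon lower bound coordinatewise. I would begin by fixing an arbitrary sequence $\sigma^2 = (\sigma_j^2)_{j\geq 1}$ satisfying $\sum_j a_j^2 \sigma_j^2 \leq c^2/\pi^{2m}$ and considering the Gaussian product prior $\pi(\theta;\sigma^2)$ with $\theta_j \sim \mathcal{N}(0,\sigma_j^2)$ independent. Since $\pi$ is not strictly supported on $\Theta(m,c)$, the first step is to argue that it is asymptotically concentrated there: a standard second-moment computation gives $\P_\pi(\theta\notin\Theta(m,c)) \to 0$, so by conditioning on $\Theta(m,c)$ and absorbing the vanishing probability into a $(1+o(1))$ factor, we obtain
\begin{equation*}
\sup_{\theta\in\Theta(m,c)}\E_\theta\|\theta-\hat\theta_\eps\|^2
\geq (1+o(1))\int \E_\theta\|\theta-\hat\theta_\eps\|^2\,d\pi(\theta).
\end{equation*}

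Next I would apply the orthogonality decomposition from \eqref{decomp1}, using that $\theta \to Y \to \hat\theta_\eps$ is a Markov chain, to split the integrated risk as $\E\|\theta-\delta\|^2 + \E\|\delta-\hat\theta_\eps\|^2$, where $\delta_j = \gamma_j Y_j$ with $\gamma_j = \sigma_j^2/(\sigma_j^2+\eps^2)$ is the posterior mean. The first term equals the Bayes risk $\sum_j \sigma_j^2\eps^2/(\sigma_j^2+\eps^2)$. For the second term, since $\hat\theta_\eps$ takes at most $2^{B_\eps}$ values, $H(\hat\theta_\eps)\le B_\eps$, and hence $I(\delta;\hat\theta_\eps) \le B_\eps$. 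Writing $\mu_j^2 := \E(\delta_j - \hat\theta_{\eps,j})^2$, the classical Shannon lower bound on mutual information for a Gaussian source gives
\begin{equation*}
B_\eps \geq I(\delta;\hat\theta_\eps) \geq \sum_{j=1}^\infty I(\delta_j;\hat\theta_{\eps,j})
\geq \sum_{j=1}^\infty \tfrac{1}{2}\log_+\!\left(\frac{\sigma_j^4}{\mu_j^2(\sigma_j^2+\eps^2)}\right),
\end{equation*}
where the middle inequality uses independence of the coordinates of $\delta$ under the product prior.

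Combining these bounds, for any feasible $\sigma^2$ and any quantized estimator satisfying the bit budget, the integrated risk is at least $\sum_j \mu_j^2 + \sum_j \sigma_j^2\eps^2/(\sigma_j^2+\eps^2)$ subject to the two displayed constraints. Taking the infimum over feasible $(\mu_j^2)$ and then the supremum over feasible $(\sigma_j^2)$ yields precisely $V_\eps(m,c,B_\eps)$, establishing
\begin{equation*}
R_\eps(m,c,B_\eps) \geq (1+o(1))\, V_\eps(m,c,B_\eps).
\end{equation*}
The main obstacle is the concentration step: although each $\theta_j$ is Gaussian, one must verify that the weighted tail $\sum_j a_j^2 \theta_j^2$ concentrates sharply around its mean for the optimal variance sequence, so that conditioning on $\Theta(m,c)$ degrades the prior by only a $(1+o(1))$ factor uniformly in the relevant regime of $\sigma^2$; this is precisely the technical content deferred to Section~\ref{sec_proof} in the paper, and handling it carefully (together with a routine truncation to reduce the infinite sum to a finite one before applying the Shannon bound) is what makes the reduction rigorous.
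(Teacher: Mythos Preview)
Your approach is essentially the paper's: a Gaussian product prior, the orthogonality decomposition into Bayes risk plus quantization error, and the coordinatewise Shannon/rate-distortion lower bound on mutual information. The one place that needs sharpening is the concentration step you flag at the end. As written, the claim that ``a standard second-moment computation gives $\P_\pi(\theta\notin\Theta(m,c))\to 0$'' fails when $\sum_j a_j^2\sigma_j^2$ sits on the boundary $c^2/\pi^{2m}$---which is exactly where the optimizing $\sigma^2$ lives---since then $\E_\pi\bigl[\sum_j a_j^2\theta_j^2\bigr]=c^2/\pi^{2m}$ and the escape probability is of order $1/2$, not $o(1)$. The paper's device is to shrink the prior variances to $(1-\tau)\sigma_j^2$ first, obtain an \emph{exponential} tail bound via chi-squared concentration (Lemma~\ref{chisqineq}) for the shrunk prior, and then verify regime-by-regime that $\sqrt{\P(\theta\notin\Theta)}=o(I_\tau)$; a Chebyshev-type bound would not suffice here because $I_\tau$ is only polynomially small in $\eps$ (or $B_\eps$). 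After bounding the residual integral over $\Theta(m,c)^c$ by Cauchy--Schwarz against a fourth-moment term, one sends $\tau\to 0$. With that fix, your outline coincides with the paper's proof.
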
 
\begin{proof}
As will be clear to the reader, $V_\varepsilon(m,c,B_\eps)$ is achieved by some $\sigma^2$ that is non-increasing and finitely supported. Let $\sigma^2$ be such that
\begin{equation*}
\sigma_1^2\geq\dots\geq\sigma_n^2>0=\sigma_{n+1}=\dots,\ \sum_{j=1}^na_j^2\sigma_j^2=\frac{c^2}{\pi^{2m}},
\end{equation*}
and let 
\begin{equation*}
\Theta_n(m,c)=\{\theta\in\ell_2:\sum_{j=1}^na_j^2\theta_j^2\leq\frac{c^2}{\pi^{2m}},\ \theta_j=0\text{ for }j\geq n+1\}\subset\Theta(m,c).
\end{equation*}
We build on this sequence of $\sigma^2$ a prior distribution of $\theta$.
In particular, for $\tau\in(0,1)$, write $s_j^2=(1-\tau)\sigma_j^2$ and let $\pi_\tau(\theta;\sigma^2)$ be a the prior distribution on $\theta$ such that
\begin{align*}
\theta_j\sim\mathcal N(0,s_j^2),&\quad j=1,\dots,n,\\
\mathbb P(\theta_j=0)=1,&\quad j\geq n+1.
\end{align*}
We observe that
\begin{align*}
R_\eps(m, c, B_\eps) &\geq \inf_{\hat\theta,C(\hat\theta)\leq B_\eps}\sup_{\theta\in\Theta_n(m,c)} \mathbb
E\|\theta-\hat\theta\|^2\\
&\geq \inf_{\hat\theta,C(\hat\theta)\leq B_\eps}\int_{\Theta_n(m,c)} \mathbb E\|\theta-\hat\theta\|^2d\pi_\tau(\theta;\sigma^2)\\
&\geq I_\tau-r_\tau
\end{align*}
where $I_\tau$ is the integrated risk of the optimal quantized estimator
\begin{equation*}
I_\tau= \inf_{{\hat\theta,C(\hat\theta)\leq B_\eps}}\int_{\mathbb R^n\otimes\{0\}^\infty} \mathbb E\|\theta-\hat\theta\|^2d\pi_\tau(\theta;\sigma^2)
\end{equation*}
and $r_\tau$ is the residual
\begin{equation*}
r_\tau = \sup_{\hat\theta\in\Theta(m,c)}\int_{\overline{\Theta(m,c)}} \mathbb E\|\theta-\hat\theta\|^2d\pi_\tau(\theta;\sigma^2)
\end{equation*}
where $\overline{\Theta(m,c)}=(\mathbb R^n\otimes\{0\}^\infty)\backslash\Theta_n(m,c)$.
As shown in Section \ref{sec:lowerbound}, $\lim_{\tau\to0}I_\tau$ is lower bounded by the value of the optimization
\begin{equation*}
\begin{aligned}
\min_{\mu^2}\ &\ \sum_{j=1}^\infty\mu_j^2+\sum_{j=1}^\infty\frac{\sigma_j^2\varepsilon^2}{\sigma_j^2+\varepsilon^2}\\
\text{such that}\ &\ \sum_{j=1}^\infty\frac{1}{2}\log_+\left(\frac{\sigma_j^4}{\mu_j^2(\sigma_j^2+\varepsilon^2)}\right)\leq B_\eps.
\end{aligned}
\end{equation*}
It then suffices to show that $r_\tau=o(I_\tau)$ as $\varepsilon\to 0$ for $\tau\in(0,1)$. Let $d_n=\sup_{\theta\in\Theta_n(m,c)}\|\theta\|$, which is bounded since for any $\theta\in\Theta_n(m,c)$
\begin{equation*}
\|\theta\|=\sqrt{\sum_{j}\theta_j^2} = \sqrt{\frac{1}{a_1^2}\sum_{j}a_1^2\theta_j^2}\leq\sqrt{\frac{1}{a_1^2}\sum_{j}a_j^2\theta_j^2}
\leq\sqrt{\frac{1}{a_1^2}\frac{c^2}{\pi^{2m}}} = \frac{c}{a_1\pi^m}.
\end{equation*}
We have
\begin{align*}
r_\tau&=\sup_{\hat\theta\in\Theta(m,c)}\int_{\overline{\Theta_n(m,c)}} \mathbb E\|\theta-\hat\theta\|^2d\pi_\tau(\theta;\sigma^2)\\
&\leq 2\int_{\overline{\Theta_n(m,c)}}(d_n^2+ \mathbb E\|\theta\|^2)d\pi_\tau(\theta;\sigma^2)\\
&\leq 2\left(d_n^2\, \mathbb P\left(\theta\notin{\Theta_n(m,c)}\right)+\left(\mathbb P(\theta\notin\Theta_n(m,c))\mathbb E\|\theta\|^4\right)^{1/2}\right)
\end{align*}
where we use the Cauchy-Schwarz inequality. Noticing that
\begin{align*}
\mathbb E\|\theta\|^4&=\mathbb E\bigg(\Big(\sum_{j=1}^n\theta_j^2\Big)^2\bigg)\\
&=\sum_{j_1\neq j_2}\mathbb E(\theta_{j_1}^2)\mathbb E(\theta_{j_2}^2)+\sum_{j=1}^n\mathbb E(\theta_j^4)\\
&\leq \sum_{j_1\neq j_2}s_{j_1}^2s_{j_2}^2+3\sum_{j=1}^ns_j^4\\
&\leq 3\Big(\sum_{j=1}^ns_j^2\Big)^2\leq 3d_n^4,
\end{align*}
we obtain
\begin{align*}
r_\tau&\leq 2d_n^2\left(\mathbb P\left(\theta\notin{\Theta_n(m,c)}\right)+\sqrt{3\mathbb P\left(\theta\notin{\Theta_n(m,c)}\right)}\right)\\
&\leq 6d_n^2\sqrt{\mathbb P\left(\theta\notin{\Theta_n(m,c)}\right)}.
\end{align*}
Thus, we only need to show that $\sqrt{\mathbb P\left(\theta\notin{\Theta_n(m,c)}\right)}=o(I_\tau)$. In fact,
\begin{align*}
\mathbb P(\theta\notin&\Theta_n (m,c))\nonumber\\
&=\mathbb P\left(\sum_{j=1}^na_j^2\theta_j^2>\frac{c^2}{\pi^{2m}}\right)\\
&=\mathbb P\left(\sum_{j=1}^na_j^2(\theta_j^2-\mathbb E(\theta_j^2))>\frac{c^2}{\pi^{2m}}-(1-\tau)\sum_{j=1}^na_j^2\sigma_j^2\right)\\
&=\mathbb P\left(\sum_{j=1}^na_j^2(\theta_j^2-\mathbb E(\theta_j^2))>\frac{\tau c^2}{\pi^{2m}}\right)\\
&=\mathbb P\left(\sum_{j=1}^n a_j^2s_j^2(Z_j^2-1)>\frac{\tau}{1-\tau}\sum_{j=1}^na_j^2s_j^2\right)
\end{align*}
where $Z_j\sim\mathcal N(0,1)$.
By Lemma \ref{chisqineq}, we get
\begin{equation*}
\mathbb P(\theta\notin\Theta_n (m,c))\leq \exp\left(-\frac{\tau^2}{8(1-\tau)^2}\frac{\sum_{j=1}^na_j^2s_j^2}{\max_{1\leq j\leq n}a_j^2s_j^2}\right)
=\exp\left(-\frac{\tau^2}{8(1-\tau)^2}\frac{\sum_{j=1}^na_j^2\sigma_j^2}{\max_{1\leq j\leq n}a_j^2\sigma_j^2}\right)
\end{equation*}
Next we will show that for the $\sigma^2$ that achieves $V_\varepsilon(m,c,B_\varepsilon)$, we have $\sqrt{\mathbb P(\theta\notin\Theta_n (m,c))}=o(I_\tau)$.
For the sufficient regime where $B_\varepsilon\varepsilon^{\frac{2}{2m+1}}\to\infty$ as $\varepsilon\to0$, it is shown in \cite{tsybakov2008introduction} that $\max_{1\leq j\leq n}a_j^2\sigma_j^2 = O(\eps^{\frac{2}{2m+1}})$ and $I_\tau = O(\eps^{\frac{4m}{2m+1}})$, and hence that $\sqrt{\mathbb P(\theta\notin\Theta_n (m,c))}=o(I_\tau)$.
For the insufficient regime where $B_\varepsilon\varepsilon^{\frac{2}{2m+1}}\to0$ but still $B_\varepsilon\to\infty$ as $\varepsilon\to 0$,
an achieving sequence $\sigma^2$ is given later by \eqref{eqn:insufficient_achievingsigma} and \eqref{eqn:asymp_Jeps}. 
We obtain that
$\max_{1\leq j\leq n}a_j^2\sigma_j^2 = O(B_\eps^{-1})$ and $I_\tau = O(B_\eps^{-2m})$, and therefore $\sqrt{\mathbb P(\theta\notin\Theta_n (m,c))}= o(I_\tau)$.
The sufficient regime where $B_\varepsilon\varepsilon^{\frac{2}{2m+1}}\to d$ for some constant $d$ is a bit more complicated as we don't have an explicit formula for the optimal sequence $\sigma^2$. However, by Lemma \ref{lem_variational}, for the continuous approximation $\sigma^2(x)$ such that $\sigma^2_j = \sigma^2(jh)h^{2m+1}$, we have
\begin{align*}
\lambda x^{2m}\sigma^2(x) &= \frac{\sigma^2(x)}{(\sigma^2(x)+1)^2}+\alpha\cdot\frac{\sigma^2(x)+2}{\sigma^2(x)+1}
\leq \frac{1}{4}+2\alpha
\end{align*}
where $\alpha=\exp\left(\frac{1}{x_0}\int_0^{x_0}\log\frac{\sigma^4(x)}{\sigma^2(x)+1}dx-\frac{2d}{x_0}\right)$
and $\lambda$ are both constants.
Therefore,
\begin{equation*}
\max_{1\leq j\leq n}a_j^2\sigma_j^2\approx j^{2m}\sigma^2(jh)h^{2m+1}\leq \frac{1}{\lambda}(\frac{1}{4}+2\alpha)\cdot h.
\end{equation*}
Note that $\sum_{j=1}^na_j^2\sigma^2_j = O(h^{2m})$ and that $h=\eps^{\frac{2}{2m+1}}$.
We obtain that for this case $I_\tau = O(\eps^{\frac{4m}{2m+1}})$ and $\sqrt{\mathbb P(\theta\notin\Theta_n (m,c))} = o(I_\tau)$.
Thus, for each of the three regimes, we have $r_\tau=o(I_\tau)$.
\end{proof}

\begin{lemma}[Lemma 3.5 in \cite{tsybakov2008introduction}]\label{chisqineq}
Suppose that $X_1,\dots,X_n$ are i.i.d.\ $\mathcal N(0,1)$. For $t\in(0,1)$ and $\omega_j>0$, $j=1,\dots,n$, we have
\begin{equation*}
\mathbb P\left(\sum_{j=1}^n\omega_j(X_j^2-1)>t\sum_{j=1}^nX_j\right)\leq\exp\left(-\frac{t^2\sum_{j=1}^n\omega_j}{8\max_{1\leq j\leq n}\omega_j}\right).
\end{equation*}
\end{lemma}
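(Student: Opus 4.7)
The plan is to prove this concentration bound for a weighted sum of centered chi-squared variables by the Chernoff/moment generating function (MGF) method. The one-dimensional ingredient is the MGF of $X_j^2 - 1$ for $X_j \sim \mathcal N(0,1)$, which for $s < 1/2$ equals
\begin{equation*}
\E\bigl[e^{s(X_j^2-1)}\bigr] = \frac{e^{-s}}{\sqrt{1-2s}},
\end{equation*}
so that $\log \E[e^{s(X_j^2-1)}] = -s - \tfrac{1}{2}\log(1-2s)$. The first, and most substantive, step is a sub-Gaussian-type bound on this log-MGF: namely,
\begin{equation*}
-s - \tfrac{1}{2}\log(1-2s) \;\leq\; 2s^2 \qquad \text{for all } s \in [0,1/4].
\end{equation*}
I would establish this by defining $g(s) = 2s^2 + s + \tfrac12\log(1-2s)$, observing $g(0)=0$, and computing $g'(s) = 2s(4s-1)/(1-2s)$, which is nonpositive on $[0,1/4]$, so that $g \leq 0$ there.

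Given this single-variable MGF bound, the rest is standard Chernoff bookkeeping. By independence of the $X_j$, for any $s>0$ satisfying $s\max_j\omega_j \leq 1/4$,
\begin{equation*}
\P\Bigl(\sum_{j=1}^n \omega_j(X_j^2-1) > u\Bigr) \;\leq\; e^{-su}\prod_{j=1}^n \E\bigl[e^{s\omega_j(X_j^2-1)}\bigr] \;\leq\; \exp\Bigl(-su + 2s^2\sum_{j=1}^n \omega_j^2\Bigr).
\end{equation*}
I would then take $u = t\sum_j \omega_j$ (the intended threshold, since the stated right-hand side $t\sum_j X_j$ is random and incompatible with the deterministic exponent displayed) and tune $s = t/(4\max_j \omega_j)$, a choice admissible because $t\in(0,1)$ and one that keeps the linear term proportional to the target exponent. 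The quadratic correction is tamed via the elementary bound $\omega_j^2 \leq (\max_j \omega_j)\,\omega_j$, giving $\sum_j \omega_j^2 \leq (\max_j \omega_j)\sum_j \omega_j$. Substituting, the linear piece yields $-t^2\sum_j\omega_j/(4\max_j \omega_j)$, and the quadratic piece is at most $+t^2\sum_j\omega_j/(8\max_j\omega_j)$; these combine to the claimed exponent $-t^2\sum_j\omega_j/(8\max_j\omega_j)$.

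The main obstacle, to the extent that there is one, is the calculus inequality $-s - \tfrac12\log(1-2s) \leq 2s^2$ on $[0,1/4]$, since this single step pins down the constants (the factor $1/8$ and the requirement $t<1$) in the final bound. Everything else is a deterministic Chernoff-type argument. The fact that I take $s$ to be only $t/(4\max_j\omega_j)$, rather than the unconstrained Chernoff optimizer $u/(4\sum_j\omega_j^2)$, is precisely what allows the bound to be written cleanly in terms of the ratio $\sum_j\omega_j/\max_j\omega_j$, which plays the role of an effective sample size for the non-uniform weights.
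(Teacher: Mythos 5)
Your overall strategy is the standard Chernoff/moment-generating-function argument, and it works; the paper does not prove this lemma itself but simply cites Tsybakov (Lemma 3.5), whose proof is of the same exponential-moment type, so you are on the intended route. You were also right to treat the random threshold $t\sum_j X_j$ in the displayed statement as a typo for $t\sum_j \omega_j$. The bookkeeping is correct: for $s\max_j\omega_j\le 1/4$ the bound $\log\mathbb{E}\,e^{s(X^2-1)}\le 2s^2$, the estimate $\sum_j\omega_j^2\le(\max_j\omega_j)\sum_j\omega_j$, and the choices $u=t\sum_j\omega_j$ and $s=t/(4\max_j\omega_j)$ (admissible since $t<1$) give exactly the exponent $-t^2\sum_j\omega_j/\bigl(8\max_j\omega_j\bigr)$.

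The one flaw is a sign slip in the calculus step, which as literally written proves the wrong direction. With your $g(s)=2s^2+s+\tfrac12\log(1-2s)$, the derivative is $g'(s)=2s(1-4s)/(1-2s)\ge 0$ on $[0,1/4]$, hence $g\ge 0$ there, and $g\ge 0$ is what you need (it is equivalent to $-s-\tfrac12\log(1-2s)\le 2s^2$). Your stated derivative $2s(4s-1)/(1-2s)$ and conclusion $g\le 0$ are instead consistent with the definition $g(s)=-s-\tfrac12\log(1-2s)-2s^2$; if one took your written $g$ together with your written conclusion $g\le 0$, it would assert $2s^2\le -s-\tfrac12\log(1-2s)$, which is the reverse inequality and would break the proof. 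Since the inequality $-s-\tfrac12\log(1-2s)\le 2s^2$ on $[0,1/4]$ is true and follows from either consistent choice of signs, this is a recoverable bookkeeping error rather than a genuine gap: straighten out the definition of $g$ (or its derivative) and the argument is complete.
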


\begin{proof}[Proof of Lemma \ref{lem_v1}]
This is in fact Pinsker's theorem, which gives the exact asymptotic minimax risk of estimation of normal means in the Sobolev ellipsoid. The proof can be found in \cite{nussbaum1999minimax} and \cite{tsybakov2008introduction}.
\end{proof}

\begin{proof}[Proof of Lemma \ref{lem_v2}]
As argued in Section \ref{sec:lowerbound} for the lower bound in the sufficient regime, optimization problem \eqref{eqn_Q1} can be reformulated as
\begin{equation}
\tag{$\mathcal Q_2$}\label{eqn_Q2}
\begin{aligned}
\max_{\sigma^2,J}\ &\ J\eta\\
\text{such that}\ &\ \frac{1}{2}\sum_{j=1}^J\log_+\left(\frac{\sigma_j^4}{\eta(\sigma_j^2+\varepsilon^2)}\right)\leq B_\eps\\
&\ \sum_{j=1}^J a_j^2\sigma_j^2\leq \frac{c^2}{\pi^{2m}}\\
&\ (\sigma_j^2) \text{ is decreasing and
}\frac{\sigma_J^4}{\sigma_J^2+\varepsilon^2} \geq \eta.
\end{aligned}
\end{equation}
Now suppose that we have a series ($\sigma_j^2$) which satisfies the last constraint and is supported on $\{1,\dots,J\}$. 
By the first constraint, we have that
\begin{align*}
J\eta&=J\exp\left(-\frac{2B_\eps}{J}\right)\left(\prod_{j=1}^J\frac{\sigma_j^4}{\sigma_j^2+\varepsilon^2}\right)^{\frac{1}{J}}\\
&\leq J\exp\left(-\frac{2B_\eps}{J}\right)\left(\prod_{j=1}^J\sigma_j^2\right)^{\frac{1}{J}}\\
&=J\exp\left(-\frac{2B_\eps}{J}\right)\left(\prod_{j=1}^Ja_j^2\sigma_j^2\right)^{\frac{1}{J}}\left(\prod_{j=1}^Ja_j^{-2}\right)^{\frac{1}{J}}\\
&\leq \exp\left(-\frac{2B_\eps}{J}\right)\left(\sum_{j=1}^Ja_j^2\sigma_j^2\right)\left(\prod_{j=1}^Ja_j^{-2}\right)^{\frac{1}{J}}\\ 
&\leq \frac{c^2}{\pi^{2m}}\exp\left(-\frac{2B_\eps}{J}\right)\left(\prod_{j=1}^Ja_j^{-2}\right)^{\frac{1}{J}}\\ 
&=\frac{c^2}{\pi^{2m}}\left(\exp\left(\frac{B_\eps}{m}\right)J!\right)^{-\frac{2m}{J}}.\numberthis\label{eqn_upperboundsQ}
\end{align*}
This provides a series of upper bounds for 
$Q_\eps(m,c,B_\eps)$ parameterized by $J$.
To minimize \eqref{eqn_upperboundsQ} over $J$, we look at the ratio of the neighboring terms with $J$ and $J+1$, and compare it to 1. 
We obtain that the optimal $J$ satisfies
\begin{equation}
\frac{J^J}{J!}< \exp\left(\frac{B_\eps}{m}\right)\leq \frac{(J+1)^{J+1}}{(J+1)!}.\label{eqn_Jeps}
\end{equation}
Denote this optimal $J$ by $J_\eps$. By Stirling's approximation, we have
\begin{equation}\label{eqn:asymp_Jeps}
\lim_{\eps\to0}\frac{B_\eps/m}{J_\eps}=1,
\end{equation}
and plugging this asymptote into \eqref{eqn_upperboundsQ}, we get as $\eps\to0$
\begin{equation*}
\frac{c^2}{\pi^{2m}}\left(\exp\left(\frac{B_\eps}{m}\right)J_\eps!\right)^{-\frac{2m}{J_\eps}}\sim
\frac{c^2}{\pi^{2m}}J_\eps^{-2m}\sim\frac{c^2m^{2m}}{\pi^{2m}}B_\eps^{-2m}.
\end{equation*}
This gives the desired upper bound \eqref{eqn_upperboundQ}.

Next we show that the upper bound \eqref{eqn_upperboundQ} is asymptotically achievable when $B_\eps\eps^{\frac{2}{2m+1}}\to0$ and $B_\eps\to\infty$. It suffices to find a feasible solution that attains \eqref{eqn_upperboundQ}. Let 
\begin{equation}\label{eqn:insufficient_achievingsigma}
\tilde\sigma^2_j = \frac{c^2/\pi^{2m}}{J_\eps a_j^2},\ j=1,\dots, J_\eps.
\end{equation}
Note that the entire sequence of $(\tilde\sigma^2_j)_{j=1}^{J_\eps}$ does not qualify for a feasible solution,
since the first constraint in \eqref{eqn_Q2} won't be satisfied for any $\eta\leq\frac{\tilde\sigma_{J_\eps}^4}{\tilde\sigma_{J_\eps}^2+\eps^2}$. We keep only the first $J_\eps'$ terms of $(\tilde\sigma^2_j)$, where $J_\eps'$ is the largest $j$ such that 
\begin{equation}
\frac{\tilde\sigma^4_j}{\tilde\sigma^2_{j}+\eps^2}\geq \tilde\sigma^2_{J_\eps}.\label{eqn_Jepsprime}
\end{equation}
Thus,
\begin{equation*}
\sum_{j=1}^{J_\eps'}\frac{1}{2}\log_+\left(\frac{\frac{\tilde\sigma^4_j}{\tilde\sigma_j^2+\eps^2}}{\tilde\sigma_{J_\eps}^2}\right)\leq \sum_{j=1}^{J_\eps'}\frac{1}{2}\log_+\left(\frac{\tilde\sigma_j^2}{\tilde\sigma_{J_\eps}^2}\right)\leq \sum_{j=1}^{J_\eps}\frac{1}{2}\log_+\left(\frac{\tilde\sigma_j^2}{\tilde\sigma_{J_\eps}^2}\right)\leq B_\eps,
\end{equation*}
where the last inequality is due to \eqref{eqn_Jeps}. This tells us that setting $\eta=\tilde\sigma_{J_\eps}^2$ leads to a feasible solution to \eqref{eqn_Q2}. As a result, 
\begin{equation}
Q_\eps(m,c,B_\eps)\geq J'_\eps\tilde\sigma^2_{J_\eps}.\label{eqn_achieveQ}
\end{equation}
If we can show that $J_\eps'\sim J_\eps$, then 
\begin{equation}
J'_\eps\tilde\sigma^2_{J_\eps}\sim J_\eps\tilde\sigma^2_{J_\eps}\sim\frac{c^2m^{2m}}{\pi^{2m}}B_\eps^{-2m}.
\end{equation}
To show that $J_\eps'\sim J_\eps$, it suffices to show that $a_{J_\eps'}\sim a_{J_\eps}$. Plugging the formula of $\tilde\sigma^2_j$ into (\ref{eqn_Jepsprime}) and solving for $a^2_{J_\eps'}$, we get 
\begin{align*}
a^2_{J_\eps'}&\sim\frac{\displaystyle -\frac{c^2}{\pi^{2m}J_\eps}+\sqrt{(\frac{c^2}{\pi^{2m}J_\eps})^2+4\frac{c^2}{\pi^{2m}J_\eps}\eps^2a_{J_\eps}^2}}{2\eps^2}\\
&\sim\frac{\displaystyle -\frac{c^2}{\pi^{2m}J_\eps}+\frac{c^2}{\pi^{2m}J_\eps}+\frac{1}{2}\frac{\pi^{2m}J_\eps}{c^2}4\frac{c^2}{\pi^{2m}J_\eps}\eps^2a_{J_\eps}^2}{2\eps^2} = a_{J_\eps}^2
\end{align*}
where the equivalence is due to the assumption $B_\eps\eps^{\frac{2}{2m+1}}\to0$ and a Taylor's expansion of the function $\sqrt{x}$.
\end{proof}

\begin{proof}[Proof of Lemma \ref{lem_variational}]
Suppose that $\sigma^2(x)$ with $x_0$ solves \eqref{P4}. 
Consider function $\sigma^2(x)+\xi v(x)$ such that it is still feasible for \eqref{P4},
and thus we have
\begin{equation*}
\int_0^{x_0}x^{2m}v(x)dx\leq 0.
\end{equation*}
Now plugging $\sigma^2(x)+\xi v(x)$ for $\sigma^2(x)$ in the objective function of \eqref{P4}, taking derivative with respect to $\xi$, and letting $\xi\to0$, we must have
\begin{equation*}
\int_0^{x_0}\frac{v(x)}{(\sigma^2(x)+1)^2}dx+x_0\exp\left(\frac{1}{x_0}\int_0^{x_0}\log\frac{\sigma^4(x)}{\sigma^2(x)+1}dx-\frac{2d}{x_0}\right)\frac{1}{x_0}\int_0^{x_0}\frac{2v(x)}{\sigma^2(x)}-\frac{v(x)}{\sigma^2(x)+1}dx\leq 0,
\end{equation*}
which, after some calculation and rearrangement of terms, yields
\begin{equation*}
\int_0^{x_0}v(x)\left(\frac{1}{(\sigma^2(x)+1)^2}+\exp\left(\frac{1}{x_0}\int_0^{x_0}\log\frac{\sigma^4(x)}{\sigma^2(x)+1}dx-\frac{2d}{x_0}\right)\frac{\sigma^2(x)+2}{\sigma^2(x)(\sigma^2(x)+1)}\right)dx\leq0.
\end{equation*}
Thus, by the lemma that follows, we obtain that for some $\lambda$
\begin{equation*}
\frac{1}{(\sigma^2(x)+1)^2}+\exp\left(\frac{1}{x_0}\int_0^{x_0}\log\frac{\sigma^4(y)}{\sigma^2(y)+1}dy-\frac{2d}{x_0}\right)\frac{\sigma^2(x)+2}{\sigma^2(x)(\sigma^2(x)+1)}=\lambda x^{2m}.
\end{equation*}
\end{proof}

\begin{lemma}\label{lem_variational2}
Suppose that $f(x)$ and $g(x)$ are two non-zero functions on $(0,x_0)$ such that for any $v(x)$ satisfying
$\int_0^{x_0}f(x)v(x)dx \leq 0$,
it holds that
$\int_0^{x_0}g(x)v(x)dx \leq 0$.
Then there exists a constant $\lambda$ such that $f(x)=\lambda g(x)$.
\end{lemma}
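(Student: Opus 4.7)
The plan is to exploit the standard duality argument that an integral inequality of the form ``$\int fv\leq 0\Rightarrow \int gv\leq 0$ for all admissible $v$'' forces $g$ to be a nonnegative constant multiple of $f$. The key step is first to upgrade the inequality hypothesis to an equality hypothesis on the null space of the linear functional $L_f:v\mapsto \int_0^{x_0} f(x)v(x)\,dx$.

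Specifically, I would first observe that if $\int_0^{x_0} fv\,dx=0$ for some test function $v$, then both $v$ and $-v$ satisfy the hypothesis $\int fv\leq 0$ (with equality), hence by assumption $\int gv\leq 0$ and $\int g(-v)\leq 0$, so $\int gv=0$. Thus $\ker L_f\subseteq \ker L_g$. Since $f$ is not the zero function, one can pick a test function $v_0$ with $\int fv_0\neq 0$, and define the candidate constant
\begin{equation*}
\lambda \;=\; \frac{\int_0^{x_0} g(x) v_0(x)\,dx}{\int_0^{x_0} f(x) v_0(x)\,dx}.
\end{equation*}

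For an arbitrary test function $w$, set $\alpha=\int fw/\int fv_0$ and $v=w-\alpha v_0$, so that $\int fv=0$. The inclusion of kernels gives $\int gv=0$, which rearranges to $\int gw=\lambda \int fw$. Consequently $\int_0^{x_0}(g-\lambda f)w\,dx=0$ for every test $w$ in our class, and standard density arguments (taking $w$ to range over a class dense in $L^2(0,x_0)$, for instance bounded measurable functions supported on arbitrary subintervals) yield $g(x)=\lambda f(x)$ for a.e.\ $x\in(0,x_0)$. A final sanity check using the original one-sided hypothesis shows $\lambda\geq 0$: if $\int fv<0$ for some $v$, then $\int gv=\lambda\int fv$ must also be $\leq 0$, ruling out $\lambda<0$.

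The only real obstacle is making sure the class of admissible variations $v$ is rich enough for both steps: we need enough $v$ to (a) exhibit a $v_0$ with $\int fv_0\neq 0$, and (b) conclude $g=\lambda f$ from $\int(g-\lambda f)w=0$ for all $w$. Both are immediate provided $f,g$ are locally integrable and $v$ ranges over, say, bounded measurable functions on $(0,x_0)$, which is the implicit setting of the variational argument in the proof of Lemma~\ref{lem_variational}. Note also that in the application to Lemma~\ref{lem_variational} we actually have a one-sided constraint $\int x^{2m}v(x)\,dx\leq 0$ on the variations, but this constraint is itself of the form $\int f_0 v\leq 0$ with $f_0(x)=x^{2m}$, so the same argument applies directly with $f$ playing the role of $f_0$ and identifies the Lagrange multiplier $\lambda$.
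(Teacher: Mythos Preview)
Your proof is correct and follows essentially the same route as the paper's: first establish the kernel inclusion $\ker L_f\subseteq \ker L_g$, then deduce that $g$ is a scalar multiple of $f$. Two cosmetic differences are worth noting. First, your argument for the kernel inclusion (apply the hypothesis to both $v$ and $-v$) is shorter than the paper's, which instead assumes $\int fv_0=0$, $\int gv_0<0$ and reaches a contradiction by considering $v_\gamma=v-\gamma v_0$ for large $\gamma$. Second, the paper sidesteps your density appeal by choosing specific test functions: it sets $\lambda=\int f^2\big/\int fg$, so that $\int f(f-\lambda g)=0$, hence by the kernel inclusion $\int g(f-\lambda g)=0$, and subtracting gives $\int(f-\lambda g)^2=0$ directly. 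Either variant is perfectly valid.
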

\begin{proof}
First we show that for any $v(x)$ such that $\int_0^{x_0}f(x)v(x)dx=0$ we must have $\int_0^{x_0}g(x)v(x)dx=0$. 
Otherwise, suppose that $v_0(x)$ is such that $\int_0^{x_0}f(x)v_0(x)dx=0$ and $\int_0^{x_0}g(x)v_0(x)dx<0$.
Then take another $v(x)$ with $\int_0^{x_0}f(x)v(x)dx\leq 0$ and consider $v_\gamma(x) = v(x)-\gamma v_0(x)$.
We have $\int_0^{x_0}f(x)v_\gamma(x)dx\leq 0$ and 
$\int_0^{x_0}g(x)v_\gamma(x) = \int_0^{x_0}v(x)g(x)dx-\gamma \int_0^{x_0}g(x)v_0(x)dx>0$ for large enough $\gamma$,
which results in contradiction. 

Let $\lambda = \int_0^{x_0} f(x)^2dx / \int_0^{x_0}f(x)g(x)dx$ as the denominator cannot be zero. 
In fact, if $\int_0^{x_0}f(x)g(x)dx=0$, it would imply that $\int_0^{x_0}g(x)^2dx=0$ and hence $g(x)\equiv 0$.
Now consider the function $f(x)-\lambda g(x)$. Notice that we have
$\int_0^{x_0}f(x)(f(x)-\lambda g(x))dx=0$ by the definition of $\lambda$.
It follows that $\int_0^{x_0}g(x)(f(x)-\lambda g(x))dx=0$, and therefore,
$\int_0^{x_0}(f(x)-\lambda g(x))^2dx=0$, which concludes the proof. 
\end{proof}

\subsection{Proof of Theorem \ref{thm_upperbound}}

Now we give the details of the proof of
Theorem \ref{thm_upperbound}. For the purpose of our analysis, we
define two allocations of bits, the monotone allocation and the blockwise constant allocation,
\begin{align}
\Pi_{\text{blk}}(B)&=\left\{(b_j)_{j=1}^\infty:\ \sum_{j=1}^\infty b_j\leq B,\ b_j= \bar b_{k}\text{ for }j\in J_k,\ 0\leq b_j\leq b_{\max}\right\},\label{eqn_piblk}\\
\Pi_{\text{mon}}(B)&=\left\{(b_j)_{j=1}^\infty:\ \sum_{j=1}^\infty b_j\leq B,\ b_{j-1}\geq b_j,\ 0\leq b_j\leq b_{\max}\right\},\label{eqn_pimon}
\end{align}
where $b_{\max}=2\log(1/\eps)$.  We also define two classes of weights, the monotonic weights and the blockwise constant weights,
\begin{align}
\Omega_{\text{blk}}&=\left\{(\omega_j)_{j=1}^\infty:\ \omega_j= \bar\omega_{k}\text{ for }j\in J_k,\ 0\leq \omega_j\leq 1\right\},\label{eqn_omegablk}\\
\Omega_{\text{mon}}&=\left\{(\omega_j)_{j=1}^\infty:\ \omega_{j-1}\geq \omega_j,\ 0\leq \omega_j\leq 1\right\}.\label{eqn_omegamon}
\end{align}
We will also need the following results from \cite{tsybakov2008introduction} regarding the weakly geometric system of blocks.
\begin{lemma}\label{lem:wgsb}
Let $\{J_k\}$ be a weakly geometric block system defined by \eqref{wgb}. Then there exists $0<\eps_0<1$ and $C>0$ such that for any $\eps\in(0,\eps_0)$, 
\begin{align*}
K\leq C\log^2(1/\eps),\\
\max_{1\leq k\leq K-1}\frac{T_{k+1}}{T_k}\leq 1+3\rho_\eps.
\end{align*}
\end{lemma}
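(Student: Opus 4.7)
The plan is to establish both bounds by direct estimation from the recursion \eqref{wgb}, leveraging the elementary fact $T_1 = \lceil \rho_\eps^{-1}\rceil \geq \rho_\eps^{-1}$. I would handle the ratio bound first, as it also helps pin down the size of the last block. For the interior ratios $2 \leq k \leq K-2$, I would sandwich each $T_k$ using the floor: $T_1(1+\rho_\eps)^{k-1} - 1 \leq T_k \leq T_1(1+\rho_\eps)^{k-1}$. This gives
\[ \frac{T_{k+1}}{T_k} \leq \frac{T_1(1+\rho_\eps)^{k}}{T_1(1+\rho_\eps)^{k-1} - 1} = \frac{1+\rho_\eps}{1 - (T_1(1+\rho_\eps)^{k-1})^{-1}}. \]
Since $T_1(1+\rho_\eps)^{k-1} \geq \rho_\eps^{-1}$, the denominator is at least $1 - \rho_\eps$, so the ratio is at most $(1+\rho_\eps)/(1-\rho_\eps) = 1 + 2\rho_\eps/(1-\rho_\eps) \leq 1 + 3\rho_\eps$ provided $\rho_\eps \leq 1/3$, i.e., $\eps \leq e^{-3}$. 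The edge ratio $T_2/T_1 \leq 1+\rho_\eps$ is immediate. For the tail ratio $T_K/T_{K-1}$, I would invoke the stopping rule underlying \eqref{wgb}, namely that $K$ is the smallest index such that a further geometric block $\lfloor T_1(1+\rho_\eps)^{K-1}\rfloor$ would overflow the remaining $N_\eps - \sum_{k=1}^{K-1}T_k$ indices; this forces $T_K \leq T_1(1+\rho_\eps)^{K-1}$, and the same calculation as above applies.

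To bound $K$, I would sum the geometric lower bound $T_k \geq T_1(1+\rho_\eps)^{k-1} - 1$ over $k=1,\ldots,K-1$ and use $\sum_{k=1}^{K-1} T_k < N_\eps \leq \eps^{-2}$ to obtain an inequality of the shape
\[ (1+\rho_\eps)^{K-1} \leq 1 + \rho_\eps \cdot \frac{N_\eps + K}{T_1} \leq 1 + \rho_\eps^2 \eps^{-2} + \rho_\eps^2 K. \]
Taking logarithms, using $\log(1+\rho_\eps) \geq \rho_\eps/2$ for small $\rho_\eps$, and noting $\log(\rho_\eps^2 \eps^{-2}) = 2\log(1/\eps) - 2\log\log(1/\eps) \leq 2\log(1/\eps)$, I would arrive at an inequality of the form $K \leq 4\log^2(1/\eps) + O\bigl(\log(1/\eps)\cdot\log K\bigr)$. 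A quick bootstrap---first plugging in the trivial bound $K \leq N_\eps$ to absorb the $\log K$ term, then iterating once---yields $K \leq C\log^2(1/\eps)$ for some absolute $C$ and all $\eps$ below a threshold $\eps_0$.

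The calculations are essentially routine; the only real subtleties are (i) making explicit the stopping convention under which $K$ is chosen so that $T_K$ cannot exceed the next would-be geometric block (which is what controls the tail ratio), and (ii) handling the $O(\log K)$ term on the right-hand side of the inequality for $K$ via the bootstrap. Neither of these is a serious obstacle, but both need to be stated carefully so that the absolute constant $C$ and threshold $\eps_0$ are genuinely $\eps$-independent.
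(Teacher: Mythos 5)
Your argument is correct. Note, though, that the paper itself does not prove this lemma at all: it is imported verbatim from Tsybakov's book (the reference \cite{tsybakov:2008}, where it appears as the lemma on weakly geometric blocks), so there is no in-paper proof to compare against; what you have written is essentially a reconstruction of the standard argument there. Your two key steps are sound: the sandwich $T_1(1+\rho_\eps)^{k-1}-1\leq T_k\leq T_1(1+\rho_\eps)^{k-1}$ together with $T_1\geq\rho_\eps^{-1}$ gives $T_{k+1}/T_k\leq(1+\rho_\eps)/(1-\rho_\eps)\leq 1+3\rho_\eps$ once $\rho_\eps\leq 1/3$, and summing the geometric lower bound against $\sum_{k=1}^{K-1}T_k\leq N_\eps\leq\eps^{-2}$ gives $(1+\rho_\eps)^{K-1}\leq 1+\rho_\eps^2\eps^{-2}+\rho_\eps^2 K$, whence $K\leq C\log^2(1/\eps)$. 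Two small remarks. First, you correctly identify that the paper's display \eqref{wgb} leaves the choice of $K$ implicit, and that the bound $T_K/T_{K-1}\leq 1+3\rho_\eps$ genuinely needs the stopping convention $T_K\leq\lfloor T_1(1+\rho_\eps)^{K-1}\rfloor$ (i.e.\ $K$ is the first index at which the cumulative geometric sizes reach $N_\eps$); making that explicit, as you do, is the right reading and matches the source definition. Second, your bootstrap for the $\log K$ term is more elaborate than necessary: since trivially $K\leq N_\eps+1\leq\eps^{-2}+1$, a single substitution already gives $\log(1+\rho_\eps^2\eps^{-2}+\rho_\eps^2K)\leq 3\log(1/\eps)$ for small $\eps$, and hence $K\leq 6\log^2(1/\eps)+1$ directly, with an explicit $\eps_0$; no iteration is needed.
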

We divide the proof into four steps.

\subsection*{Step 1. Truncation and replacement}
The loss of the quantized estimator $\check\theta$ can be decomposed into
\begin{equation*}
\|\check\theta-\theta\|^2=\sum_{k=1}^{K}\|\check\theta_{(k)}-\theta_{(k)}\|^2+\sum_{j=N+1}^\infty\theta_j^2,
\end{equation*}
where the remainder term satisfies
\begin{equation*}
\sum_{j=N+1}^\infty \theta_j^2\leq N^{-2m}\sum_{j=N+1}^\infty a_j^2\theta_j^2=O(N^{-2m}).
\end{equation*}
If we assume that $m>1/2$, which corresponds to classes of continuous functions, 
the remainder term is then $o(\eps^2)$.
If $m\leq 1/2$, the remainder term is on the order of $O(\eps^{4m})$, 
which is still negligible compared to the order of the lower bound $\eps^{\frac{4m}{2m+1}}$. 
To ease the notation, we will assume that $m>1/2$, 
and write the remainder term as $o(\eps^2)$, 
but need to bear in mind that the proof works for all $m>0$.
We can thus discard the remainder term in our analysis. 
Recall that the quantized estimate for each block is given by
\begin{equation*}
\check \theta_{(k)}=\frac{\check S_k^2-T_k\eps^2}{\check S_k}\sqrt{1-2^{-2\tilde b_k}} \check Z_{(k)},
\end{equation*}
and consider the following estimate with $\check S_k$ replaced by $S_k$
\begin{equation*}
\hat\theta_{(k)}=\frac{S_k^2-T_k\eps^2}{S_k}\sqrt{1-2^{-2\tilde b_k}}\check Z_{(k)}.\label{eqn_thetahat}
\end{equation*} 
Notice that
\begin{align*}
\|\hat\theta_{(k)}-\check\theta_{(k)}\|&=\left|\frac{\check S_k^2-T_k\eps^2}{\check S_k}-\frac{S_k^2-T_k\eps^2}{S_k}\right|\sqrt{1-2^{-2\tilde b_k}}\|\check Z_{(k)}\|\\
&\leq \left|\frac{\check S_kS_k+T_k\eps^2}{\check S_kS_k}\right|\left|\check S_k-S_k\right|\\
&\leq 2\eps^2
\end{align*}
where the last inequality is because $\check S_kS_k\geq T_k\eps^2$ and $\left|\check S_k-S_k\right|\leq \eps^2$. Thus we can safely replace $\check\theta_{(k)}$ by $\hat\theta_{(k)}$ because
\begin{align*}
\|\check\theta_{(k)}&-\theta_{(k)}\|^2\nonumber\\
&=\|\check\theta_{(k)}-\hat\theta_{(k)}+\hat\theta_{(k)}-\theta_{(k)}\|^2\\
&\leq \|\check\theta_{(k)}-\hat\theta_{(k)}\|^2+\|\hat\theta_{(k)}-\theta_{(k)}\|^2+2\|\check\theta_{(k)}-\hat\theta_{(k)}\|\|\hat\theta_{(k)}-\theta_{(k)}\|\\
&= \|\hat\theta_{(k)}-\theta_{(k)}\|^2+O(\eps^2).
\end{align*}
Therefore, we have
\begin{equation*}
\mathbb E\|\check\theta-\theta\|^2=\mathbb E\sum_{k=1}^K\|\hat\theta_{(k)}-\theta_{(k)}\|^2+O(K\eps^2).
\end{equation*}

\subsection*{Step 2. Expectation over codebooks}
Now conditioning on the data $Y$, we work under the probability measure introduced by the random codebook. Write
\begin{equation*}
\lambda_k = \frac{S_k^2-T_k\eps^2}{S_k^2} \text{ and  } Z_{(k)}=\frac{Y_{(k)}}{\|Y_{(k)}\|}.
\end{equation*}
We decompose and examine the following term
\begin{align*}
A_k&=\|\hat\theta_{(k)}-\theta_{(k)}\|^2\\
&=\|\hat\theta_{(k)}-\lambda_k S_kZ_{(k)}+\lambda_k S_kZ_{(k)}-\theta_{(k)}\|^2\\
&=\underbrace{\|\hat\theta_{(k)}-\lambda_k S_kZ_{(k)}\|^2}_{A_{k,1}}+\underbrace{\|\lambda_k S_kZ_{(k)}-\theta_{(k)}\|^2}_{A_{k,2}}\nonumber\\
&\hspace{1in}+\underbrace{2\langle\hat\theta_{(k)}-\lambda_k S_kZ_{(k)},\lambda_k S_kZ_{(k)}-\theta_{(k)}\rangle}_{A_{k,3}}.
\end{align*}
To bound the expectation of the first term $A_{k,1}$, we need the following lemma, which bounds the probability of the distortion of a codeword exceeding the desired value. 
\begin{lemma}\label{lem_ratedistortion}
Suppose that $Z_1,\dots,Z_n$ are independent and each follows the uniform distribution on the $t$-dimensional unit sphere $\mathbb S^{t-1}$. Let $y\in \mathbb S^{t-1}$ be a fixed vector, and 
\begin{equation*}
Z^*=\argmin_{z\in Z_{1:n}}\left\|\sqrt{1-2^{-2q}}z-y\right\|^2.
\end{equation*}
If $n=2^{qt}$, then
\begin{equation*}
\mathbb E\left\|\sqrt{1-2^{-2q}}Z^*-y\right\|^2\leq 2^{-2q}\left(1+\nu(t)\right)+2e^{-2t}
\end{equation*}
where 
\begin{equation*}
\nu(t)= \frac{6\log t+7}{t-6\log t-7}.
\end{equation*}
\end{lemma}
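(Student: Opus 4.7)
The plan is to reduce the distortion minimization to a maximum-inner-product problem on the sphere, choose a threshold matched to the target bound, and control the residual probability via a standard spherical cap estimate combined with independence of the codewords.

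Since $z$ and $y$ lie on the unit sphere, expanding the squared distance gives
$$\|\sqrt{1-2^{-2q}}\,z - y\|^2 = 2 - 2^{-2q} - 2\sqrt{1-2^{-2q}}\,\langle z, y\rangle,$$
so $Z^*$ is simply the codeword maximizing $\langle Z_i, y\rangle$. Writing $U = \max_i \langle Z_i, y\rangle$ and $D = \|\sqrt{1-2^{-2q}}\,Z^* - y\|^2$, I would define $\tau$ as the solution to the equation $2 - 2^{-2q} - 2\sqrt{1-2^{-2q}}\,\tau = 2^{-2q}(1+\nu(t))$. On the event $\{U \geq \tau\}$ we have $D \leq 2^{-2q}(1+\nu(t))$ by construction, while on $\{U < \tau\}$ the crude bound $D \leq 4$ applies. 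Hence
$$\mathbb{E}\, D \;\leq\; 2^{-2q}(1+\nu(t)) + 4\,\mathbb{P}(U < \tau).$$

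A direct algebraic computation gives $1 - \tau^2 \geq 2^{-2q}(1+\nu(t))(1 - o(1))$ as $q \to \infty$. By independence, $\mathbb{P}(U < \tau) = (1-P)^n \leq \exp(-nP)$, where $P = \mathbb{P}(\langle Z_1, y\rangle \geq \tau)$. Using that $\langle Z_1, y\rangle$ has density proportional to $(1-u^2)^{(t-3)/2}$, together with Stirling's formula for the normalization constant, the standard spherical cap lower bound yields
$$P \;\geq\; \frac{(1-\tau^2)^{(t-1)/2}}{\tau\sqrt{2\pi t}}\,(1-o(1)),$$
so with $n = 2^{qt}$,
$$nP \;\geq\; \frac{2^q\,(1+\nu(t))^{(t-1)/2}}{\sqrt{2\pi t}}\,(1-o(1)).$$

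It remains to verify that the specific form $\nu(t) = (6\log t + 7)/(t - 6\log t - 7)$, equivalent to $1+\nu(t) = t/(t - 6\log t - 7)$, forces $nP \geq 2t + \log 2$. Applying $-\log(1-x) \geq x$ gives
$$\log\bigl((1+\nu(t))^{(t-1)/2}\bigr) \;\geq\; \frac{(t-1)(6\log t+7)}{2t} \;=\; 3\log t + \tfrac{7}{2} - O\!\left(\tfrac{\log t}{t}\right),$$
so $(1+\nu(t))^{(t-1)/2} \geq t^3 e^{7/2}(1-o(1))$. Substituting back, $nP \geq (2^q\,t^{5/2}\,e^{7/2}/\sqrt{2\pi})(1-o(1))$, which comfortably exceeds $2t + \log 2$ once $t$ is in the range where $\nu(t)$ is well-defined and positive. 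Then $\mathbb{P}(U<\tau) \leq e^{-2t}/2$ and $4\,\mathbb{P}(U<\tau) \leq 2e^{-2t}$, yielding the claim. The main obstacle is the careful matching of constants: the polynomial factor $1/(\tau\sqrt{2\pi t})$ in the cap estimate, together with the $\sqrt{t}$ from Stirling, must be absorbed by the exponential gain $(1+\nu(t))^{(t-1)/2}$. The specific constants $6$ and $7$ in the definition of $\nu(t)$ are engineered precisely so that this absorption produces an $e^{-2t}$ tail with the stated leading constant.
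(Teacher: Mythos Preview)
Your overall strategy mirrors the paper's: threshold on the distortion level $2^{-2q}(1+\nu(t))$, bound the miss probability via a spherical cap estimate, and use independence to get an exponential tail. However, there is a genuine gap.

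\textbf{The small-$q$ regime is not covered.} Your key step ``$1-\tau^2 \geq 2^{-2q}(1+\nu(t))(1-o(1))$ as $q\to\infty$'' is only asymptotic in $q$, yet the lemma must hold for every $q\geq 0$ (in the application, $q=\tilde b_k$ can be arbitrarily small, even zero). When $q$ is small, $\tau$ need not lie in $[-1,1]$, and the cap bound with the factor $1/\tau$ breaks down. The paper handles this explicitly: it first proves the bound with an auxiliary $\zeta(t)$ under the restriction $1-2^{-2q}\geq \zeta(t)/2$, and then treats the complementary case $1-2^{-2q}<\zeta(t)/2$ by the trivial inequality
\[
\mathbb E\bigl\|\sqrt{1-2^{-2q}}Z^*-y\bigr\|^2 \le 2-2^{-2q}=2^{-2q}\bigl(1+2(2^{2q}-1)\bigr),
\]
checking that $2(2^{2q}-1)<\nu(t)$ in this regime. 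The final $\nu(t)$ is chosen slightly larger than $\zeta(t)$ precisely to absorb both cases.

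\textbf{The constants are not closed.} You carry several $(1-o(1))$ factors, some in $q$ and some in $t$, and then assert that the resulting lower bound on $nP$ ``comfortably exceeds $2t+\log 2$''. But the specific constants $6$ and $7$ in $\nu(t)$ come from a non-asymptotic chain of inequalities (explicit Stirling bounds $\sqrt{2\pi}z^{z+1/2}e^{-z}\le\Gamma(z+1)\le e\,z^{z+1/2}e^{-z}$, and the Sakrison-type cap lower bound with an explicit Gamma-ratio prefactor). Your asymptotic bookkeeping does not actually verify that the stated $\nu(t)$ suffices; a clean argument must track these constants without $o(1)$ slack, or else introduce an auxiliary $\zeta(t)$ as the paper does and only at the end inflate to $\nu(t)$.
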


Observe that
\begin{align*}
A_{k,1}&=\left\|\hat\theta_{(k)}-\lambda_k S_k Z_{(k)}\right\|^2\\
&=\left\|\lambda_k S_k\sqrt{1-2^{-2\tilde b_k}} \check Z_{(k)}-\lambda_k S_kZ_{(k)}\right\|^2\\
&=\lambda_k^2 S_k^2\left\|\sqrt{1-2^{-2\tilde b_k}} \check Z_{(k)}-Z_{(k)}\right\|^2.
\end{align*}
Then, it follows as a result of Lemma \ref{lem_ratedistortion} that 
\begin{align*}
\mathbb E \left(A_{k,1}\given Y_{(k)}\right)&\leq \frac{(S_k^2-T_k\eps^2)^2}{S_k^2}\left(2^{-2\tilde b_k}(1+\nu_\eps)+2e^{-2T_k}\right)\\
&\leq \frac{(S_k^2-T_k\eps^2)^2}{S_k^2}\left(2^{-2\tilde b_k}(1+\nu_\eps)+2e^{-2T_1}\right)\\
&\leq \frac{(S_k^2-T_k\eps^2)^2}{S_k^2}2^{-2\tilde b_k}(1+\nu_\eps)+\frac{2c^2}{(j_k\pi)^{2m}}\eps^2,
\end{align*}
where $\nu_\eps=\frac{6\log T_1+7}{T_1-6\log T_1-7}$.
Since $A_{k,2}$ only depends on $Y_{(k)}$, $\mathbb E \left(A_{k,2}\given Y_{(k)}\right)=A_{k,2}$. Next we consider the cross term $A_{k,3}$. Write $\gamma_k=\frac{\langle\theta_{(k)},Y_{(k)}\rangle}{\|Y_{(k)}\|^2}$ and
\begin{align*}
A_{k,3}&=2\left\langle\hat\theta_{(k)}-\lambda_k S_kZ_{(k)},\lambda_k S_kZ_{(k)}-\theta_{(k)}\right\rangle\\
&=\underbrace{2\left\langle\hat\theta_{(k)}-\lambda_k S_kZ_{(k)},\gamma_kY_{(k)}-\theta_{(k)}\right\rangle}_{A_{k,3a}}\nonumber\\
&\hspace{1in}+\underbrace{2\left\langle\hat\theta_{(k)}-\lambda_k S_kZ_{(k)},\lambda_k S_kZ_{(k)}-\gamma_kY_{(k)}\right\rangle}_{A_{k,3b}}.
\end{align*}
The quantity $\gamma_k$ is chosen such that $\langle Y_{(k)},\gamma_kY_{(k)}-\theta_{(k)}\rangle=0$ and therefore
\begin{align*}
A_{k,3a}&=2\left\langle\hat\theta_{(k)}-\lambda_k S_kZ_{(k)},\gamma_kY_{(k)}-\theta_{(k)}\right\rangle\\
&=2\left\langle\Pi_{Y_{(k)}^\perp}(\hat\theta_{(k)}-\lambda_k S_kZ_{(k)}),\gamma_kY_{(k)}-\theta_{(k)}\right\rangle
\end{align*}
where $\Pi_{Y_{(k)}^\perp}$ denotes the projection onto the orthogonal complement of $Y_{(k)}$.
Due to the choice of $\check Z_{(k)}$, the projection $\Pi_{Y_{(k)}^\perp}(\hat\theta_{(k)}-\lambda_k S_kZ_{(k)})$ is rotation symmetric and hence $\mathbb E\left(A_{k,3a}\given Y_{(k)}\right)=0$. Finally, for $A_{k,3b}$ we have
\begin{align*}
&\hspace{-0.3in}\mathbb E\left(A_{k,3b}\given Y_{(k)}\right)\nonumber\\
&\leq 2\|\lambda_k S_kZ_{(k)}-\gamma_kY_{(k)}\|\mathbb E\left(\|\hat\theta_{(k)}-\lambda_k S_kZ_{(k)}\|\given Y_{(k)}\right)\\
&\leq 2\|\lambda_k S_kZ_{(k)}-\gamma_kY_{(k)}\|\sqrt{\mathbb E\left(\|\hat\theta_{(k)}-\lambda_k S_kZ_{(k)}\|^2\given Y_{(k)}\right)}\\
&\leq 2\|\lambda_k S_kZ_{(k)}-\gamma_kY_{(k)}\|\sqrt{\frac{(S_k^2-T_k\eps^2)^2}{S_k^2}2^{-2\tilde b_k}(1+\nu_\eps)+\frac{2c^2}{(j_k\pi)^{2m}}\eps^2}.
\end{align*}
Combining all the analyses above, we have
\begin{equation*}
\begin{aligned}
&\mathbb E\left(A_k\given Y_{(k)}\right)\\
&\leq \frac{(S_k^2-T_k\eps^2)^2}{S_k^2}2^{-2\tilde b_k}(1+\nu_\eps)+\frac{2c^2}{(j_k\pi)^{2m}}\eps^2+\|\lambda_k S_kZ_{(k)}-\theta_{(k)}\|^2\\
&\hspace{0.2in}+2\|\lambda_k S_kZ_{(k)}-\gamma_kY_{(k)}\|\sqrt{\frac{(S_k^2-T_k\eps^2)^2}{S_k^2}2^{-2\tilde b_k}(1+\nu_\eps)+\frac{2c^2}{(j_k\pi)^{2m}}\eps^2},
\end{aligned}
\end{equation*}
and summing over $k$ we get
\begin{equation}
\begin{aligned}
&\mathbb E\left(\|\check\theta-\theta\|^2\given Y\right)\\
&\leq \sum_{k=1}^K\frac{(S_k^2-T_k\eps^2)^2}{S_k^2}2^{-2\tilde b_k}(1+\nu_\eps)+\sum_{k=1}^K\|\lambda_k S_kZ_{(k)}-\theta_{(k)}\|^2\\
&\hspace{0.2in}+2\sum_{k=1}^K\|\lambda_k S_kZ_{(k)}-\gamma_kY_{(k)}\|\sqrt{\frac{(S_k^2-T_k\eps^2)^2}{S_k^2}2^{-2\tilde b_k}(1+\nu_\eps)+O(\eps^2)}+O(K\eps^2).
\end{aligned}\label{eqn_step2summary}
\end{equation}

\subsection*{Step 3. Expectation over data}
First we will state three lemmas, which bound the deviation of the expectation of some particular functions of the norm of a Gaussian vector to the desired quantities.  The proofs are given in Section \ref{sec_lemmaproof}.
\begin{lemma}\label{lem_oracle1}
Suppose that $X_i\sim \mathcal N(\theta_i,\sigma^2)$ independently for $i=1,\dots,n$, where $\|\theta\|^2\leq c^2$. Let $S$ be given by
\begin{equation*}
S=\begin{cases}
\sqrt{n\sigma^2}& \text{if }\|X\|<\sqrt{n\sigma^2}\\
\sqrt{n\sigma^2}+c & \text{if }\|X\|>\sqrt{n\sigma^2}+c\\
\|X\|& \text{otherwise}.
\end{cases}
\end{equation*}
Then there exists some absolute constant $C_0$ such that
\begin{equation*}
\mathbb E\left(\frac{S^2-n\sigma^2}{S}-\frac{\langle\theta,X\rangle}{\|X\|}\right)^2\leq C_0\sigma^2.
\end{equation*}
\end{lemma}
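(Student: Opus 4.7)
The plan is to partition the sample space according to which branch of the definition of $S$ is active, namely
\begin{align*}
E_0 &= \{\sqrt{n\sigma^2} \leq \|X\| \leq \sqrt{n\sigma^2}+c\},\\
E_1 &= \{\|X\| < \sqrt{n\sigma^2}\},\\
E_2 &= \{\|X\| > \sqrt{n\sigma^2}+c\},
\end{align*}
and to bound the contribution of $\mathbb{E}(\cdot)^2$ on each event separately. A key preliminary step is the orthogonal decomposition: assuming $\theta \neq 0$ (the case $\theta=0$ reduces to a direct chi-square calculation), write $X = Ru + V$ with $u = \theta/\|\theta\|$, so that $R = \langle u, X\rangle \sim \mathcal{N}(\|\theta\|,\sigma^2)$, $V \perp u$ with $\|V\|^2 = \sigma^2 W$ for $W \sim \chi^2_{n-1}$ independent of $R$. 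Setting $\epsilon_R = (R-\|\theta\|)/\sigma$, this gives the identity $\|X\|^2 - n\sigma^2 - \langle\theta, X\rangle = \sigma R\epsilon_R + \sigma^2(W-n) - \sigma^2$ and $\langle\theta,X\rangle = R\|\theta\|$, $\|X\|^2 = R^2 + \sigma^2 W$.

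On $E_0$ we have $S = \|X\|$, and the difference equals the displayed numerator divided by $\|X\|$. Using $(a+b+c)^2 \leq 3(a^2+b^2+c^2)$ together with the crude but effective bounds $\|X\|^2 \geq R^2$ and $\|X\|^2 \geq \sigma^2 W$, the first term is controlled by $\mathbb{E}[\sigma^2 R^2\epsilon_R^2/\|X\|^2] \leq \sigma^2\mathbb{E}[\epsilon_R^2] = \sigma^2$, the second by $\mathbb{E}[\sigma^4(W-n)^2/\|X\|^2] \leq \sigma^2\mathbb{E}[(W-n)^2/W] = O(\sigma^2)$ (a direct calculation with $W \sim \chi^2_{n-1}$ valid for $n \geq 4$; the low-dimensional cases are handled by a separate elementary argument), and the residual $\sigma^4/\|X\|^2$ term is $O(\sigma^2/n)$. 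Cross terms are absorbed by AM--GM.

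On $E_1$ the truncation forces $(S^2-n\sigma^2)/S = 0$, so the squared error reduces to $R^2\|\theta\|^2/\|X\|^2 \leq \|\theta\|^2$. The trivial bound $\|\theta\|^2 \mathbb{P}(E_1)$ is not sharp enough in the intermediate regime $\|\theta\|^2 \sim \sqrt{n}\,\sigma^2$. Instead I apply Cauchy--Schwarz,
\begin{equation*}
\mathbb{E}\!\left[\tfrac{R^2\|\theta\|^2}{\|X\|^2}\mathbf{1}_{E_1}\right] \leq \|\theta\|^2\sqrt{\mathbb{E}[R^4/\|X\|^4]}\sqrt{\mathbb{P}(E_1)},
\end{equation*}
and combine the fourth-moment estimate $\mathbb{E}[R^4/\|X\|^4] \lesssim (\|\theta\|^4 + \sigma^4)/(n^2\sigma^4)$ (obtained from $\|X\|^2 \geq \sigma^2 W$ and $\mathbb{E}[1/W^2] = O(1/n^2)$) with a Birg\'e--Massart lower-tail inequality for the non-central chi-square $\|X\|^2/\sigma^2$, which yields $\mathbb{P}(E_1) \leq \exp(-\|\theta\|^4/(4\sigma^2(n\sigma^2 + 2\|\theta\|^2)))$. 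Substituting $\mu = \|\theta\|^2/\sigma^2$, the resulting product has the form $\sigma^2\cdot\mu^2/n\cdot\exp(-c\,\mu^2/(n+\mu))$, which is bounded by an absolute constant uniformly in $\mu$ and $n$.

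On $E_2$, set $h(t) = (t^2-n\sigma^2)/t$ and $f_c = h(\sqrt{n\sigma^2}+c)$; the difference is $f_c - R\|\theta\|/\|X\|$, which I split as $(f_c - h(\|X\|)) + (h(\|X\|) - R\|\theta\|/\|X\|)$. The second piece is exactly the $E_0$ analysis of Step 2 restricted to $E_2$. For the first, monotonicity of $h$ and the fact that $E_2$ forces $\|X\|^2$ to deviate above its mean $\|\theta\|^2 + n\sigma^2$ by at least $2\sqrt{n}\,\sigma c$ give an analogous Birg\'e--Massart upper-tail bound that makes the contribution $O(\sigma^2)$. The main obstacle throughout is Step 3 in the ``balanced'' regime $\|\theta\|^2 \asymp \sqrt{n}\,\sigma^2$, since only the combination of a fourth-moment inequality with the non-central chi-square concentration delivers an absolute constant rather than a bound growing with $n$ or $c$.
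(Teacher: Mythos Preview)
Your approach is correct but differs substantially from the paper's. The paper does \emph{not} partition according to the three branches of $S$. Instead it first proves the monotonicity reduction
\[
\mathbb E\Bigl(\tfrac{S^2-n\sigma^2}{S}-\tfrac{\langle\theta,X\rangle}{\|X\|}\Bigr)^2
\;\le\;
\mathbb E\Bigl(\tfrac{\|X\|^2-n\sigma^2}{\|X\|}-\tfrac{\langle\theta,X\rangle}{\|X\|}\Bigr)^2,
\]
i.e.\ clipping can only help. This is shown by conditioning on $\|X\|=r$: for $r<\sqrt{n\sigma^2}$ it reduces to $\mathbb E_r\langle\theta,X\rangle\ge 0$ (a known fact about spherically conditioned Gaussians), and for $r>\sqrt{n\sigma^2}+c$ it reduces to $\mathbb E_r\|X-\theta\|^2\ge n\sigma^2$, which is immediate since $\|X-\theta\|\ge \|X\|-\|\theta\|>\sqrt{n\sigma^2}$ there. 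After this reduction the paper rescales to $\sigma=1$, writes $Z=X-\theta$, $\xi=\|\theta\|^2$, bounds $(\|Z+\theta\|^2-n-\xi)^2/\|Z+\theta\|^2$ and $\langle\theta,Z\rangle^2/\|Z+\theta\|^2$ separately, and closes each with the inverse non\nobreakdash-central $\chi^2$ moment bounds $\mathbb E[W_{n,\xi}^{-1}]\le 1/(n+\xi-4)$ and $\mathbb E[W_{n,\xi}^{-2}]\le (n-4)/((n-6)(n+\xi-4)(n+\xi-6))$.

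What each route buys: the paper's clipping-helps reduction eliminates the truncation in one stroke, so no tail probabilities or concentration inequalities are needed---only the two inverse moment identities. Your route avoids having to spot that monotonicity, but the price is exactly the difficulty you flag on $E_1$: in the regime $\|\theta\|^2\asymp\sqrt{n}\,\sigma^2$ the naive bound $\|\theta\|^2\,\mathbb P(E_1)$ blows up, forcing the Cauchy--Schwarz\,+\,Laurent--Massart device. Your $E_2$ piece can in fact be done without Birg\'e--Massart, using only that $\|X\|$ is $1$-Lipschitz (Gaussian Poincar\'e gives $\mathrm{Var}\,\|X\|\le\sigma^2$) together with $h'(t)\le 2$ on $t\ge\sqrt{n\sigma^2}$ and $\sqrt{n\sigma^2}+c\ge \sqrt{n\sigma^2+\|\theta\|^2}\ge\mathbb E\|X\|$; this would lighten your argument slightly.
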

\begin{lemma}\label{lem_oracle2}
Let $X$ and $S$ be the same as defined in Lemma \ref{lem_oracle1}.
Then for $n>4$
\begin{equation*}
\mathbb E\frac{(S^2-n\sigma^2)^2}{S^2}\leq \frac{\|\theta\|^4}{\|\theta\|^2+n\sigma^2}+\frac{4n}{n-4}\sigma^2.
\end{equation*}
\end{lemma}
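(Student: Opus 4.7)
The plan is twofold: first reduce the bound on $S$ to one involving only $\|X\|$, and then prove a sharp upper bound on $\mathbb{E}[1/\|X\|^2]$ for a non-central chi-squared. For the reduction, note that $g(y) := (y - n\sigma^2)^2/y$ has derivative $g'(y) = (y - n\sigma^2)(y + n\sigma^2)/y^2 \ge 0$ on $y \ge n\sigma^2$, so $g$ is nondecreasing there. Splitting into the three cases defining $S$: when $\|X\|^2 < n\sigma^2$ the integrand vanishes; when $S = \|X\|$ the integrand equals $g(\|X\|^2)$; and when $S = \sqrt{n\sigma^2} + c$ we have $n\sigma^2 \le S^2 \le \|X\|^2$, so $g(S^2) \le g(\|X\|^2)$ by monotonicity. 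Thus $(S^2 - n\sigma^2)^2/S^2 \le (\|X\|^2 - n\sigma^2)^2/\|X\|^2$ pointwise, and it suffices to bound the expectation of the right-hand side.

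Expanding and using $\mathbb{E}\|X\|^2 = \|\theta\|^2 + n\sigma^2$ gives
\[
\mathbb{E}\bigl[(\|X\|^2 - n\sigma^2)^2/\|X\|^2\bigr] = \|\theta\|^2 - n\sigma^2 + n^2\sigma^4\,\mathbb{E}[1/\|X\|^2],
\]
reducing the problem to a sharp bound on $\mathbb{E}[1/\|X\|^2]$. With $\lambda = \|\theta\|^2/\sigma^2$, I will use the Poisson mixture representation $\|X\|^2/\sigma^2 \stackrel{d}{=} \chi^2_{n+2J}$ with $J \sim \mathrm{Poisson}(\lambda/2)$, together with the integral identity $1/(n+2j-2) = \int_0^1 t^{n+2j-3}\,dt$ valid for $n > 2$. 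Interchanging the Poisson sum and the integral, summing the exponential series, and substituting $u = 1 - t^2$ produces the closed form
\[
\mathbb{E}[1/\|X\|^2] = \frac{1}{2\sigma^2}\int_0^1 (1-u)^{(n-4)/2}\,e^{-\lambda u/2}\,du.
\]
Using $(1-u)^{(n-4)/2} \le e^{-(n-4)u/2}$ (from $\log(1-u) \le -u$, valid for $n \ge 4$) and integrating the single exponential yields
\[
\mathbb{E}[1/\|X\|^2] \le \frac{1}{(n-4)\sigma^2 + \|\theta\|^2}.
\]

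To conclude, the identity
\[
\frac{1}{(n-4)\sigma^2 + \|\theta\|^2} = \frac{1}{n\sigma^2 + \|\theta\|^2} + \frac{4\sigma^2}{[(n-4)\sigma^2 + \|\theta\|^2]\,[n\sigma^2 + \|\theta\|^2]}
\]
together with $[(n-4)\sigma^2 + \|\theta\|^2][n\sigma^2 + \|\theta\|^2] \ge n(n-4)\sigma^4$ yields $\mathbb{E}[1/\|X\|^2] \le 1/(\|\theta\|^2 + n\sigma^2) + 4/[n(n-4)\sigma^2]$. Plugging this back into the expansion and invoking the algebraic simplification $\|\theta\|^2 - n\sigma^2 + n^2\sigma^4/(\|\theta\|^2 + n\sigma^2) = \|\theta\|^4/(\|\theta\|^2 + n\sigma^2)$ produces the claimed bound.

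The hard part will be obtaining the sharp $\theta$-dependent bound on $\mathbb{E}[1/\|X\|^2]$: uniform-in-$\theta$ estimates such as $\mathbb{E}[1/\|X\|^2] \le 1/((n-2)\sigma^2)$ yield a leading term linear in $\|\theta\|^2$ rather than the desired $\|\theta\|^4/(\|\theta\|^2 + n\sigma^2)$, so the $\|\theta\|^2$-dependence must be built into the inverse-moment bound itself. The integral representation combined with the elementary exponential bound $(1-u)^{(n-4)/2} \le e^{-(n-4)u/2}$ is the key technical ingredient, as it yields the interpolation $1/[(n-4)\sigma^2 + \|\theta\|^2]$ that behaves like $1/((n-4)\sigma^2)$ for small $\|\theta\|$ and like $1/\|\theta\|^2$ for large $\|\theta\|$.
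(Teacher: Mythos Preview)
Your proof is correct and follows the same overall architecture as the paper: reduce from $S$ to $\|X\|$ via monotonicity of $y\mapsto (y-n\sigma^2)^2/y$, expand to isolate $n^2\sigma^4\,\mathbb E[1/\|X\|^2]$, apply the sharp inverse noncentral chi-squared bound $\mathbb E[1/\|X\|^2]\le 1/((n-4)\sigma^2+\|\theta\|^2)$, and finish with the identical algebra.

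The only genuine point of departure is how you obtain that inverse-moment bound. The paper (Lemma~\ref{lem_inversechisq}) uses the Poisson-mixture representation to get a Jensen \emph{lower} bound $\mathbb E[W_{n,\xi}^{-1}]\ge 1/(n+\xi-2)$ and then bootstraps it into the desired \emph{upper} bound via the recurrence $1=\xi\,\mathbb E[W_{n+4,\xi}^{-1}]+n\,\mathbb E[W_{n+2,\xi}^{-1}]$ cited from \cite{chattamvelli1995recurrence}. Your route---writing $1/(n+2j-2)=\int_0^1 t^{n+2j-3}\,dt$, summing the Poisson weights to get the closed form $\frac{1}{2\sigma^2}\int_0^1 (1-u)^{(n-4)/2}e^{-\lambda u/2}\,du$, and applying $(1-u)^{(n-4)/2}\le e^{-(n-4)u/2}$---is more direct and fully self-contained, avoiding the external recurrence reference. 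Both arrive at exactly the same bound, so the remainder of the argument coincides with the paper's.
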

\begin{lemma}\label{lem_oracle3}
Let $X$ and $S$ be the same as defined in Lemma \ref{lem_oracle1}. Define 
\begin{equation*}
\hat\theta_+ =\left(\frac{\|X\|^2-n\sigma^2}{\|X\|^2}\right)_+X,\quad \hat\theta_\dag = \frac{S^2-n\sigma^2}{S\|X\|}X.
\end{equation*}
Then
\begin{equation*}
\mathbb E\|\hat\theta_\dag-\theta\|^2\leq\mathbb E\|\hat\theta_+-\theta\|^2\leq \frac{n\sigma^2\|\theta\|^2}{\|\theta\|^2+n\sigma^2}+4\sigma^2.
\end{equation*}
\end{lemma}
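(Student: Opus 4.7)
The plan is to prove the two inequalities separately. For the first inequality $\mathbb{E}\|\hat\theta_\dag-\theta\|^2\le\mathbb{E}\|\hat\theta_+-\theta\|^2$, I would establish it pointwise in $X$. Both estimators are nonnegative scalar multiples of $X/\|X\|$: write $\hat\theta_+=\beta_+X/\|X\|$ with $\beta_+=(\|X\|^2-n\sigma^2)_+/\|X\|$ and $\hat\theta_\dag=\beta_\dag X/\|X\|$ with $\beta_\dag=(S^2-n\sigma^2)/S$. Setting $t=\langle X,\theta\rangle/\|X\|$, the identity
\[
\bigl\|\beta X/\|X\|-\theta\bigr\|^2=(\beta-t)^2+\|\theta\|^2-t^2
\]
reduces the pointwise loss comparison to $(\beta_\dag-t)^2\le(\beta_+-t)^2$. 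Split on the three regions in the definition of $S$: if $\|X\|<\sqrt{n\sigma^2}$ both estimators vanish, and if $\sqrt{n\sigma^2}\le\|X\|\le\sqrt{n\sigma^2}+c$ then $S=\|X\|$ and $\beta_+=\beta_\dag$. The only nontrivial case is $\|X\|>\sqrt{n\sigma^2}+c$; there $S=\sqrt{n\sigma^2}+c<\|X\|$, and monotonicity of $s\mapsto(s^2-n\sigma^2)/s$ on $(\sqrt{n\sigma^2},\infty)$ gives $\beta_+>\beta_\dag$, while a direct computation yields $\beta_\dag=c(2\sqrt{n\sigma^2}+c)/(\sqrt{n\sigma^2}+c)\ge c$. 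Since $|t|\le\|\theta\|\le c\le\beta_\dag<\beta_+$, both $\beta_+$ and $\beta_\dag$ lie on the same side of $t$ with $\beta_\dag$ the closer, so $(\beta_\dag-t)^2\le(\beta_+-t)^2$, as required.

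For the upper bound on $\mathbb{E}\|\hat\theta_+-\theta\|^2$, I would invoke the classical fact (Baranchik, Stein) that the positive-part James--Stein estimator dominates the plain James--Stein estimator $\hat\theta_{JS}=(1-n\sigma^2/\|X\|^2)X$ in mean squared error. Stein's SURE identity applied to $\hat\theta_{JS}$ (using $\mathrm{div}(X/\|X\|^2)=(n-2)/\|X\|^2$) yields
\[
\mathbb{E}\|\hat\theta_{JS}-\theta\|^2=n\sigma^2-n(n-4)\sigma^4\,\mathbb{E}\bigl[\|X\|^{-2}\bigr],
\]
and Jensen's inequality for the convex map $u\mapsto 1/u$ gives $\mathbb{E}\|X\|^{-2}\ge 1/\mathbb{E}\|X\|^2=1/(\|\theta\|^2+n\sigma^2)$. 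Substituting and simplifying,
\[
\mathbb{E}\|\hat\theta_+-\theta\|^2\le n\sigma^2\cdot\frac{\|\theta\|^2+4\sigma^2}{\|\theta\|^2+n\sigma^2}\le\frac{n\sigma^2\|\theta\|^2}{\|\theta\|^2+n\sigma^2}+4\sigma^2,
\]
using $n\sigma^2\le\|\theta\|^2+n\sigma^2$ in the last step.

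The main technical obstacle is the small-$n$ regime ($n\le 4$), where Stein's formula yields a non-informative correction and $\mathbb{E}\|X\|^{-2}$ may fail to be finite; a separate, cruder argument is needed there. Since the lemma is applied with $n=T_k\ge T_1=\lceil\log(1/\eps)\rceil\to\infty$, this case is immaterial for the main theorem, so I would simply flag it and continue. The only other delicate point is justifying the Stein integration-by-parts for $\hat\theta_{JS}$ despite the singularity of $\|X\|^{-2}$ at the origin, which is standard for $n\ge 5$ because $\mathbb E\|X\|^{-2}$ is then finite and the boundary terms vanish.
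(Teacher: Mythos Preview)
Your proof is correct, and for the first inequality it takes a genuinely different route from the paper. The paper argues at the level of conditional expectations given $\|X\|=r$: it shows that $\mathbb E_r\|\hat\theta_\dag-\theta\|^2\le\mathbb E_r\|\hat\theta_+-\theta\|^2$ by reducing to $(g_\dag(r)+g_+(r))r^2\ge 2\mathbb E_r\langle X,\theta\rangle$ and then invoking the lower bound $\mathbb E_r\|X-\theta\|^2\ge n\sigma^2$ (which uses $r>\sqrt{n\sigma^2}+c\ge\|\theta\|$). Your argument is strictly stronger and simpler: you observe that on the only nontrivial region $\|X\|>\sqrt{n\sigma^2}+c$ one has $\beta_\dag\ge c\ge |t|$ by Cauchy--Schwarz, so the loss comparison $(\beta_\dag-t)^2\le(\beta_+-t)^2$ holds \emph{pointwise} in $X$, not just in conditional expectation. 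This pointwise dominance is a cleaner statement and avoids the detour through $\mathbb E_r\langle X,\theta\rangle$. For the second inequality the two approaches coincide in substance: the paper simply cites Lemma~3.10 of Tsybakov, and your SURE-plus-Jensen derivation is exactly a standard proof of that lemma (valid for $n\ge 5$). Your caveat about small $n$ is appropriate and matches the paper's usage, where the block sizes satisfy $T_k\ge T_1=\lceil\log(1/\eps)\rceil\to\infty$.
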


We now take the expectation with respect to the data on both sides of
\eqref{eqn_step2summary}. First, by the Cauchy-Schwarz inequality
\begin{equation}
\begin{aligned}
&\mathbb E\left(\|\lambda_k S_kZ_{(k)}-\gamma_kY_{(k)}\|\sqrt{\frac{(S_k^2-T_k\eps^2)^2}{S_k^2}2^{-2\tilde b_k}(1+\nu_\eps)+O(\eps^2)}\right)\\
&\leq \sqrt{\mathbb E\|\lambda_k S_kZ_{(k)}-\gamma_kY_{(k)}\|^2}\sqrt{\mathbb E\left({\frac{(S_k^2-T_k\eps^2)^2}{S_k^2}2^{-2\tilde b_k}(1+\nu_\eps)+O(\eps^2})\right)}.
\end{aligned}\label{eqn_crosstermcauchy}
\end{equation}
We then calculate
\begin{align*}
&\hspace{-0.2in}\sqrt{\mathbb E\|\lambda_k S_kZ_{(k)}-\gamma_kY_{(k)}\|^2}\nonumber\\
&=\sqrt{\mathbb E\left\|\frac{S_k^2-T_k\eps^2}{S_k}\frac{Y_{(k)}}{\|Y_{(k)}\|}-\frac{\langle\theta_{(k)},Y_{(k)}\rangle}{\|Y_{(k)}\|}\frac{Y_{(k)}}{\|Y_{(k)}\|}\right\|^2}\\
&=\sqrt{\mathbb E\left(\frac{S_k^2-T_k\eps^2}{S_k}-\frac{\langle\theta_{(k)},Y_{(k)}\rangle}{\|Y_{(k)}\|}\right)^2}\\
&\leq C_0\eps,
\end{align*}
where the last inequality is due to Lemma \ref{lem_oracle1}, and $C_0$ is the constant therein.
Plugging this in \eqref{eqn_crosstermcauchy} and summing over $k$, we get
\begin{align*}
&\hspace{-.5in}\sum_{k=1}^K\mathbb E\left(\|\lambda_k S_kZ_{(k)}-\gamma_kY_{(k)}\|\sqrt{\frac{(S_k^2-T_k\eps^2)^2}{S_k^2}2^{-2\tilde b_k}(1+\nu_\eps)+O(\eps^2)}\right)\nonumber\\
&\leq C_0\eps\sum_{k=1}^K\sqrt{\mathbb E\left({\frac{(S_k^2-T_k\eps^2)^2}{S_k^2}2^{-2\tilde b_k}(1+\nu_\eps)+O(\eps^2)}\right)}\\
&\leq C_0\sqrt{K}\eps\sqrt{\mathbb E\sum_{k=1}^K\frac{(S_k^2-T_k\eps^2)^2}{S_k^2}2^{-2\tilde b_k}(1+\nu_\eps)+O(K\eps^2)}.
\end{align*}
Therefore,
\begin{equation*}
\begin{aligned}
&\mathbb E\|\check\theta-\theta\|^2\\
&\leq \underbrace{\mathbb E\sum_{k=1}^K\frac{(S_k^2-T_k\eps^2)^2}{S_k^2}2^{-2\tilde b_k}}_{B_1}(1+\nu_\eps)+\underbrace{\mathbb E\sum_{k=1}^K\|\lambda_k S_kZ_{(k)}-\theta_{(k)}\|^2}_{B_2}\\
&\hspace{0.2in}+C_0\sqrt{K}\eps\sqrt{\mathbb E\sum_{k=1}^K\frac{(S_k^2-T_k\eps^2)^2}{S_k^2}2^{-2\tilde b_k}(1+\nu_\eps)+O(K\eps^2)}\\
&\hspace{0.2in}+O(K\eps^2).
\end{aligned}
\end{equation*}
Now we deal with the term $B_1$. Recall that the sequence $\tilde b$ solves problem \eqref{allocationproblem}, so for any sequence $b\in\Pi_{\text{blk}}$
\begin{equation*}
\sum_{k=1}^K\frac{(\check S_k^2-T_k\eps^2)^2}{\check S_k^2}2^{-2\tilde b_k}\leq \sum_{k=1}^K\frac{(\check S_k^2-T_k\eps^2)^2}{\check S_k^2}2^{-2 \bar b_k}.
\end{equation*}
Notice that 
\begin{align*}
&\left|\frac{(\check S_k^2-T_k\eps^2)^2}{\check S_k^2}-\frac{(S_k^2-T_k\eps^2)^2}{S_k^2}\right|=\left|\check S_k^2-S_k^2\right|\left|\frac{\check S_k^2S_k^2-T_k\eps^2}{\check S_k^2S_k^2}\right|=O(\eps^2)
\end{align*}
and thus,
\begin{equation*}
\sum_{k=1}^K\frac{(S_k^2-T_k\eps^2)^2}{S_k^2}2^{-2\tilde b_k}\leq \sum_{k=1}^K\frac{(S_k^2-T_k\eps^2)^2}{S_k^2}2^{-2 \bar b_k}+O(K\eps^2).
\end{equation*}
Taking the expectation, we get
\begin{equation*}
\mathbb E\sum_{k=1}^K\frac{(S_k^2-T_k\eps^2)^2}{S_k^2}2^{-2\tilde b_k}\leq \sum_{k=1}^K\mathbb E\frac{(S_k^2-T_k\eps^2)^2}{S_k^2}2^{-2\bar b_k}+O(K\eps^2).
\end{equation*}
Applying Lemma \ref{lem_oracle2}, we get for $T_k>4$
\begin{equation*}
\mathbb E\frac{(S_k^2-T_k\eps^2)^2}{S_k^2}\leq \frac{\|\theta_{(k)}\|^4}{\|\theta_{(k)}\|^2+T_k\eps^2}+\frac{4T_k}{T_k-4}\eps^2
\end{equation*}
and it follows that
\begin{equation*}
\mathbb E\sum_{k=1}^K\frac{(S_k^2-T_k\eps^2)^2}{S_k^2}2^{-2\tilde b_k}
\leq \sum_{k=1}^K\frac{\|\theta_{(k)}\|^4}{\|\theta_{(k)}\|^2+T_k\eps^2}2^{-2\bar b_k}+O(K\eps^2).
\end{equation*}
Since $b\in\Pi_{\text{blk}}$ is arbitrary,
\begin{align*}
\mathbb E\sum_{k=1}^K\frac{(S_k^2-T_k\eps^2)^2}{S_k^2}2^{-2\tilde b_k}&\leq \min_{b\in\Pi_{\text{blk}}}\sum_{k=1}^K\frac{\|\theta_{(k)}\|^4}{\|\theta_{(k)}\|^2+T_k\eps^2}2^{-2\bar b_k}+O(K\eps^2).
\end{align*}
Turning to the term $B_2$, as a result of Lemma \ref{lem_oracle3} we have
\begin{align*}
\|\lambda_k S_kZ_{(k)}-\theta_{(k)}\|^2\leq \frac{\|\theta_{(k)}\|^2T_k\eps^2}{\|\theta_{(k)}\|^2+T_k\eps^2}+4\eps^2.
\end{align*}
Combining the above results, we have shown that
\begin{align}
\mathbb E\|\check\theta-\theta\|^2\leq M+O(K\eps^2)+C_0\sqrt{K}\eps\sqrt{M+O(K\eps^2)}\label{eqn_upperboundbyM}
\end{align}
where 
\begin{align*}
M&=(1+\nu_\eps)\min_{b\in\Pi_{\text{blk}}(B)}\sum_{k=1}^K\frac{\|\theta_{(k)}\|^4}{\|\theta_{(k)}\|^2+T_k\varepsilon^2} 2^{-2 \bar b_k}+\sum_{k=1}^K\frac{\|\theta_{(k)}\|^2T_k\varepsilon^2}{\|\theta_{(k)}\|^2+T_k\varepsilon^2}\\
&=(1+\nu_\eps)\min_{b\in\Pi_{\text{blk}}(B)}\sum_{k=1}^K\frac{\|\theta_{(k)}\|^4}{\|\theta_{(k)}\|^2+T_k\varepsilon^2}2^{-2
  \bar b_k}\\
&\qquad{} +\min_{\omega\in\Omega_{\text{blk}}}\sum_{k=1}^K\left((1-\bar\omega_k)^2\|\theta_{(k)}\|^2+\bar\omega_k^2T_k\eps^2\right).
\end{align*}

\subsection*{Step 4. Blockwise constant is almost optimal} 
We now show that in terms of both bit allocation and weight assignment, blockwise constant is almost optimal. 
Let's first consider bit allocation. 
Let $B'=\frac{1}{1+3\rho_\eps}(B-T_1b_{\max})$. We are going to show that 
\begin{equation}
\min_{b\in\Pi_{\text{blk}}(B)}\sum_{k=1}^K\frac{\|\theta_{(k)}\|^4}{\|\theta_{(k)}\|^2+T_k\varepsilon^2}2^{-2 \bar b_k}\leq \min_{b\in\Pi_{\text{mon}}(B')}\sum_{j=1}^N\frac{\theta_j^4}{\theta_j^2+\eps^2}2^{-2 b_j}.\label{eqn_blkleqmon1}
\end{equation}
In fact, suppose that $b^*\in\Pi_{\text{mon}}(B')$ achieves the minimum on the right hand side, and define $b^\star$ by 
\begin{equation*}
b^\star_j=\begin{cases}
\max_{i\in B_k}b^*_i &j\in B_k\\
0 &j\geq N
\end{cases}.
\end{equation*}
The sum of the elements in $b^\star$ then satisfies
\begin{align*}
\sum_{j=1}^\infty b^\star_j&=\sum_{k=0}^{K-1}T_{k+1}\max_{j\in B_{k+1}}b^*_j\\
&=T_1b^\star_1+\sum_{k=1}^{K-1}T_{k+1}\max_{j\in B_{k+1}}b^*_j\\
&\leq T_1b_{\max}+\sum_{k=1}^{K-1}\frac{T_{k+1}}{T_k}\sum_{j\in B_k}b^*_j\\
&\leq T_1b_{\max}+(1+3\rho_\eps)\sum_{k=1}^{K-1}\sum_{j\in B_k}b^*_j\\
&\leq T_1b_{\max}+(1+3\rho_\eps)B'\\
&=B,
\end{align*}
which means that $b^\star\in\Pi_{\text{blk}}(B)$. It then follows that
\begin{align*}
&\min_{b\in\Pi_{\text{blk}}(B)}\sum_{k=1}^K\frac{\|\theta_{(k)}\|^4}{\|\theta_{(k)}\|^2+T_k\varepsilon^2}2^{-2 \bar b_k}\\
&\leq \sum_{k=1}^K\frac{\|\theta_{(k)}\|^4}{\|\theta_{(k)}\|^2+T_k\varepsilon^2}2^{-2 \bar b^\star_k}\\
&\leq \sum_{k=1}^K\sum_{j\in B_k}\frac{\theta_j^4}{\theta_j^2+\eps^2}2^{-2 b^\star_j}\numberthis\label{eqn_jensen}\\
&= \sum_{j=1}^N\frac{\theta_j^4}{\theta_j^2+\eps^2}2^{-2 b^*_j}\\
&= \min_{b\in\Pi_{\text{mon}}(B')}\sum_{j=1}^N\frac{\theta_j^4}{\theta_j^2+\eps^2}2^{-2 b_j},
\end{align*}
where \eqref{eqn_jensen} is due to Jensen's inequality on the convex function $\frac{x^2}{x+\eps^2}$
\begin{equation*}
\frac{\left(\frac{1}{T_k}\|\theta_{(k)}\|^2\right)^2}{\frac{1}{T_k}\|\theta_{(k)}\|^2+\eps^2}\leq \frac{1}{T_k}\sum_{j\in B_k}\frac{\theta_j^4}{\theta_j^2+\eps^2}.
\end{equation*}
Next, for the weights assignment, by Lemma 3.11 in \cite{tsybakov2008introduction}, we have
\begin{equation}\begin{aligned}
\min_{\omega\in\Omega_{\text{blk}}}\sum_{k=1}^K&\left((1-\bar\omega_k)^2\|\theta_{(k)}\|^2+\bar\omega_k^2T_k\eps^2\right)\\
&\leq (1+3\rho_\eps)\left(\min_{\omega\in\Omega_{\text{mon}}}\sum_{k=1}^K\left((1-\omega_j)^2\theta_j^2+\omega_j^2\eps^2\right)\right)+T_1\eps^2.\label{eqn_blkleqmon2}
\end{aligned}
\end{equation}
Combining \eqref{eqn_blkleqmon1} and \eqref{eqn_blkleqmon2}, we get
\begin{align*}
M&=(1+\nu_\eps)\min_{b\in\Pi_{\text{blk}}(B)}\sum_{k=1}^K\frac{\|\theta_{(k)}\|^4}{\|\theta_{(k)}\|^2+T_k\varepsilon^2}2^{-2 \bar b_k}\\
&\hspace{.3in}+\min_{\omega\in\Omega_{\text{blk}}}\sum_{k=1}^K\left((1-\bar\omega_k)^2\|\theta_{(k)}\|^2+\bar\omega_k^2T_k\eps^2\right)\nonumber\\
&\leq (1+\nu_\eps)\min_{b\in\Pi_{\text{blk}}(B)}\sum_{k=1}^K\frac{\|\theta_{(k)}\|^4}{\|\theta_{(k)}\|^2+T_k\varepsilon^2}2^{-2 \bar b_k}\\
&\hspace{.3in}+(1+3\rho_\eps)\min_{\omega\in\Omega_{\text{mon}}}\sum_{k=1}^K\left((1-\bar\omega_k)^2\|\theta_{(k)}\|^2+\bar\omega_k^2T_k\eps^2\right)+T_1\eps^2\nonumber\\
&\leq (1+\nu_\eps)\bigg(\min_{b\in\Pi_{\text{mon}}(B')}\sum_{j=1}^N\frac{\theta_j^4}{\theta_j^2+\eps^2}2^{-2 b_j}\\
&\hspace{.3in}+\min_{\omega\in\Omega_{\text{mon}}}\sum_{j=1}^N\left((1-\omega_j)^2\theta_j^2+\omega_j^2\eps^2\right)\bigg)+T_1\eps^2.\nonumber
\end{align*}
Then by Lemma \ref{lem_equivalence}, 
\begin{equation*}
M\leq (1+\nu_\eps)V_\varepsilon(m,c,B')+T_1\eps^2.
\end{equation*}
which, plugged into \eqref{eqn_upperboundbyM}, gives us
\begin{equation*}\begin{aligned}
&\mathbb E\|\check\theta-\theta\|^2\leq (1+\nu_\eps)V_\varepsilon(m,c,B')+O(K\eps^2)\\
&\hspace{1in}+C_0\sqrt{K}\eps\sqrt{(1+\nu_\eps)V_\varepsilon(m,c,B')+O(K\eps^2)}.
\end{aligned}\end{equation*}
Recall that
\begin{equation*}
\nu_\eps=O\left(\frac{\log\log(1/\eps)}{\log(1/\eps)}\right),\quad K=O(\log^2(1/\eps)),
\end{equation*}
and that 
\begin{equation*}
\lim_{\eps\to0}\frac{B'}{B}=\lim_{\eps\to 0}\frac{1}{1+3\rho_\eps}\left(1-\frac{T_1b_{\max}}{B}\right)=1.
\end{equation*}
Thus,
\begin{equation*}
\lim_{\eps\to 0}\frac{V_\eps(m, c, B')}{V_\eps( m, c,B)}=1.
\end{equation*}
Also notice that no matter how $B$ grows as $\eps\to0$, $V_\eps(m,c,B)=O(\eps^{\frac{4m}{2m+1}})$. Therefore, 
\begin{align*}
&\hspace{-0.5in}\lim_{\eps\to0}\frac{\mathbb E\|\check\theta-\theta\|^2}{V_\eps(B,m,c)}\nonumber\\
&\leq \lim_{\eps\to0}\Bigg((1+\nu_\eps)\frac{V_\eps(B',m,c)}{V_\eps(B,m,c)}+\frac{O(K\eps^2)}{V(B,m,c)}\\
&\hspace{0.2in}+C_0\sqrt{(1+\nu_\eps)\frac{K\eps^2}{V_\eps(B,m,c)}\frac{V_\eps(B',m,c)}{V_\eps(B,m,c)}+\left(\frac{O(K\eps^2)}{V_\eps(B,m,c)}\right)^2}\Bigg)\nonumber\\
&=1
\end{align*}
which concludes the proof.
\begin{lemma}\label{lem_equivalence}
Let $V_1$ be the value of the optimization
\begin{equation*}
\tag{$A_1$}\label{eqn_A1}
\begin{aligned}
\max_{\theta}\min_b\ &\ \sum_{j=1}^N\left(\frac{\theta_j^4}{\theta_j^2+\eps^2}2^{-2b_j}+\frac{\theta_j^2\eps^2}{\theta_j^2+\eps^2}\right)\\
\text{such that}\ &\ \sum_{j=1}^Nb_j\leq B,\ b_j\geq0,\ \sum_{j=1}^J a_j^2\theta_j^2\leq \frac{c^2}{\pi^{2m}},
\end{aligned}
\end{equation*}
and let $V_2$ be the value of the optimization
\begin{equation*}
\tag{$A_2$}\label{eqn_A2}
\begin{aligned}
\max_{\theta}\min_{b,\omega}\ &\ \sum_{j=1}^N\left(\frac{\theta_j^4}{\theta_j^2+\eps^2}2^{-2b_j}+(1-\omega_j)^2\theta_j^2+\omega_j^2\eps^2\right)\\
\text{such that}\ &\ \sum_{j=1}^Nb_j\leq B,\ b_{j-1}\geq b_j,\ 0\leq b_j\leq b_{\max},\ \omega_{j-1}\geq\omega_j,\\
&\ \sum_{j=1}^J a_j^2\theta_j^2\leq \frac{c^2}{\pi^{2m}}.
\end{aligned}
\end{equation*}
Then $V_1=V_2$.
\end{lemma}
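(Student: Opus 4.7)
The plan is to prove $V_1 = V_2$ by establishing both inequalities separately. The easier direction $V_2 \ge V_1$ follows from a pointwise comparison: for any fixed $\theta$ and $b$, the unconstrained minimization $\min_{\omega} \{(1-\omega_j)^2\theta_j^2 + \omega_j^2\eps^2\}$ equals $\theta_j^2\eps^2/(\theta_j^2+\eps^2)$, attained at $\omega_j^* = \theta_j^2/(\theta_j^2+\eps^2)$, so that $\min_{\omega} f_2(\theta, b, \omega) = f_1(\theta, b)$. Adding the monotonicity constraint on $\omega$, together with the monotonicity and the bound $b_j \leq b_{\max}$ on $b$, can only increase the inner minimum in $(A_2)$; thus $V_2(\theta) \geq V_1(\theta)$ pointwise, and taking the maximum in $\theta$ yields $V_2 \geq V_1$.

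For the reverse inequality, I first argue that the maximizer of $(A_1)$ may be taken with $\theta_j^2$ non-increasing in $j$. The inner value $\min_b f_1(\theta,b)$ depends on $\theta$ only through the multiset of its squared coordinates, since one may permute $\theta$ and $b$ simultaneously; meanwhile the ellipsoid constraint $\sum_j a_j^2 \theta_j^2 \leq c^2/\pi^{2m}$ with $a_j^2 = j^{2m}$ increasing is minimized, among permutations of $\theta^2$, when the components are sorted in decreasing order, by the rearrangement inequality. For such a decreasing maximizer $\theta^*$, the reverse water-filling optimizer $b_j^* = \frac{1}{2}\log_{+}(\sigma_j^2/\eta)$ with $\sigma_j^2 = (\theta_j^*)^4/((\theta_j^*)^2+\eps^2)$ is decreasing in $j$ (since $x \mapsto x^2/(x+\eps^2)$ is increasing on $x \geq 0$), and $\omega_j^* = (\theta_j^*)^2/((\theta_j^*)^2+\eps^2)$ is likewise decreasing; moreover $b_j^* \leq b_{\max} = 2\log(1/\eps)$ in the relevant regimes, using $\sigma_j^2 \leq (\theta_j^*)^2 \leq c^2/\pi^{2m}$ together with a standard lower bound on the water-filling threshold $\eta$. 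Hence $(b^*, \omega^*)$ is feasible in $(A_2)$ at $\theta^*$, and $V_2(\theta^*) \leq f_2(\theta^*, b^*, \omega^*) = f_1(\theta^*, b^*) = V_1$.

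The main obstacle is to upgrade the pointwise bound $V_2(\theta^*) \leq V_1$ at the $(A_1)$-maximizer to the global bound $V_2 = \max_\theta V_2(\theta) \leq V_1$, since a priori the maximum in $(A_2)$ could be attained at a non-decreasing $\theta$. I will resolve this via a rearrangement-plus-augmentation construction: given any $\theta$, its decreasing rearrangement $\theta^R$ has strictly slack ellipsoid constraint whenever $\theta$ is not already monotone; redistributing this slack by adding mass to the leading coordinates yields a decreasing $\tilde\theta$, feasible in $(A_1)$, with $V_1(\tilde\theta) \geq V_1(\theta^R) = V_1(\theta)$. A quantitative comparison, exploiting the explicit reverse-water-filling form and the structure of the monotonicity constraints on $b$ and $\omega$, will show that the gain in $V_1$ from the augmentation at least matches the penalty that the monotonicity constraints impose on $V_2(\theta)$ for non-monotone $\theta$; hence $V_1 \geq \min_b f_1(\tilde\theta, b) \geq V_2(\theta)$, and taking the supremum in $\theta$ gives $V_1 \geq V_2$.
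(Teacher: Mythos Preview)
Your argument for $V_2\ge V_1$ is fine and matches the paper. The trouble is entirely in the reverse inequality.

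You correctly identify that showing $V_2(\theta^*)\le V_1$ at the $(A_1)$-maximizer $\theta^*$ does not by itself give $V_2\le V_1$: the $(A_2)$-maximizer could a priori be a different, non-monotone $\theta$. Your proposed fix --- a ``rearrangement-plus-augmentation'' construction with a ``quantitative comparison'' showing the augmentation gain dominates the monotonicity penalty --- is not an argument, it is a hope. Nothing in your sketch explains why the slack created by rearranging a non-monotone $\theta$ should be large enough to offset the amount by which the monotonicity constraints in $(A_2)$ inflate $V_2(\theta)$ over $V_1(\theta)$. As stated this step is a genuine gap.

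The paper avoids this difficulty by working from the other side: it shows directly that the $(A_2)$-maximizer $\theta^*$ can be taken non-increasing. The mechanism is clean. Suppose $\theta^*_j<\theta^*_{j+1}$ at some index. With $s_i=(\theta^*_i)^4/((\theta^*_i)^2+\eps^2)$ one has $s_j<s_{j+1}$; then for the inner minimizers $b^*,\omega^*$ (which satisfy $b^*_j\ge b^*_{j+1}$ and $\omega^*_j\ge\omega^*_{j+1}$ by feasibility), a direct convexity/rearrangement computation shows that replacing $(b^*_j,b^*_{j+1})$ by their average, and likewise $(\omega^*_j,\omega^*_{j+1})$ by their average, weakly decreases the objective, strictly unless $b^*_j=b^*_{j+1}$ and $\omega^*_j=\omega^*_{j+1}$. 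Hence at any such inversion the inner optimizers are constant across $j,j+1$, and one may swap $\theta^*_j$ and $\theta^*_{j+1}$ without changing the objective while only loosening the ellipsoid constraint. This yields a non-increasing $(A_2)$-maximizer. For that $\theta^*$, the unconstrained $(A_1)$ inner minimizer (reverse water-filling for $b$, $\omega_j=\theta^{*2}_j/(\theta^{*2}_j+\eps^2)$) is automatically monotone and satisfies $b_1\le b_{\max}$, so it is feasible in $(A_2)$; therefore $V_2=V_2(\theta^*)=V_1(\theta^*)\le V_1$.

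In short: do not try to push the $(A_1)$-maximizer up to dominate every $V_2(\theta)$; instead prove the $(A_2)$-maximizer itself is monotone, after which the inner problems coincide.
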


\subsection{Proofs of Lemmas}\label{sec_lemmaproof}

\begin{proof}[Proof of Lemma \ref{lem_ratedistortion}] Let $\zeta(t)$ be a positive function of $t$ to be specified later.  Let
\begin{equation*}
p_0=\mathbb P\left(\left\|\sqrt{1-2^{-2q}}Z_1-y\right\|\leq 2^{-q}\sqrt{1+\zeta(t)}\right).
\end{equation*}
By Lemma \ref{lem:sphere}, when $\zeta(t)\leq 2(1-2^{-2q})$, $p_0$ can be lower bounded by 
\begin{equation*}
p_0\geq\frac{\Gamma(\frac{t}{2}+1)}{\sqrt{\pi} t\Gamma(\frac{t+1}{2})}\left(2^{-q}\sqrt{1+\zeta(t)/2}\right)^{t-1}.
\end{equation*}
We obtain that
\begin{align*}
&\hspace{-.3in}\mathbb E\left\|\sqrt{1-2^{-2q}}Z^*-y\right\|^2\nonumber\\
&\leq 2^{-2q}(1+\zeta(t))+2\mathbb P\left(\left\|\sqrt{1-2^{-2q}}Z^*-y\right\|> 2^{-q}\sqrt{1+\zeta(t)}\right)\\
&= 2^{-2q}(1+\zeta(t))+2(1-{p_0})^n.
\end{align*}
To upper bound $(1-{p_0})^n$, we consider
\begin{align*}
\log\left((1-{p_0})^n\right)&=n\log(1-p_0)\leq -np_0\\
&\leq -2^{qt}\frac{\Gamma(\frac{t}{2}+1)}{\sqrt{\pi} t\Gamma(\frac{t+1}{2})}\left(2^{-q}\sqrt{1+\zeta(t)/2}\right)^{t-1}\\
&\leq -2^q\frac{\Gamma(\frac{t}{2}+1)}{\sqrt{\pi} t\Gamma(\frac{t+1}{2})}(1+\zeta(t)/2)^{(2/\zeta(t)+1)\frac{t-1}{2(2/\zeta(t)+1)}}\\
&\leq -\frac{\sqrt{2\pi}(\frac{t}{2})^{\frac{t}{2}+\frac{1}{2}}e^{-\frac{t}{2}}}{\sqrt{\pi} te(\frac{t}{2}-\frac{1}{2})^{\frac{t}{2}}e^{-(\frac{t}{2}-\frac{1}{2})}}e^{\frac{t-1}{2(2/\zeta(t)+1)}}\\
&=-e^{-\frac{3}{2}}t^{-\frac{1}{2}}\left(\frac{t}{t-1}\right)^{\frac{t}{2}}e^{\frac{t-1}{2(2/\zeta(t)+1)}}\\
&\leq -e^{-1}t^{-\frac{1}{2}}e^{\frac{t-1}{2(2/\zeta(t)+1)}}
\end{align*}
where we have used Stirling's approximation in the form
\begin{equation*}
\sqrt{2\pi}z^{z+1/2}e^{-z}\leq \Gamma(z+1)\leq ez^{z+1/2}e^{-z}.
\end{equation*}
In order for $(1-p_0)^n\leq e^{-2t}$ to hold, we need
\begin{equation*}
-2t = -e^{-1}t^{-\frac{1}{2}}e^{\frac{t-1}{2(2/\zeta(t)+1)}},
\end{equation*}
which leads to the choice of $\zeta(t)$
\begin{equation*}
\zeta(t)= \frac{2}{\frac{t-1}{2\log(2et^{\frac{3}{2}})}-1}=\frac{6\log t+4\log(2e)}{t-3\log t-2\log(2e)-1}.
\end{equation*}
Thus, we have shown that when $q$ is not too close to 0, satisfying $1-2^{-2q} \geq \zeta(t)/2$,
we have
\[
\mathbb E\left\|\sqrt{1-2^{-2q}}Z^*-y\right\|^2 \leq 2^{-2q}\left(1+\zeta(t)\right) + e^{-2t}.
\]
When $1-2^{-2q} < \zeta(t)/2$, we observe that
\begin{align*}
\mathbb E\left\|\sqrt{1-2^{-2q}}Z^*-y\right\|^2 &= 1-2^{-2q} + 1 -2\sqrt{1-2^{-2q}}\,\mathbb E\langle Z^*,y\rangle \\
&\leq 2-2^{-2q} = 2^{-2q}\left(1+2\left(2^{2q}-1\right)\right)
\end{align*}
and that
\[
2(2^{2q}-1) < \frac{2}{1-\zeta(t)/2}-2= \frac{2\zeta(t)}{2-\zeta(t)} = \frac{6\log t+4\log(2e)}{t-6\log t-4\log(2e)-1}.
\]
Now take $\nu(t) = \frac{6\log t+7}{t-6\log t-7}$. Notice that $\nu(t)>\frac{6\log t+4\log(2e)}{t-6\log t-4\log(2e)-1}\geq \zeta(t)$, we have for any $q\geq 0$
\[
\mathbb E\left\|\sqrt{1-2^{-2q}}Z^*-y\right\|^2 \leq 2^{-2q}\left(1+\nu(t)\right) + e^{-2t}.
\]
\end{proof}

\begin{lemma}\label{lem:sphere}
Suppose $Z$ is a $t$-dimensional random vector uniformly distributed on the unit sphere $\mathbb S^{t-1}$. 
Let $y$ be a fixed vector on the unit sphere. For $\delta<1$ and $\zeta>0$ satisfying $\zeta\leq 2(1-\delta^2)$, define
\[
p_0 = \mathbb P\left(\|Z-y\|\leq \delta\sqrt{1+\zeta}\right).
\]
We have
\[
p_0 \geq \frac{\Gamma\left(\frac{t}{2}+1\right)}{\sqrt{\pi} t\Gamma\left(\frac{t+1}{2}\right)}\left(\delta\sqrt{1+\zeta/2}\right)^{t-1}
\]
\end{lemma}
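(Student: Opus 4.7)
The plan is to use rotational symmetry to reduce $p_0$ to a one-dimensional integral over the density of the first coordinate of $Z$, then carry out an explicit change of variables and a monotonicity bound to match the form on the right-hand side. By rotational invariance of the uniform distribution on $\mathbb{S}^{t-1}$, I may assume without loss of generality that $y$ is the first standard basis vector, so that $\|Z-y\|^2 = 2 - 2Z_1$ and the event $\{\|Z-y\| \leq \delta\sqrt{1+\zeta}\}$ is exactly $\{Z_1 \geq 1 - \delta^2(1+\zeta)/2\}$. The marginal density of $Z_1$ is
\[
f(z) = \frac{\Gamma(t/2)}{\sqrt{\pi}\,\Gamma((t-1)/2)}(1-z^2)^{(t-3)/2}, \quad z \in [-1, 1],
\]
obtained from writing $Z = (z, \sqrt{1-z^2}\,\omega)$ with $\omega \in \mathbb{S}^{t-2}$ and computing the induced surface area element; thus $p_0$ is the integral of $f$ over $[1-\delta^2(1+\zeta)/2, 1]$.

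Next I would apply the substitution $z = 1 - w^2/2$, under which $1 - z^2 = w^2(1 - w^2/4)$ and $dz = -w\,dw$; the limits transform to $w \in [0, r]$ with $r = \delta\sqrt{1+\zeta}$. This converts $p_0$ into
\[
p_0 = \frac{\Gamma(t/2)}{\sqrt{\pi}\,\Gamma((t-1)/2)}\int_0^r w^{t-2}(1-w^2/4)^{(t-3)/2}\,dw.
\]
For $t \geq 3$ the factor $(1-w^2/4)^{(t-3)/2}$ is non-increasing on $[0, r]$, so I pull it out at its endpoint value $w = r$, leaving $\int_0^r w^{t-2}\,dw = r^{t-1}/(t-1)$; for $t < 3$ the same exponent is negative and one instead uses $(1-w^2/4)^{(t-3)/2} \geq 1$ to obtain an analogous (in fact stronger) bound. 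The identity $\Gamma(z+1) = z\Gamma(z)$ rewrites $\Gamma(t/2)/((t-1)\Gamma((t-1)/2))$ as $\Gamma(t/2+1)/(t\Gamma((t+1)/2))$, producing
\[
p_0 \geq \frac{\Gamma(t/2+1)}{\sqrt{\pi}\,t\,\Gamma((t+1)/2)}\,r^{t-1}(1-r^2/4)^{(t-3)/2}.
\]

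The hard part will be the final algebraic step: to match the bound in the statement, one needs $r^{t-1}(1-r^2/4)^{(t-3)/2} \geq (\delta\sqrt{1+\zeta/2})^{t-1}$, which after substituting $r = \delta\sqrt{1+\zeta}$ reduces to
\[
(1+\zeta)^{(t-1)/2}\bigl(1-\delta^2(1+\zeta)/4\bigr)^{(t-3)/2} \geq (1+\zeta/2)^{(t-1)/2}.
\]
The hypothesis $\zeta \leq 2(1-\delta^2)$, equivalently $\delta^2 \leq 1 - \zeta/2$, is precisely what keeps $\delta^2(1+\zeta)/4$ small enough that the loss from the $(1-r^2/4)^{(t-3)/2}$ factor is offset by the excess of $1+\zeta$ over $1+\zeta/2$. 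I would prove this inequality by taking logarithms, applying $\log(1-x) \geq -x/(1-x)$ to bound the left side from below, and reducing to a polynomial inequality in $\delta^2$ and $\zeta$ that can be verified directly under the stated constraint.
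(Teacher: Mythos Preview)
Your reduction to the marginal of $Z_1$, the change of variables $z=1-w^2/2$, and the Gamma-function bookkeeping are all correct. The gap is the last step: the inequality
\[
(1+\zeta)^{(t-1)/2}\bigl(1-\delta^2(1+\zeta)/4\bigr)^{(t-3)/2}\ \ge\ (1+\zeta/2)^{(t-1)/2}
\]
is false under the stated hypothesis. Take $t=5$, $\delta^2=0.9$, $\zeta=0.2$ (so $\zeta=2(1-\delta^2)$): the left side is $(1.2)^2(1-0.27)\approx 1.051$ while the right side is $(1.1)^2=1.21$. For large $t$ the inequality would force $(1+\zeta)\bigl(1-\delta^2(1+\zeta)/4\bigr)\ge 1+\zeta/2$, i.e.\ $2\zeta\ge\delta^2(1+\zeta)^2$, which fails whenever $\delta$ is near $1$ and $\zeta$ is small; no logarithmic bound can rescue a false polynomial inequality.

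In fact the lemma \emph{as stated} cannot hold: with the same parameters and $t=5$ one computes $p_0\approx 0.179$ while the claimed lower bound is $\approx 0.184$. The paper's own proof does not work on the unit sphere; it places the random point on the sphere of radius $\sqrt{1-\delta^2}$ (this is visible in the figure and in the law-of-cosines computation of $\cos\theta_0$), which is exactly how the result is applied in the preceding lemma with $\delta=2^{-q}$. With that scaling the threshold on the first coordinate becomes $c'=(1-\delta^2(1+\zeta/2))/\sqrt{1-\delta^2}$, and then
\[
1-c'^2=\delta^2(1+\zeta)-\frac{\delta^4\zeta^2}{4(1-\delta^2)}\ \ge\ \delta^2(1+\zeta/2)
\]
follows in one line from $\zeta\le 2(1-\delta^2)$. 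Your analytic route---bounding the cap integral below by the base-disk volume $(1-c'^2)^{(t-1)/2}/(t-1)$---then goes through verbatim for this corrected version and reproduces the paper's geometric cap-versus-disk argument.
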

\begin{proof}
The proof is based on an idea from \cite{sakrison1968geometric}.
\begin{figure}
\begin{center}
\includegraphics{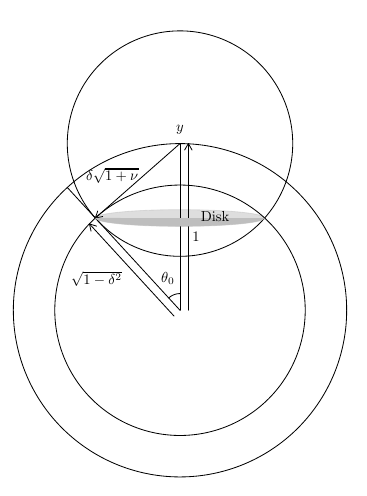} \\
\caption{Illustration of the geometry for calculating $p_0$}\label{fig:geometry}
\end{center}
\end{figure}
Denote by $V_t$ and $A_t$ the volume and the surface area of a $t$-dimensional unit sphere, respectively.
We have
\[
V_t = \int_0^1 A_tr^{t-1}dr = \frac{1}{t}A_t.
\]
From the geometry of the situation as illustrated in Figure \ref{fig:geometry}, $p_0$ is equal to 
the ratio of two areas $S_1$ and $S_2$.
The first area $S_1$ is the portion of the surface area of the sphere of radius $\sqrt{1-\delta^2}$ and center $O$
contained within the sphere of radius $\delta\sqrt{1+\zeta}$ and center $y$.
It is the surface area of a $(t-1)$-dimensional polar cap of radius $\sqrt{1-\delta^2}$ and polar angle $\theta_0$,
and can be lower bounded by the area of a $(t-1)$-dimensional disk of radius $\sqrt{1-\delta^2}\sin\theta_0$, that is,
\[
S_1\geq V_{t-1}\left(\sqrt{1-\delta^2}\sin\theta_0\right)^{t-1} = \frac{1}{t-1}A_{t-1}\left(\sqrt{1-\delta^2}\sin\theta_0\right)^{t-1}
\]
The second area $S_2$ is simply the surface area of a $(t-1)$-dimensional sphere of radius $\sqrt{1-\delta^2}$
\[
S_2 = A_t \left(\sqrt{1-\delta^2}\right)^{t-1}.
\]
Therefore, we obtain
\begin{align*}
p_0 &= \frac{S_1}{S_2} \geq \frac{\frac{1}{t-1}A_{t-1}\left(\sqrt{1-\delta^2}\sin\theta_0\right)^{t-1}}{A_t \left(\sqrt{1-\delta^2}\right)^{t-1}}=\frac{A_{t-1}}{(t-1)A_t}\left(\sin\theta_0\right)^{t-1} = \frac{\Gamma\left(\frac{t+1}{2}+\frac{1}{2}\right)}{\sqrt{\pi} t\Gamma\left(\frac{t+1}{2}\right)}\left(\sin\theta_0\right)^{t-1},
\end{align*}
where we have used the well-known relationship between $A_{t-1}$ and $A_t$
\[
\frac{A_{t-1}}{A_t} = \frac{1}{\sqrt{\pi}}\frac{(t-1)\Gamma\left(\frac{t}{2}+1\right)}{t\Gamma\left(\frac{t-1}{2}+1\right)}.
\]
Now we need to calculate $\sin\theta_0$. By the law of cosines, we have
\[
\cos\theta_0 = \frac{1+1-\delta^2-\delta^2(1+\zeta)}{2\sqrt{1-\delta^2}} = \frac{1-\delta^2(1+\zeta/2)}{\sqrt{1-\delta}^2}
\]
and it follows that
\[
\sin^2\theta_0 = 1-\cos^2\theta_0 = 1 - \frac{1+\delta^4(1+\zeta/2)^2-2\delta^2(1+\zeta/2)}{1-\delta^2}
= \delta^2(1+\zeta) - \frac{\delta^4\zeta^2}{4(1-\delta^2)}.
\]
Now since $\zeta\leq 2(1-\delta^2)$, we get
\[
\sin\theta_0\geq \delta\sqrt{1+\zeta/2},
\]
which completes the proof. 
\end{proof}

\begin{proof}[Proof of Lemma \ref{lem_oracle1}]
We first claim that
\begin{equation*}
\mathbb E\left(\frac{S^2-n\sigma^2}{S}-\frac{\langle\theta,X\rangle}{\|X\|}\right)^2\leq\mathbb E\left(\frac{\|X\|^2-n\sigma^2}{\|X\|}-\frac{\langle\theta,X\rangle}{\|X\|}\right)^2.\label{eqn_oracle1claim}
\end{equation*}
In fact, writing $\mathbb E_r(\cdot)$ for the conditional expectation $\mathbb E(\cdot\given \|X\|=r)$, it suffices to show that for $r<\sqrt{n\sigma^2}$ and $r>\sqrt{n\sigma^2}+c$  
\begin{equation*}
\mathbb E_r\left(\frac{S^2-n\sigma^2}{S}-\frac{\langle\theta,X\rangle}{\|X\|}\right)^2\leq\mathbb E_r\left(\frac{\|X\|^2-n\sigma^2}{\|X\|}-\frac{\langle\theta,X\rangle}{\|X\|}\right)^2.
\end{equation*}
When $r<\sqrt{n\sigma^2}$, it is equivalent to
\begin{align*}
\mathbb E_r\left(\frac{\langle\theta,X\rangle}{\|X\|}\right)^2\leq\mathbb E_r\left(\frac{\langle\theta,X\rangle}{\|X\|}-\frac{\|X\|^2-n\sigma^2}{\|X\|}\right)^2
\end{align*}
It is then sufficient to show that $\mathbb E_r\langle \theta,X\rangle\geq 0$. This can be obtained by following a similar argument as in Lemma A.6 in \cite{tsybakov2008introduction}. When $r>\sqrt{n\sigma^2}+c$, we need to show that
\begin{equation*}
\mathbb E_r\left(\frac{(\sqrt{n\sigma^2}+c)^2-n\sigma^2}{\sqrt{n\sigma^2}+c}-\frac{\langle\theta,X\rangle}{\|X\|}\right)^2\leq\mathbb E_r\left(\frac{\|X\|^2-n\sigma^2}{\|X\|}-\frac{\langle\theta,X\rangle}{\|X\|}\right)^2,
\end{equation*}
which, after some algebra, boils down to
\begin{equation*}
\frac{(\sqrt{n\sigma^2}+c)^2-n\sigma^2}{\sqrt{n\sigma^2}+c}+\frac{r^2-n\sigma^2}{r}\geq \frac{2}{r}\mathbb E_r\langle \theta, X\rangle.
\end{equation*}
This holds because
\begin{align*}
&r\left(\frac{(\sqrt{n\sigma^2}+c)^2-n\sigma^2}{\sqrt{n\sigma^2}+c}+\frac{r^2-n\sigma^2}{r}- \frac{2}{r}\mathbb E_r\langle \theta, X\rangle\right)\\
&\geq \|\theta\|^2+r^2-n\sigma^2-2\mathbb E_r\langle\theta,X\rangle\\
&\geq \mathbb E_r\|X-\theta\|^2-n\sigma^2\\
&\geq 0
\end{align*}
where we have used the assumption that $r>\sqrt{n\sigma^2}+c$, $\|\theta\|\leq c$ and that
\begin{equation*}
\mathbb E_r\|X-\theta\|\geq\mathbb E_r\|X\|-\|\theta\|\geq\sqrt{n\sigma^2}.
\end{equation*}

Now that we have shown \eqref{eqn_oracle1claim} and noting that
\begin{equation*}
\mathbb E\left(\frac{\|X\|^2-n\sigma^2}{\|X\|}-\frac{\langle\theta,X\rangle}{\|X\|}\right)^2=\sigma^2\mathbb E\left(\frac{\|X/\sigma\|^2-n}{\|X/\sigma\|}-\frac{\langle\theta/\sigma,X/\sigma\rangle}{\|X/\sigma\|}\right)^2,
\end{equation*}
we can assume that $X\sim N(\theta,I_n)$ and equivalently show that there exists a universal constant $C_0$ such that
\begin{equation*}
\mathbb E\left(\frac{\|X\|^2-n}{\|X\|}-\frac{\langle\theta,X\rangle}{\|X\|}\right)^2\leq C_0
\end{equation*}
holds for any $n$ and $\theta$. Letting $Z=X-\theta$ and writing $\|\theta\|^2=\xi$, we have 
\begin{align*}
&\mathbb E\left(\frac{\|X\|^2-n}{\|X\|}-\frac{\langle\theta,X\rangle}{\|X\|}\right)^2\\
&=\mathbb E\left(\frac{\|Z+\theta\|^2-n-\xi}{\|Z+\theta\|}-\frac{\langle\theta,Z\rangle}{\|Z+\theta\|}\right)^2\\
&\leq 2\mathbb E\left(\frac{\|Z+\theta\|^2-n-\xi}{\|Z+\theta\|}\right)^2+2\mathbb E\left(\frac{\langle\theta,Z\rangle}{\|Z+\theta\|}\right)^2\\
&\leq 2\mathbb E\|Z+\theta\|^2-4(n+\xi)+2\mathbb E\frac{(n+\xi)^2}{\|Z+\theta\|^2}+2\mathbb E\left(\frac{\langle\theta,Z\rangle}{\|Z+\theta\|}\right)^2\\
&\leq2(n+\xi)-4(n+\xi)+2\frac{(n+\xi)^2}{n+\xi-4}+2\mathbb E\left(\frac{\langle\theta,Z\rangle}{\|Z+\theta\|}\right)^2\\
&=\frac{8(n+\xi)}{n+\xi-4}+2\mathbb E\left(\frac{\langle\theta,Z\rangle}{\|Z+\theta\|}\right)^2.
\end{align*}
where the last inequality is due to Lemma \ref{lem_inversechisq}.
To bound the last term, we apply the Cauchy-Schwarz inequality and get
\begin{align*}
\mathbb E\left(\frac{\langle\theta,Z\rangle}{\|Z+\theta\|}\right)^2&\leq \sqrt{\mathbb E\frac{1}{\|Z+\theta\|^4}\mathbb E\langle\theta,Z\rangle^4}\\
&\leq \sqrt{\frac{3(n-4)\xi^2}{(n-6)(n+\xi-4)(n+\xi-6)}}
\end{align*}
where the last inequality is again due to Lemma \ref{lem_inversechisq}. Thus we just need to take $C_0$ to be
\begin{equation*}
\sup_{n\geq7,\xi\geq 0}\frac{8(n+\xi)}{n+\xi-4}+2\sqrt{\frac{3(n-4)\xi^2}{(n-6)(n+\xi-4)(n+\xi-6)}},
\end{equation*}
which is apparently a finite quantity. 
\end{proof}

\begin{proof}[Proof of Lemma \ref{lem_oracle2}]
Since the function $(x^2-n\sigma^2)^2/x^2$ is decreasing on $(0,\sqrt{n\sigma^2})$ and increasing on $(\sqrt{n\sigma^2},\infty)$, we have 
\begin{equation*}
\frac{(S^2-n\sigma^2)^2}{S^2}\leq \frac{(\|X\|^2-n\sigma^2)^2}{\|X\|^2},
\end{equation*}
and it follows that if $n>4$
\begin{align}
\mathbb E\frac{(S^2-n\sigma^2)^2}{S^2}&\leq\mathbb E \frac{(\|X\|^2-n\sigma^2)^2}{\|X\|^2}\\
&=\mathbb E\|X\|^2-2n\sigma^2+n^2\sigma^4\mathbb E\left(\frac{1}{\|X\|^2}\right)\\
&\leq\|\theta\|^2-n\sigma^2+\frac{n^2\sigma^4}{\|\theta\|^2+n\sigma^2-4\sigma^2}\label{inversechisq}\\
&\leq \frac{\|\theta\|^4}{\|\theta\|^2+n\sigma^2}+\frac{4n}{n-4}\sigma^2\label{oracle1}
\end{align}
where \eqref{inversechisq} is due to Lemma \ref{lem_inversechisq}, and \eqref{oracle1} is obtained by
\begin{align*}
&\|\theta\|^2-n\sigma^2+\frac{n^2\sigma^4}{\|\theta\|^2+n\sigma^2-4\sigma^2}-\frac{\|\theta\|^4}{\|\theta\|^2+n\sigma^2}\\
&=\frac{\|\theta\|^4+4\sigma^2(n\sigma^2-\|\theta\|^2)}{\|\theta\|^2+n\sigma^2-4\sigma^2}-\frac{\|\theta\|^4}{\|\theta\|^2+n\sigma^2}\\
&=\frac{4n^2\sigma^6}{(\|\theta\|^2+n\sigma^2-4\sigma^2)(\|\theta\|^2+n\sigma^2)}\\
&\leq \frac{4n}{n-4}\sigma^2.
\end{align*}
\end{proof}

\begin{proof}[Proof of Lemma \ref{lem_oracle3}]
First, the second inequality 
\begin{equation*}
\mathbb E\|\hat\theta_+-\theta\|^2\leq \frac{n\sigma^2\|\theta\|^2}{\|\theta\|^2+n\sigma^2}+4\sigma^2
\end{equation*}
is given by Lemma 3.10 from \cite{tsybakov2008introduction}. We thus focus on the first inequality. For convenience we write
\begin{equation*}
g_+(x)=\left(\frac{\|x\|^2-n\sigma^2}{\|x\|^2}\right)_+,\quad g_\dag(x)=\frac{s(x)^2-n\sigma^2}{s(x)\|x\|}
\end{equation*}
with
\begin{equation*}
s(x)=\begin{cases}
\sqrt{n\sigma^2}& \text{if }\|x\|<\sqrt{n\sigma^2}\\
\sqrt{n\sigma^2}+c & \text{if }\|x\|>\sqrt{n\sigma^2}+c\\
\|x\|& \text{otherwise}
\end{cases}.
\end{equation*}
Notice that $g_+(x)=g_\dag(x)$ when $\|x\|\leq \sqrt{n\sigma^2}+c$ and $g_+(x)>g_\dag(x)$ when $\|x\|> \sqrt{n\sigma^2}+c$. Since $g_\dag$ and $g_+$ both only depend on $\|x\|$, we sometimes will also write $g_\dag(\|x\|)$ for $g_\dag(x)$ and $g_+(\|x\|)$ for $g_+(x)$. Setting $\mathbb E_r(\cdot)$ to denote the conditional expectation $\mathbb E(\cdot\given \|X\|=r)$ for brevity, it suffices to show that for $r\geq \sqrt{n\sigma^2}+c$
\begin{align*}
&&\mathbb E_r\left(\|g_\dag(X) X-\theta\|^2\right)&\leq\mathbb E_r\left(\|g_+(X) X-\theta\|^2\right)\\
&\Longleftrightarrow& g_\dag(r)^2r^2-2g_\dag(r)\mathbb E_r\langle X,\theta\rangle&\leq g_+(r)^2r^2-2g_+(r)\mathbb E_r\langle X,\theta\rangle\\
&\Longleftrightarrow& (g_\dag(r)^2-g_+(r)^2)r^2&\geq 2(g_\dag(r)-g_+(r))\mathbb E_r\langle X,\theta\rangle\\
&\Longleftrightarrow& (g_\dag(r)+g_+(r))r^2&\geq2\mathbb E_r\langle X,\theta\rangle.\numberthis\label{gplusg}
\end{align*}
On the other hand, we have
\begin{align*}
(g_\dag(r)+g_+(r))r^2&\geq \left(\frac{\|\theta\|^2}{r^2}+\frac{r^2-n\sigma^2}{r^2}\right)r^2\\
&=\|\theta\|^2+r^2-n\sigma^2\\
&=\|\theta\|^2+r^2-2\mathbb E_r\langle X,\theta\rangle-n\sigma^2+2\mathbb E_r\langle X,\theta\rangle\\
&=\mathbb E_r\|X-\theta\|^2-n\sigma^2+2\mathbb E_r\langle X,\theta\rangle\\
&\geq 2\mathbb E_r\langle X,\theta\rangle
\end{align*}
where the last inequality is because
\begin{equation*}
\|X-\theta\|^2\geq(\|X\|-\|\theta\|)^2\geq n\sigma^2.
\end{equation*}
Thus, \eqref{gplusg} holds and hence $\mathbb E\|\hat\theta_\dag-\theta\|^2\leq\mathbb E\|\hat\theta_+-\theta\|^2$.
\end{proof}

\begin{proof}[Proof of Lemma \ref{lem_equivalence}] It is easy to see that $V_1\leq V_2$, because for any $\theta$ the inside minimum is smaller for (\ref{eqn_A1}) than for (\ref{eqn_A2}). Next, we will show $V_1\geq V_2$.

Suppose that $\theta^*$ achieves the value $V_2$, with corresponding $b^*$ and $\omega^*$. We claim that $\theta^*$ is non-increasing. In fact, if $\theta^*$ is not non-increasing, then there must exist an index $j$ such that $\theta_j^*<\theta_{j+1}^*$ and for simplicity let's assume that $\theta^*_1<\theta^*_2$. We are going to show that this leads to $b^*_1=b^*_2$ and $\omega^*_1=\omega_2^*$. Write 
\begin{equation*}
s_1=\frac{\theta_1^{*4}}{\theta_1^{*2}+\eps^2},\quad s_2=\frac{\theta_2^{*4}}{\theta_2^{*2}+\eps^2}.
\end{equation*}
We have $s_1<s_2$. Let $\bar b^* = \frac{b_1^*+b_2^*}{2}$ and observe that $b_1^*\geq \bar b^*\geq b_2^*$. Notice that
\begin{align*}
&\left(s_12^{-2b_1^*}+s_22^{-2b_2^*}\right)-\left(s_12^{-2\bar b^*}+s_22^{-2\bar b^*}\right)\\
&=s_1\left(2^{-2b_1^*}-2^{-2\bar b^*}\right)+s_2\left(2^{-2b_2^*}-2^{-2\bar b^*}\right)\\
&\geq s_2\left(2^{-2b_1^*}-2^{-2\bar b^*}\right)+s_2\left(2^{-2b_2^*}-2^{-2\bar b^*}\right)\\
&\geq s_2\left(2^{-2b_1^*}+2^{-2b_2^*}-2\cdot 2^{-2\bar b^*}\right)\\
&\geq 0,
\end{align*}
where equality holds if and only if $b^*_1=b_2^*$, since $s_2>s_1\geq 0$. Hence, $b^*_1$ and $b_2^*$ have to be equal, or otherwise it would contradict with the assumption that $b^*$ achieves the inside minimum of (\ref{eqn_A2}). Now turn to $\omega^*$. Write $\bar \omega^*=\frac{\omega^*_1+\omega^*_2}{2}$ and note that $\omega_1^*\geq\bar\omega^*\geq\omega_2^*$. Consider
\begin{align*}
&\left((1-\omega^*_1)^2\theta_1^{*2}+\omega_1^{*2}\eps^2\right)+\left((1-\omega^*_2)^2\theta_2^{*2}+\omega_2^{*2}\eps^2\right)-\left((1-\bar\omega^*)^2(\theta_1^{*2}+\theta_2^{*2})+2\bar\omega^{*2}\eps^2\right)\\
&=\left((1-\omega_1^*)^2-(1-\bar\omega^*)^2\right)\theta_1^{*2}+\left((1-\omega_2^*)^2-(1-\bar\omega^*)^2\right)\theta_2^{*2}+\left(\omega_1^{*2}+\omega_2^{*2}-2\bar\omega^{*2}\right)\eps^2\\
&\geq \left((1-\omega_1^*)^2-(1-\bar\omega^*)^2\right)\theta_2^{*2}+\left((1-\omega_2^*)^2-(1-\bar\omega^*)^2\right)\theta_2^{*2}+\left(\omega_1^{*2}+\omega_2^{*2}-2\bar\omega^{*2}\right)\eps^2\\
&=\left((1-\omega_1^*)^2+(1-\omega_2^*)^2-2(1-\bar\omega^*)^2\right)\theta_2^{*2}+\left(\omega_1^{*2}+\omega_2^{*2}-2\bar\omega^{*2}\right)\eps^2\\
&\geq 0,
\end{align*}
where the equality holds if and only if $\omega_1^*=\omega_2^*$. Therefore, $\omega_1^*$ and $\omega_2^*$ must be equal. Now, with $b_1^*=b_2^*$ and $\omega_1^*=\omega_2^*$, we can switch $\theta_1^*$ and $\theta_2^*$ without increasing the objective function and violating the constraints. Thus, our claim that $\theta^*$ is non-increasing is justified.

Now that we have shown that the solution triplet $(\theta^*,b^*,\omega^*)$ to (\ref{eqn_A2}) satisfy that $\theta^*$ is non-increasing, 
in order to prove $V_1\geq V_2$, it suffices to show that if we take $\theta=\theta^*$ in (\ref{eqn_A1}), the minimizer $b^\star$ is non-increasing and $b^\star_1\leq b_{\max}$. In fact, if so, we will have $b^\star=b^*$ as well as $\omega^*=\frac{\theta^{*2}_j}{\theta_j^{*2}+\eps^2}$ and then
\begin{equation*}
V_1\geq\min_{b:\sum_{j=1}^Nb_j\leq B}\sum_{j=1}^N\left(\frac{\theta_j^{*4}}{\theta_j^{*2}+\eps^2}2^{-2b_j}+\frac{\theta_j^{*2}\eps^2}{\theta_j^{*2}+\eps^2}\right)\geq V_2.
\end{equation*}
Let's take $\theta=\theta^*$ in (\ref{eqn_A1}). The optimal $b^\star$ is non-increasing because the solution is given by the ``reverse water-filling'' scheme and $\theta^*$ is non-increasing. 
Next, we will show that $b^\star_1\leq b_{\max}$. If $b_1^\star>b_{\max}$, then we would have for $j=1,\dots,N$
\begin{equation*}
\frac{\theta_j^{*4}}{\theta_j^{*2}+\eps^2}2^{-2b^\star_j}\leq \frac{\theta_1^{*4}}{\theta_1^{*2}+\eps^2}2^{-2b^\star_1}\leq \theta_1^{*2}2^{-2b_{\max}}\leq c^22^{-4\log(1/\eps)}=c^2\eps^4,
\end{equation*}
where the first inequality follows from the ``reverse water-filling'' solution, 
and therefore
\begin{equation*}
\sum_{j=1}^N\frac{\theta_j^{*4}}{\theta_j^{*2}+\eps^2}2^{-2b^\star_j}\leq Nc^2\eps^4=o(\eps^{\frac{4m}{2m+1}}),
\end{equation*}
which would not give the optimal solution. 
Hence, $b_1^\star\leq b_{\max}$, and this completes the proof. 
\end{proof}

\begin{lemma}\label{lem_inversechisq}
Suppose that $W_{n,\xi}$ follows a non-central chi-square distribution with $n$ degrees of freedom and non-centrality parameter $\xi$. We have for $n\geq 5$
\begin{equation*}
\mathbb E\left(W_{n,\xi}^{-1}\right)\leq\frac{1}{n+\xi-4},\label{eqn_inversechisq1}
\end{equation*}
and for $n\geq 7$
\begin{equation*}
\mathbb E\left(W_{n,\xi}^{-2}\right)\leq\frac{n-4}{(n-6)(n+\xi-4)(n+\xi-6)}.\label{eqn_inversechisq2}
\end{equation*}
\end{lemma}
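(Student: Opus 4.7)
The plan is to express both inverse moments as elementary integrals on $[0,1]$ via a Poisson mixture, and then use the inequality $(1-s)^\alpha\le e^{-\alpha s}$ to reduce each to a Laplace-type integral that admits the claimed closed-form bound.

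First, I would use the representation $W_{n,\xi}\stackrel{d}{=}W_{n+2K}$ with $K\sim\text{Poisson}(\xi/2)$, together with the central chi-square formulas $\E(W_m^{-1})=1/(m-2)$ and $\E(W_m^{-2})=1/((m-2)(m-4))$, to obtain
\begin{align*}
\E(W_{n,\xi}^{-1}) & = \E_K\!\left[\tfrac{1}{n+2K-2}\right],\\
\E(W_{n,\xi}^{-2}) & = \E_K\!\left[\tfrac{1}{(n+2K-2)(n+2K-4)}\right].
\end{align*}
The identity $(a+k)^{-1}=\int_0^1 t^{a+k-1}\,dt$, summed against the Poisson weights, gives the closed form $\E_K[(K+a)^{-1}]=e^{-\beta}\int_0^1 t^{a-1}e^{\beta t}\,dt$ with $\beta=\xi/2$ (valid for $a>0$). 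Applied with $a=(n-2)/2$ and followed by $s=1-t$, this yields
\begin{equation*}
\E(W_{n,\xi}^{-1}) \;=\; \tfrac12\int_0^1 (1-s)^{(n-4)/2}\,e^{-\beta s}\,ds.
\end{equation*}

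For the second moment I would split by partial fractions, $\tfrac{1}{a(a+2)}=\tfrac{1}{2a}-\tfrac{1}{2(a+2)}$ with $a=n+2K-4$, reducing to two integrals of the same type; after the $s=1-t$ substitution the two collapse into
\begin{equation*}
\E(W_{n,\xi}^{-2}) \;=\; \tfrac14\int_0^1 s\,(1-s)^{(n-6)/2}\,e^{-\beta s}\,ds.
\end{equation*}

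Finally I would apply the elementary bound $(1-s)^\alpha\le e^{-\alpha s}$ (the hypotheses $n\ge 5$ and $n\ge 7$ give $\alpha\ge 1/2$ in the two cases) and evaluate the resulting Laplace integrals via $\int_0^1 e^{-cs}\,ds\le 1/c$ and $\int_0^\infty s\,e^{-cs}\,ds=1/c^2$. For the first integral this gives the bound $1/(n+\xi-4)$ directly. For the second it gives $1/(n+\xi-6)^2$, and one final algebraic comparison---$(n-6)(n+\xi-4)\le (n-4)(n+\xi-6)$, which simplifies to the obvious $2\xi\ge 0$---upgrades this to the stated bound $\tfrac{n-4}{(n-6)(n+\xi-4)(n+\xi-6)}$. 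The only place I anticipate any real care is in keeping the factors of $2$ consistent across the change of variables and partial fractions; everything else is routine.
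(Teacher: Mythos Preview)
Your argument is correct. The integral identities you state are exactly right: with $\beta=\xi/2$ one checks
\[
\E(W_{n,\xi}^{-1})=\tfrac12\int_0^1(1-s)^{(n-4)/2}e^{-\beta s}\,ds,\qquad
\E(W_{n,\xi}^{-2})=\tfrac14\int_0^1 s\,(1-s)^{(n-6)/2}e^{-\beta s}\,ds,
\]
and the inequality $(1-s)^\alpha\le e^{-\alpha s}$ (valid for all $\alpha\ge 0$ since $\log(1-s)\le -s$) together with $\int_0^1 e^{-cs}\,ds\le 1/c$ and $\int_0^\infty s\,e^{-cs}\,ds=1/c^2$ yields the two bounds after your final algebraic step $(n-6)(n+\xi-4)\le(n-4)(n+\xi-6)$.

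Your route is genuinely different from the paper's. The paper also starts from the Poisson mixture, but instead of passing to an integral it first uses Jensen to get the \emph{lower} bounds $\E(W_{n,\xi}^{-1})\ge 1/(n+\xi-2)$ and $\E(W_{n,\xi}^{-2})\ge 1/((n+\xi-2)(n+\xi-4))$, and then invokes the recurrence relations
\[
1=\xi\,\E(W_{n+4,\xi}^{-1})+n\,\E(W_{n+2,\xi}^{-1}),\qquad
\E(W_{n,\xi}^{-1})=\xi\,\E(W_{n+4,\xi}^{-2})+n\,\E(W_{n+2,\xi}^{-2}),
\]
borrowed from the literature, to convert those lower bounds into the desired upper bounds. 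Your approach is more self-contained (no external recurrence needed) and arguably cleaner; the paper's approach has the minor advantage of also producing the companion lower bounds along the way, and it shows transparently why the extra factor $(n-4)/(n-6)$ in the second inequality is natural rather than an artifact of the final algebraic comparison.
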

\begin{proof}
It is well known that the non-central chi-square random variable $W_{n,\xi}$ can be written as a Poisson-weighted mixture of central chi-square distributions, i.e., $W_{n,\xi}\sim\chi^2_{n+2K}$ with $K\sim\text{Poisson}(\xi/2)$. Then
\begin{align*}
\mathbb E\left(W_{n,\xi}^{-1}\right)&=\mathbb E\left(\mathbb E(W_{n,\xi}^{-1}\given K)\right)=\mathbb E\left(\frac{1}{n+2K-2}\right)\\
&\geq \frac{1}{n+2\mathbb E K-2}=\frac{1}{n+\xi-2}
\end{align*}
where we have used the fact that $\mathbb E(1/\chi^2_n)=n-2$ and Jensen's inequality. 
Similarly, we have
\begin{align*}
\mathbb E\left(W_{n,\xi}^{-2}\right)&=\mathbb E\left(\mathbb E(W_{n,\xi}^{-2}\given K)\right)=\mathbb E\left(\frac{1}{(n+2K-2)(n+2K-4)}\right)\\
&\geq \frac{1}{(n+2\mathbb E K-2)(n+2\mathbb E K-4)}\\
&=\frac{1}{(n+\xi-2)(n+\xi-4)}
\end{align*}
Using the Poisson-weighted mixture representation, the following recurrence relation can be derived \cite{chattamvelli1995recurrence}
\begin{align}
1&=\xi\mathbb E\left(W_{n+4,\xi}^{-1}\right)+n\mathbb E\left(W^{-1}_{n+2,\xi}\right),\label{eqn_recurrence1}\\
\mathbb E\left(W^{-1}_{n,\xi}\right)&=\xi\mathbb E\left(W_{n+4,\xi}^{-2}\right)+n\mathbb E\left(W^{-2}_{n+2,\xi}\right),\label{eqn_recurrence2}
\end{align}
for $n\geq 3$. Thus,
\begin{align*}
\mathbb E\left(W_{n+4,\xi}^{-1}\right)&=\frac{1}{\xi}-\frac{n}{\xi}\mathbb E\left(W^{-1}_{n+2,\xi}\right)\\
&\leq \frac{1}{\xi}-\frac{n}{\xi}\frac{1}{n+\xi}\\
&=\frac{1}{n+\xi}.
\end{align*}
Replacing $n$ by $n-4$ proves \eqref{eqn_inversechisq1}. On the other hand, rearranging \eqref{eqn_recurrence1}, we get
\begin{align*}
\mathbb E\left(W^{-1}_{n+2,\xi}\right)&=\frac{1}{n}-\frac{\xi}{n}\mathbb E\left(W^{-1}_{n+4,\xi}\right)\\
&\leq \frac{1}{n}-\frac{\xi}{n}\frac{1}{n+\xi+2}\\
&=\frac{n+2}{n(n+\xi+2)}.
\end{align*}
Now using \eqref{eqn_recurrence2}, we have
\begin{align*}
\mathbb E\left(W_{n+4,\xi}^{-2}\right)&=\frac{1}{\xi}\mathbb E\left(W^{-1}_{n,\xi}\right)-\frac{n}{\xi}\mathbb E\left(W^{-2}_{n+2,\xi}\right)\\
&\leq\frac{n}{\xi(n-2)(n+\xi)}-\frac{n}{\xi(n+\xi)(n+\xi-2)}\\
&=\frac{n}{(n-2)(n+\xi)(n+\xi-2)}.
\end{align*}
Replacing $n$ by $n-4$ proves \eqref{eqn_inversechisq2}.
\end{proof}

\end{appendix}

\end{document}